\newtheorem{lemma}{Lemma}
\newtheorem{theorem}{Theorem}
\newtheorem{proposition}{Proposition}
\newtheorem{corollary}{Corollary}
\newtheorem{remark}{\indent {Remark}}
\newtheorem{example}{Example}
\numberwithin{theorem}{section} % ï¿œÃ¶ï¿œï¿œï¿œï¿œï¿œï¿œÂœï¿œï¿œï¿œ
\numberwithin{equation}{section}
\numberwithin{lemma}{section}
\numberwithin{example}{section}
\numberwithin{definition}{section}
\newcommand{\cred}[1]{{\color{black}  #1}}
\journal{Journal}
\begin{document}
\begin{frontmatter}

\title{Multilevel Tau preconditioners for symmetrized multilevel Toeplitz systems with applications to solving space fractional diffusion equations}

\author[mymainaddress]{Congcong Li}\ead{22482245@life.hkbu.edu.hk}
\author[mymainaddress]{Sean Hon\corref{cor1}}\ead{seanyshon@hkbu.edu.hk}
\cortext[cor1]{Corresponding author.}

\address[mymainaddress]{Department of Mathematics, Hong Kong Baptist University, Kowloon Tong, Hong Kong SAR}

\begin{abstract}
 \cred{In this work, we develop a novel multilevel Tau matrix-based preconditioned method for a class of non-symmetric multilevel Toeplitz systems. This method not only accounts for but also improves upon an ideal preconditioner pioneered by [J. Pestana. Preconditioners for symmetrized Toeplitz and multilevel Toeplitz matrices. SIAM Journal on Matrix Analysis and Applications, 40(3):870-887, 2019]. The ideal preconditioning approach was primarily examined numerically in that study, and an effective implementation was not included. To address these issues, we first rigorously show in this study that this ideal preconditioner can indeed achieve optimal convergence when employing the minimal residual (MINRES) method, with a convergence rate is that independent of the mesh size. Then, building on this preconditioner, we develop a practical and optimal preconditioned MINRES method. To further illustrate its applicability and develop a fast implementation strategy, we consider solving Riemann-Liouville fractional diffusion equations as an application. Specifically, following standard discretization on the equation, the resultant linear system is a non-symmetric multilevel Toeplitz system, affirming the applicability of our preconditioning method. Through a simple symmetrization strategy, we transform the original linear system into a symmetric multilevel Hankel system. Subsequently, we propose a symmetric positive definite multilevel Tau preconditioner for the symmetrized system, which can be efficiently implemented using discrete sine transforms. Theoretically, we demonstrate that mesh-independent convergence can be achieved. In particular, we prove that the eigenvalues of the preconditioned matrix are bounded within disjoint intervals containing $\pm 1$, without any outliers. Numerical examples are provided to critically discuss the results, showcase the spectral distribution, and support the efficacy of our preconditioning strategy.}
\end{abstract}

\begin{keyword}
%% keywords here, in the form: keyword \sep keyword
%% PACS codes here, in the form: \PACS code \sep code
%% MSC codes here, in the form: \MSC code \sep code
%% or \MSC[2008] code \sep code (2000 is the default)
Tau preconditioners \sep Toeplitz matrices \sep multilevel Toeplitz matrices \sep MINRES \sep preconditioning
\end{keyword}
\end{frontmatter}

\section{Introduction}
In recent years, there has been growing interest in developing effective preconditioners for symmetrized Toeplitz systems. The underlying idea stems from \cite{doi:10.1137/140974213}, in which the original (real) nonsymmetric Toeplitz matrix $T_m \in \mathbb{R}^{m \times m}$ is symmetrized by premultiplying it with the anti-identity matrix
\begin{equation}
Y_m = {\begin{bmatrix}{}
& & & 1  \\
& & 1 &   \\
& \iddots & &\\
1 & & &
\end{bmatrix}} \in \mathbb{R}^{m \times m},
\end{equation} i.e., $[Y_m]_{j,k}=1$ if and only if $j+k=m+1$ and $[Y_m]_{j,k}=0$ elsewhere. For the now symmetric system with $Y_m T_m$, the minimal residual (MINRES) method can be used. Its convergence behaviour is related only to the eigenvalues and therefore effective preconditioners can be constructed exploiting the spectral information. 

While the generalized minimal residual (GMRES) method is a common preconditioned solver applied to the original nonsymmetric Toeplitz system with $T_m$, it has a significant drawback: its preconditioning is largely heuristic, as discussed in \cite{ANU:9672992}. Besides, in general, the convergence behaviors of GMRES cannot be rigorously analyzed solely through the knowledge of the eigenvalues, according to \cite{Greenbaum_1996}.

For the symmetrized Toeplitz matrix $Y_mT_m$, various preconditioning techniques have been proposed. Absolute value circulant preconditioners were first proposed in \cite{doi:10.1137/140974213}, with the preconditioned matrix eigenvalues shown to cluster around $\pm 1$ under certain conditions. Additionally, Toeplitz and band-Toeplitz preconditioners have been proposed in \cite{Pestana2019} and \cite{Hon_SC_Wathen}, respectively.

The same symmetrization preconditioning approach has also been employed in solving evolutionary differential equations. Initially introduced for solving ordinary and partial differential equations (PDEs) in \cite{McDonald2017} and \cite{doi:10.1137/16M1062016}, respectively, this approach has since been extended to a number of PDE problems, as discussed in \cite{HonFungDongSC_2023} and \cite{hondongSC2023}, for example.

\cred{Notably, this symmetrization approach was further generalized to the multilevel Toeplitz case in \cite{Pestana2019}. The author proposed a multilevel Toeplitz preconditioner as an ideal preconditioner for a class of symmetrized multilevel Toeplitz systems. Although this approach was shown to be effective in some cases, as demonstrated numerically in the same work, its overall preconditioning effect and the issues related to fast implementation were not fully explored.

Motivated by the need to address these issues, in this work, we first rigorously demonstrate that this ideal preconditioner can indeed achieve optimal convergence when employing the MINRES method, with a convergence rate that is independent of the mesh size. Then, to further illustrate its applicability and develop a fast implementation strategy, we consider solving Riemann-Liouville (R.-L.) fractional diffusion equations as an application. The findings demonstrate that Tau preconditioning constitutes an optimal strategy for symmetrized multilevel Toeplitz systems that emerge from the discretization of fractional diffusion equations.

It should be noted that, although some commonly used preconditioners, {such as the aforementioned circulant and Toeplitz preconditioners}, have been proposed for symmetrized multilevel Toeplitz systems, the performance of Tau preconditioners remains unexplored in this context. This is noteworthy considering their demonstrated effectiveness in symmetric Toeplitz systems, as studied in several works (see, for example, \cite{D1,DS,Slinear}).

As a model application, we consider the following space R.-L. fractional diffusion equation}
\begin{equation}\label{eq:fde}
\left\{ \begin{array}{ll}
    \frac{\partial u(x,t)}{\partial t}-\sum_{i=1}^{d}\Big( d_{i,+}\frac{\partial_+^{\alpha_i}}{\partial x^{\alpha_i}} + d_{i,-}\frac{\partial_-^{\alpha_i}}{\partial x^{\alpha_i}}\Big) u(x{,t}) = f(x{,t}), &x\in\Omega, {t\in (0,T],}\\
    u(x{,t}) = 0, &x \in \partial\Omega, \\    {u(x,0) = u_0(x),} &{x\in\Omega,}
    \end{array} \right.\,
\end{equation}
where $\Omega=\prod_{i=1}^{d}(a_i,b_i)$ is an open hyper-rectangle, $\partial\Omega$ denotes the boundary of $\Omega$, $\alpha_i \in (1,2)$ are the fractional derivative orders, $f(x{, t})$ is the source term, $u_0(x)$ is a given function, and the diffusion coefficients $d_{i,\pm}$ are nonnegative constants. The left- and the right-sided R.-L. fractional derivatives in \eqref{eq:fde} are defined by
\begin{equation*}
\begin{split}
\frac{\partial_+^{\alpha_i} u(x{,t})}{\partial x^{\alpha_i}}&=\frac{1}{\Gamma(2-{\alpha_i})}\frac{\partial^{2}}{\partial x_{i}^{2}}\int_{a_i}^{x_i}\frac{u(x_{1},x_{2},\dots,x_{i-1},\xi,x_{i+1},\dots,x_{d}{,t})}{(x_{i}-\xi)^{\alpha_{i}-1}}{\rm d}\xi, \\
\frac{\partial_-^{\alpha_i} u(x{,t})}{\partial x^{\alpha_i}}&=\frac{1}{\Gamma(2-{\alpha_i})}\frac{\partial^{2}}{\partial x_{i}^{2}}\int_{x_i}^{b_i}\frac{u(x_{1},x_{2},\dots,x_{i-1},\xi,x_{i+1},\dots,x_{d}{,t})}{(\xi-x_i)^{\alpha_i-1}}{\rm d}\xi,
\label{right-LR}
\end{split}
\end{equation*}
respectively, where ${\Gamma(\cdot)}$ denotes the gamma function.

Denote the set of all positive integers and the set of all nonnegative integers by $\mathbb{N}^{+}$ and $\mathbb{N}$, respectively. For any $m, k \in \mathbb{N}$ with $m \leq k$, define the set $m \wedge k:=$ $\{m, m+1, \ldots, k-1, k\}$. For $n_i \in \mathbb{N}^{+}, i \in 1 \wedge d$, denote by $h_i=\left(b_i-a_i\right) /\left(n_i+1\right)$, the stepsize along the $i$-th direction. Denote
	$$
	n=\prod_{i=1}^d n_i, \quad n_1^{-}=n_d^{+}=1, \quad n_i^{-}=\prod_{j=1}^{i-1} n_j,~i \in 2 \wedge d, \quad n_k^{+}=\prod_{j=k+1}^d n_j,~ k \in 1 \wedge(d-1) .
	$$

 \cred{The Crank–Nicolson} method is applied for the temporal derivative, and the \cred{weighted and shifted} Gr\"unwald scheme \cred{\cite{Tian2015,vong2019second}} is used for the space fractional derivatives. \cred{The time interval $[0,T]$ is partitioned into $M$ sub-intervals, each with a time step size of $\tau := T/M$.} This results in a scheme that is \cred{second-order} accurate in both time and space, leading to the linear system
\begin{equation}\label{eqn:Au_second}
		% A_n \mathbf{u}^k = \nu \mathbf{u}^{k-1}  + \mathbf{f}^k,
  		\cred{\underbrace{(\nu I_n + B_n)}_{=:A_n} \underbrace{ \mathbf{u}^{k+1} }_{ =:\mathbf{x}} = \underbrace{(\nu I_n - B_n) \mathbf{u}^{k}  + \mathbf{f}^{k+\frac{1}{2}}}_{=:\mathbf{b}},}
\end{equation}
	where $\nu=\frac{1}{\tau}$ and $I_m$ represents an $m \times m$ identity matrix. The vector \cred{$\mathbf{f}^{k+\frac{1}{2}}$} is known from the numerical scheme used and denotes the sampling of $f$ at the grid points. The matrix $A_{n}$ is a nonsymmetric multilevel Toeplitz matrix. More precisely, the coefficient matrix $A_{n}$ can be expressed as
 \cred{
	\begin{eqnarray}\label{eqn:main_matrix_second}
	A_n &=& \nu I_n + B_n = \nu I_n  + \sum_{i=1}^d\left(v_{i,+} W_i+v_{i,-} W_i^{\top}\right),\\\nonumber
 &&\quad W_i= I_{n_i^{-}} \otimes L_{n_i}^{\left(\alpha_i\right)} \otimes {I}_{n_i^{+}}, \quad v_{i,+}:=\frac{d_{i,+}}{2h_i^{\alpha_i}}, \quad v_{i,-}:=\frac{d_{i,-}}{2h_i^{\alpha_i}},
	\end{eqnarray}
}in which  ` $\otimes$ ' denotes the Kronecker product. The matrix $L_{n_i}^{(\alpha_i)}$ is defined as
	\begin{equation}\label{eq:grunwaldmatrix_second}
% L_{n_i}^{(\alpha_i)}=
% -\begin{bmatrix}
% g_1^{(\alpha_i)} & g_0^{(\alpha_i)} & 0 & \cdots & 0 \\
% 			g_2^{(\alpha_i)} & g_1^{(\alpha_i)} & g_0^{(\alpha_i)} & \ddots & \vdots \\
% 			\vdots & g_2^{(\alpha_i)} & g_1^{(\alpha_i)} & \ddots & 0 \\
% 			\vdots & \ddots & \ddots & \ddots & g_0^{(\alpha_i)} \\
% 			g_m^{(\alpha_i)} & \cdots & \cdots & g_2^{(\alpha_i)} & g_1^{(\alpha_i)}\\
% \end{bmatrix} \in \mathbb{R}^{n_i \times n_i},
\cred{
L_{n_i}^{(\alpha_i)}=
-\begin{bmatrix}
w_1^{(\alpha_i)} & w_0^{(\alpha_i)} & 0 & \cdots & 0 \\
			w_2^{(\alpha_i)} & w_1^{(\alpha_i)} & w_0^{(\alpha_i)} & \ddots & \vdots \\
			\vdots & w_2^{(\alpha_i)} & w_1^{(\alpha_i)} & \ddots & 0 \\
			\vdots & \ddots & \ddots & \ddots & w_0^{(\alpha_i)} \\
			w_{n_i}^{(\alpha_i)} & \cdots & \cdots & w_2^{(\alpha_i)} & w_1^{(\alpha_i)}\\
\end{bmatrix} \in \mathbb{R}^{n_i \times n_i},
}
\end{equation} \cred{where the coefficients $w_0^{(\alpha_i)}=\frac{\alpha_i}{2}g_0^{(\alpha_i)}, w_k^{(\alpha_i)}=\frac{\alpha_i}{2}g_k^{(\alpha_i)} + \frac{2-\alpha_i}{2}g_{k-1}^{(\alpha_i)}$ for $k \geq 1$. Note that the coefficients $g_k^{(\alpha_i)}$ are given by
\begin{equation}
    g_0^{(\alpha_i)}=1, \quad g_k^{(\alpha_i)}=\left( 1 -\frac{\alpha_i +1}{ k} \right) g_{k-1}^{(\alpha_i)}
\end{equation}
for $k \geq 1$.}

From \cite{vong2019second} and following the notation to be introduced in Section \ref{sec:prelim}, we know that $L_{n_i}^{(\alpha_i)}=T_{n_i}[g_{\alpha_i}]$ is a Toeplitz matrix generated by the complex-valued function

\cred{
\begin{align}\label{eqn:gen_function_second}
    g_{\alpha_i}(\theta)&=-\sum_{k=-1}^{\infty} w_{k+1}^{({\alpha_i})} \mathrm{e}^{\mathrm{i} k \theta}=-\left[ \frac{\alpha_i}{2}\mathrm{e}^{-\mathbf{i} \theta} \left(1-\mathrm{e}^{\mathbf{i} \theta}\right)^{\alpha_i}+ \frac{2-\alpha_i}{2}  \left(1-\mathrm{e}^{\mathbf{i} \theta}\right)^{\alpha_i}  \right].
\end{align}
}
Thus, the generating function of $A_n$ is associated by
\begin{equation}\label{eqn:main_gen_function_second}
    f_{\boldsymbol{\alpha}}(\boldsymbol{\theta}) = \nu + \sum_{i=1}^d  v_{i,+}g_{\alpha_i}(\theta_i) + v_{i,-}g_{\alpha_i}(-\theta_i),
\end{equation}
where $\boldsymbol{\alpha} = (\alpha_1, \dots, \alpha_d)$.

\cred{ 
Alternatively, if a backward Euler method is applied for the temporal derivative, and the shifted Gr\"unwald scheme \cite{Meerschaert2004,Meerschaert2006} is used for the space fractional derivatives. This result in a scheme that is first-order accurate in both time and space, leading to the linear system
\begin{equation}\label{eqn:Au_first}
		\widetilde{A}_n \underbrace{\mathbf{u}^k}_{=:{\mathbf{x}}} = \underbrace{\nu \mathbf{u}^{k-1}  + \mathbf{f}^k}_{=:{\mathbf{b}}}.
\end{equation}
	In this case, the nonsymmetric multilevel Toeplitz matrix $\widetilde{A}_{n}$ can be expressed as
	\begin{eqnarray}\label{eqn:main_matrix_first}
	\widetilde{A}_n = \nu I_n+ \sum_{i=1}^d\left( \widetilde{v}_{i,+} \widetilde{W}_i+ \widetilde{v}_{i,-} \widetilde{W}_i^{\top}\right), \quad \widetilde{W}_i= I_{n_i^{-}} \otimes \widetilde{L}_{n_i}^{\left(\alpha_i\right)} \otimes {I}_{n_i^{+}},  \\ \nonumber
 \widetilde{v}_{i,+}:=\frac{d_{i,+}}{h_i^{\alpha_i}}, \quad \widetilde{v}_{i,-}:=\frac{d_{i,-}}{h_i^{\alpha_i}}.
	\end{eqnarray}
 The matrix $\widetilde{L}_{n_i}^{(\alpha_i)}$ is defined as
	\begin{equation}\label{eq:grunwaldmatrix_first}
\widetilde{L}_{n_i}^{(\alpha_i)}=
-\begin{bmatrix}
\widetilde{g}_1^{(\alpha_i)} & \widetilde{g}_0^{(\alpha_i)} & 0 & \cdots & 0 \\
			\widetilde{g}_2^{(\alpha_i)} & \widetilde{g}_1^{(\alpha_i)} & \widetilde{g}_0^{(\alpha_i)} & \ddots & \vdots \\
			\vdots & \widetilde{g}_2^{(\alpha_i)} & \widetilde{g}_1^{(\alpha_i)} & \ddots & 0 \\
			\vdots & \ddots & \ddots & \ddots & \widetilde{g}_0^{(\alpha_i)} \\
			\widetilde{g}_m^{(\alpha_i)} & \cdots & \cdots & \widetilde{g}_2^{(\alpha_i)} & \widetilde{g}_1^{(\alpha_i)}\\
\end{bmatrix} \in \mathbb{R}^{n_i \times n_i},
\end{equation} where the coefficients $\widetilde{g}_k^{(\alpha_i)}$ given by $\widetilde{g}_k^{(\alpha_i)}=(-1)^k\binom{\alpha_i}{k}=\frac{(-1)^k}{k!}\alpha_i(\alpha_i-1)\cdots(\alpha_i-k+1), ~k\geq 0,$ where $\binom{\alpha_i}{0}=1$. 

From \cite{donatelli161}, we know that $\widetilde{L}_{n_i}^{(\alpha_i)}=T_{n_i}[\widetilde{g}_{\alpha_i}]$ is a Toeplitz matrix generated by the complex-valued function $$\widetilde{g}_{\alpha_i}(\theta)=-e^{-\mathbf{i}\theta}\left(1-e^{\mathbf{i}\theta}\right)^{\alpha_i}.$$
Thus, the generating function of $\widetilde{A}_n$ is associated by
{
\begin{equation}\label{eqn:main_gen_function_first}
    \widetilde{f}_{\boldsymbol{\alpha}}(\boldsymbol{\theta}) = \nu + \sum_{i=1}^d   \widetilde{v}_{i,+}\widetilde{g}_{\alpha_i}(\theta_i) +  \widetilde{v}_{i,-}\widetilde{g}_{\alpha_i}(-\theta_i).
\end{equation}}
}
Instead of solving \cred{\eqref{eqn:Au_second} (or \eqref{eqn:Au_first})} directly, we employ the MINRES method to solve the following equivalent system denoted by
\cred{
\begin{equation}\label{eqn:preconditioned}
    % P_n^{-1}  Y_n A_n \underbrace{ \mathbf{u}^k }_{ =:\mathbf{x}}= P_n^{-1}Y_n \underbrace{ (\nu \mathbf{u}^{k-1}  + \mathbf{f}^k) }_{=:\mathbf{b}}.
    Y_n A_n \mathbf{x}=Y_n \mathbf{b} \quad (\textrm{or} \quad Y_n \widetilde{A}_n {\mathbf{x}}=Y_n {\mathbf{b}}).
\end{equation}
}
In an endeavor to expedite the convergence of the MINRES algorithm applied to the symmetrized system $Y_n A_n \mathbf{x} = Y_n \mathbf{b}$ \cred{(or $Y_n \widetilde{A}_n {\mathbf{x}}=Y_n {\mathbf{b}})$}, the absolute value circulant preconditioner proposed in \cite{doi:10.1137/140974213} was considered as a potential solution. However, as demonstrated in the numerical experiments of \cite{Pestana2019}, its effectiveness has been shown to be unsatisfactory. Furthermore, a comprehensive analysis was provided in \cite{Hon_SC_Wathen} showing that circulant preconditioners do not generally ensure rapid convergence. This is particularly true for ill-conditioned Toeplitz systems, where the preconditioned matrix may exhibit eigenvalue outliers that are very close to zero. Moreover, circulant preconditioners are known to be suboptimal in the context of multilevel Toeplitz systems, as discussed in \cite{SeTy-nega1, SeTy-nega2}. As an alternative, \cite{Hon_SC_Wathen} proposed a band-Toeplitz preconditioner for symmetrized Toeplitz systems, specifically when the generating functions exhibit zeros of even order. Yet, this approach is not applicable for the matrix $Y_n A_n$ considered in this work. This is because the function $f_{\boldsymbol{\alpha}}$ associated with $Y_n A_n$ has a zero of fractional order $\boldsymbol{\alpha}$ as discussed in \cite{donatelli161}, a scenario where the band-Toeplitz preconditioner does not apply.

Recently, the use of the symmetric part of the multilevel Toeplitz coefficient matrix as a preconditioner for the target nonsymmetric multilevel Toeplitz systems was proposed in \cite{Pestana2019}. While this ideal \cred{symmetric part} preconditioner can facilitate fast MINRES convergence, the computationally expensive nature of its implementation limits practical application. Moreover, the effectiveness of the preconditioner has been observed numerically but has not yet been theoretically proven.

In this context, the main contributions of our work are twofold:
\begin{enumerate}
    \item We show that the ideal preconditioner proposed in \cite{Pestana2019} can indeed lead to a MINRES convergence rate that is independent of the mesh size for \cred{a class of symmetrized multilevel Toeplitz systems. Moreover, building on this preconditioner, we develop a practical and optimal preconditioned MINRES method for these symmetrized systems (see Corollary \ref{cor:practical_MINRES_convergence}).}
    
    \item \cred{To illustrate our preconditioning approach, we propose a multilevel Tau preconditioner as an practical preconditioner} for $Y_n A_n \mathbf{x} = Y_n \mathbf{b}$ \cred{(or $Y_n \widetilde{A}_n {\mathbf{x}}=Y_n {\mathbf{b}})$}. This preconditioner balances the effectiveness of preconditioning with computational feasibility. Our approach offers a practical strategy for preconditioning fractional diffusion equations and achieves mesh-independent MINRES convergence. To the best of our knowledge, this study is the first to show that Tau preconditioning is an optimal choice for \cred{a range of} symmetrized multilevel Toeplitz systems stemming from space R.-L. fractional diffusion equations, \cred{where both first and second order schemes are considered.}
\end{enumerate}

The paper is organized as follows: Our proposed preconditioner is defined in Section \ref{sec:Tau}. Section \ref{sec:prelim} reviews some preliminary results on multilevel Toeplitz matrices. Section \ref{sec:main} presents our main results on the effectiveness of our proposed preconditioners. Numerical examples in Section \ref{sec:numerical} demonstrate the expected performance of our proposed preconditioners, and support the theoretical results concerning the eigenvalues of the associated preconditioned matrices.

\section{\cred{\textbf{{Proposed preconditioners}}}}\label{sec:Tau}
	In this section, our multilevel Tau preconditioner $P_n$ for \eqref{eqn:preconditioned} is presented.
	
	For a symmetric Toeplitz matrix $T_m\in\mathbb{R}^{m\times m}$ with $(t_1,t_2,...,t_m)^{\top}\in\mathbb{R}^{m}$, define its $\tau$-matrix \cite{Bini1990} approximation as
	\begin{equation}\label{tauopdef}
		\tau(T_m):=T_m-H_m,
	\end{equation}
	where $H_m$ is the Hankel matrix with $(t_3,t_4,...,t_m,0,0)^{\top}$ as its first column and $(0,0,t_m,...,t_4,t_3)^{\top}$ as its last column. A crucial property of the Tau matrix defined in \eqref{tauopdef}  is that it is diagonalizable by sine transform matrix, i.e.,
	\begin{equation}\label{taumatdiag}
		\tau(T_m)=S_m Q_m S_m,
	\end{equation}
	where $Q_m=[{\rm diag}(q_i)]_{i=1}^{m}$ is a diagonal matrix with
	\begin{equation}\label{sigmicomp}
		q_{i}=t_1+2\sum\limits_{j=2}^{m}t_j\cos\left(\frac{\pi i(j-1)}{m+1}\right),\quad  i\in 1\wedge m.
	\end{equation} 
	\begin{equation*}%\label{sinematdef}
		S_m:= \left[\sqrt{\frac{2}{m+1}}\sin\left(\frac{\pi jk}{m+1}\right)\right]_{j,k=1}^{m}
	\end{equation*}
	is a sine transform matrix. It is easy to verify that $S_m$ is a symmetric orthogonal matrix, i.e., $S_m=S_m^{\top}=S_m^{-1}$. The product of matrix $S_m$ and a given vector of length $m$ can be fast computed in $\mathcal{O}(m\log m)$ operations using discrete sine transform (DSTs) \cite{BC83}. Let ${\bf e}_{m,i}\in\mathbb{R}^{m}$ denotes the $i$-th column of the $m\times m$ identity matrix.
	We also note that the $m$ numbers $\{q_i\}_{i=1}^{m}$ defined in \eqref{sigmicomp} can be computed by
	\begin{equation*}
		(q_1, q_2, \cdots, q_m)^{\top}={\rm diag}(S_m{\bf e}_{m,1})^{-1}[S_m\tau(T_m){\bf e}_{m,1}].
	\end{equation*}
	From the equation above, we know that the computation of $\{q_i\}_{i=1}^{m}$ requires only $\mathcal{O}(m\log m)$ operations.
	
	For a real square matrix $Z$, denote the symmetric part of $Z$ as
	$$
	\mathcal{H}(Z):=\frac{Z+Z^{\top}}{2}.
	$$
	Then, our multilevel Tau preconditioner $P_n$ for $Y_nA_n$ in \eqref{eqn:preconditioned} is defined as follows
	\begin{equation}\label{eqn:main_krecondition_second}
		P_n:= \nu I_{n} + \sum_{i=1}^d\left(v_{i,+}+v_{i,-}\right) I_{n_i^{-}} \otimes \tau\left(	\mathcal{H}\left(L_{n_i}^{\left(\alpha_i\right)} \right)\right) \otimes I_{n_i^{+}}.
		\end{equation}
	\cred{
 Similarly, in the first-order case, a multilevel Tau preconditioner $\widetilde{P}_n$ for $Y_n\widetilde{A}_n$ in \eqref{eqn:preconditioned} can be defined as follows
{	\begin{equation}\label{eqn:main_krecondition_first}
		\widetilde{P}_n:= \nu I_{n} + \sum_{i=1}^d\left( \widetilde{v}_{i,+}+ \widetilde{v}_{i,-}\right) I_{n_i^{-}} \otimes \tau\left(	\mathcal{H}\left(\widetilde{L}_{n_i}^{\left(\alpha_i\right)} \right)\right) \otimes I_{n_i^{+}}.
		\end{equation}
}
 \begin{remark}\label{remark:first_order_precon}
We remark that in the steady case, $\widetilde{P}_n$ coincides with the preconditioner proposed in \cite{Lin_etc_2023}, with the only difference being the addition of the matrix $\nu I_n$. A related multilevel Tau preconditioner was also mentioned in \cite{Huang_2022} for Riesz fractional diffusion equations using a first-order scheme. However, our approach and convergence analysis differ significantly as we employ the MINRES method, whereas the cited methods utilize GMRES. Moreover, regarding implementation, the preconditioner proposed in \cite{Lin_etc_2023} is used in a two-sided manner, where GMRES operates on $\widetilde{P}_n^{-1/2} \widetilde{A}_n\widetilde{P}_n^{-1/2}\tilde{{\mathbf{x}}}=\widetilde{P}_n^{-1/2}{\mathbf{b}}$, where $\tilde{{\mathbf{x}}}=\widetilde{P}_n^{1/2} \mathbf{x}$. In contrast, our implementation is one-sided, with MINRES applied to $\widetilde{P}_n^{-1} Y_n \widetilde{A}_n {\mathbf{x}} =\widetilde{P}_n^{-1}Y_n{\mathbf{b}}$, as is typically done.
\end{remark}
 
 }

	From \eqref{taumatdiag} \& \eqref{sigmicomp} and properties of the one-dimensional sine transform matrix $S_m$, we know that $P_n$ is diagonalizable by a $d$-dimension sine transform matrix, namely,
	\begin{align*}
			P_n=S \Lambda S,\label{btaudiag} \quad S:=S_{n_1}\otimes \dots \otimes S_{n_d},\quad \Lambda:= \nu I_{n} + \sum\limits_{i=1}^{d}\left(v_{i,+}+v_{i,-}\right)I_{n_i^{-}}\otimes \Lambda_i\otimes I_{n_i^{+}},
	\end{align*}
 where $\Lambda_i$ contains the eigenvalues of $\tau\left(	\mathcal{H}\left(L_{n_i}^{\left(\alpha_i\right)} \right)\right)$. Thus, the product of $P_n$ and a given vector can be efficiently computed in $\mathcal{O}(n\log n)$ operations using DSTs.
	
 In Section \ref{sec:main}, we will show that the eigenvalues of the preconditioned matrix \( P_n^{-1}Y_n A_n \) \cred{(or \( \widetilde{P}_n^{-1}Y_n \widetilde{A}_n \))} are contained in a disjoint interval enclosing \(\pm 1\), which leads to theoretically guaranteed matrix size/mesh-independent convergence when MINRES is applied.

\section{Preliminaries on multilevel Toeplitz matrices}
\label{sec:prelim}

In this section, we provide some useful background knowledge regarding multilevel Toeplitz and Hankel matrices.

Now consider the Banach space $L^1([-\pi,\pi]^k)$ of all complex-valued Lebesgue integrable functions over $[-\pi,\pi]^k$, equipped with the norm
\[
\|f\|_{L^1} = \frac{1}{(2\pi)^k}\int_{[-\pi,\pi]^k} |f({\boldsymbol{\theta}})|\,{\rm d} {\theta} < \infty,
\]
where ${\rm d} {\theta}={\rm d} {\theta_1}\cdots{\rm d} {\theta_k}$ denotes the volume element with respect to the $k$-dimensional Lebesgue measure.
%where $\|A\|_{\text{tr}}:=\sum_{j=1}^{N}\sigma_{j}(A)$ denotes the trace norm of $A\in \mathbb{C}^{N \times N}$.

Let $f:$~$[-\pi,\pi]^k\to \mathbb{C}$ be a function belonging to $L^1([-\pi,\pi]^k)$ and periodically extended to $\mathbb{R}^k$. The multilevel Toeplitz matrix $T_{{n}}[f] $ of size $n\times n$ with $n= n_1n_2 \dots n_k$ is defined as
\begin{equation*}
T_{{n}}[f] =\sum_{|j_1|<n_1}\ldots \sum_{|j_k|<n_k} J_{n_1}^{j_1} \otimes \cdots\otimes J_{n_k}^{j_k} a_{(\mathbf{j})}, \qquad \mathbf{j}=(j_1,j_2,\dots,j_k)\in \mathbb{Z}^k,
\end{equation*}
where
\[ a_{(\mathbf{j})}=a_{(j_1,\dots, j_k)}=\frac{1}{(2\pi)^k}\int_{[-\pi,\pi]^k}f({\boldsymbol{\theta}}){\rm e}^{\mathbf{i}\left\langle { \bf j},{\boldsymbol{\theta}}\right\rangle}\, {\rm d}{\theta},
\quad \left\langle { \bf j},{\boldsymbol{\theta}}\right\rangle=\sum_{t=1}^kj_t\theta_t, \quad \mathbf{i}^2=-1, \]
are the Fourier coefficients of $f$ and $J^{j}_{m}$ is the $m \times m$ matrix whose $(l,h)$-th entry equals 1 if $(l-h)=j$ and $0$ otherwise. The function $f$ is called the generating function of $T_{{n}}[f]$.

It is easy to prove that (see e.g., \cite{MR2108963,MR2376196,Chan:1996:CGM:240441.240445,GaroniCapizzano_two}) if $f$ is real-valued, then $T_{{n}}[f]$ is Hermitian; if $f$ is real-valued and nonnegative, but not identically zero almost everywhere, then $T_{{n}}[f]$ is Hermitian positive definite; if $f$ is real-valued and even, $T_{{n}}[f]$ is (real) symmetric.

Throughout this work, we assume that $f\in L^1([-\pi,\pi]^k)$ and is periodically extended to $\mathbb{R}^k$.

Similar to a multilevel Toeplitz matrix, we can define a multilevel Hankel matrix as 
\begin{equation*}
H_n [f] = \sum_{j_1 = 1}^{2n_1-1}\dots\sum_{j_k = 1}^{2n_k-1}K_{n_1}^{(j_1)}\otimes \dots \otimes K_{n_k}^{(j_k)} {a}_{(j_1,\dotsc,j_k)},
\end{equation*}
where $K^{(k)}_r \in \mathbb{R}^{r\times r}$ is the matrix whose $(i,j)$-th entry is one if $i+j=k+1$ and is zero otherwise. Clearly, a multilevel Hankel matrix is symmetric. 

Multilevel Toeplitz matrices can be symmetrized by the permutation matrix $Y_n \in \mathbb{R}^{n \times n}$, namely, $Y_{n} = Y_{n_1}\otimes \dots \otimes Y_{n_k}$. Knowing that 
$Y_r J_r^{(k)} = K_r^{(r-k)}$, we can easily show that 
 \begin{align*}
Y_n T_n[f]
=& \sum_{j_1 = -n_1+1}^{n_1-1}\dots\sum_{j_k = -n_k+1}^{n_k-1}\left((Y_{n_1}J_{n_1}^{(j_1)})\otimes \dots \otimes (Y_{n_k}J_{n_k}^{(j_k)})\right) {a}_{(j_1,\dotsc,j_k)}\\
=& \sum_{j_1 = -n_1+1}^{n_1-1}\dots\sum_{j_k = -n_k+1}^{n_k-1}\left( K_{n_1}^{(n_1-j_1)}\otimes \dots \otimes K_{n_k}^{(n_k-j_k)}\right){a}_{(j_1,\dotsc,j_k)})\\
=& \sum_{j_1 = 1}^{2n_1-1}\dots\sum_{j_k = 1}^{2n_k-1}K_{n_1}^{(j_1)}\otimes \dots \otimes K_{n_k}^{(j_k)} {{b}}_{(j_1,\dotsc,j_k)},
\end{align*}
where  $b_{(j_1,\dotsc,j_k)} = a_{(n_1-j_1,\dotsc,n_k-j_k)}$. Hence, $Y_n T_n[f]$  is a symmetric multilevel Hankel matrix. For more properties regarding multilevel Hankel matrices, see \cite{FaTi00_2000}.

A crucial aspect of developing effective preconditioners for $Y_nT_{n}[f]$ is understanding its asymptotic spectral distribution associated with $f$. This was established for the uni-level case in \cite{MazzaPestana2018, Ferrari2019, Hon_M_SC_2019} and later generalized to the multilevel case in \cite{MazzaPestana2021, Ferrari2021}.

\section{Main results}\label{sec:main}

\cred{

The main results of this work are divided into the following subsections.

\subsection{Convergence analysis of the ideal preconditioner for symmetrized multilevel Toeplitz systems with $Y_n T_{n}$ }

In this subsection, we provide a result as a straightforward consequence of the following lemma, which show that the ideal preconditioner $\mathcal{H}(  T_{n})$ can achieve optimal convergence for a class of symmetrized multilevel Toeplitz systems with $Y_n T_{n}$.

\begin{lemma}\cite[Theorem 4.2]{Pestana2019}\label{lemma:pestana} 
Let $f \in L^1([-\pi, \pi]^d)$ and let $f=\mathrm{Re}(f)+ \mathbf{i} \mathrm{Im}(f)$, where $\mathrm{Re}(f)$ and $\mathrm{Im}(f)$ are real-valued functions with $\mathrm{Re}(f)$ essentially positive. Additionally, let $T_{{n}}[f] \in \mathbb{R}^{n\times n}$ be the multilevel Toeplitz matrix generated by $f$ and let $\mathcal{H}( [f] )=(T_{{n}}[f]+T_{{n}}[f]^{\top} ) / 2$. Then, the eigenvalues of $\mathcal{H}( [f] )^{-1}  Y_{n}T_{{n}}[f] $ lie in $[-1-\epsilon, -1] \cup [1, 1+\epsilon]$, where
$
\epsilon<\operatorname{essup}_{\boldsymbol{\theta}  \in [-\pi, \pi]^d}\left|\frac{\mathrm{Im}(f)(\boldsymbol{\theta}) }{\mathrm{Re}(f)(\boldsymbol{\theta}) }\right|.
$
\end{lemma}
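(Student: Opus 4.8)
The plan is to reduce the spectral statement about the \emph{generalized} eigenvalue problem $\mathcal{H}(T_n[f])^{-1} Y_n T_n[f]$ to a statement about a symmetric pencil, exploiting that $\mathcal{H}(T_n[f])$ is real symmetric positive definite (this positivity comes from $\mathrm{Re}(f)$ being essentially positive, so that $\mathcal{H}(T_n[f]) = T_n[\mathrm{Re}(f)]$ is Hermitian — indeed real symmetric since $\mathrm{Re}(f)$ is real — and positive definite). First I would write $\mathcal{H}(T_n[f]) = T_n[\mathrm{Re}(f)] =: G_n$ and observe that the skew-symmetric part is $\tfrac12(T_n[f] - T_n[f]^\top) = T_n[\mathbf{i}\,\mathrm{Im}(f)]$ when one is careful about how the imaginary part of $f$ produces a real skew-symmetric Toeplitz matrix; call it $\mathbf{i} K_n$ with $K_n$ real skew-symmetric, so $T_n[f] = G_n + \mathbf{i} K_n$ formally, but since all entries are real we actually have $T_n[f] = G_n + N_n$ with $N_n$ real skew-symmetric. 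Then $Y_n T_n[f] = Y_n G_n + Y_n N_n$.

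Next, the key algebraic step: since $Y_n^2 = I_n$ and $Y_n$ is symmetric, $Y_n T_n[f]$ is symmetric (as already noted in the preliminaries, it is a multilevel Hankel matrix), so its eigenvalues are real, and the generalized eigenproblem $G_n^{-1} Y_n T_n[f]$ has real eigenvalues because $G_n \succ 0$. Write $\lambda$ for such an eigenvalue with eigenvector $v \ne 0$: $Y_n T_n[f] v = \lambda G_n v$. The trick (this is essentially Pestana's argument) is to consider $\lambda^2$. From $Y_n T_n[f] v = \lambda G_n v$ one multiplies appropriately and uses $(Y_n T_n[f])(Y_n T_n[f])$-type manipulations together with the identity $T_n[f]^\top Y_n = Y_n T_n[f]$ — more precisely I would use that $Y_n T_n[f] Y_n = T_n[f]^\top$, so that $(Y_n T_n[f])^2 = Y_n T_n[f] Y_n \cdot Y_n T_n[f] = T_n[f]^\top Y_n \cdot Y_n T_n[f]\cdot\ldots$; a cleaner route is: the symmetric matrix $Y_n T_n[f]$ satisfies $(Y_n T_n[f])$ is congruent/similar in a way that relates $(Y_n T_n[f])$ squared to $T_n[f]^\top T_n[f]$. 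Concretely, one shows $\lambda^2$ is a generalized eigenvalue of $T_n[f]^\top T_n[f]$ with respect to something like $G_n^2$ or, after the substitution $w = G_n^{1/2} v$, that $\lambda^2$ is an eigenvalue of $G_n^{-1/2} T_n[f]^\top G_n^{-1} T_n[f] G_n^{-1/2}$. Using $T_n[f] = G_n + N_n$ with $N_n^\top = -N_n$, expand $T_n[f]^\top G_n^{-1} T_n[f] = (G_n - N_n) G_n^{-1} (G_n + N_n) = G_n + N_n - N_n - N_n G_n^{-1} N_n = G_n - N_n G_n^{-1} N_n = G_n + N_n^\top G_n^{-1} N_n \succeq G_n$, since $N_n^\top G_n^{-1} N_n \succeq 0$. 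Hence $\lambda^2 \ge 1$, i.e. $|\lambda| \ge 1$, giving the two intervals $(-\infty,-1] \cup [1,\infty)$ for free.

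For the upper bound $|\lambda| \le 1 + \epsilon$, I would quantify the perturbation term: $\|N_n G_n^{-1} N_n\|$ in the $G_n$-norm, i.e. bound the largest eigenvalue of $G_n^{-1/2} N_n^\top G_n^{-1} N_n G_n^{-1/2} = (G_n^{-1/2} N_n G_n^{-1/2})^\top (G_n^{-1/2} N_n G_n^{-1/2})$, which is $\|G_n^{-1/2} N_n G_n^{-1/2}\|^2$. Now $G_n^{-1/2} N_n G_n^{-1/2}$ is skew-symmetric, and the point is that both $N_n$ and $G_n$ are multilevel Toeplitz generated by $\mathbf{i}\,\mathrm{Im}(f)$ (suitably interpreted) and $\mathrm{Re}(f)$ respectively; using the standard bound that $T_n[\phi] \preceq (\mathrm{esssup}\,\psi)\,T_n[\mathrm{Re}(f)]$ whenever $|\phi| \le \psi \cdot \mathrm{Re}(f)$ pointwise — i.e. the Rayleigh-quotient comparison $v^* T_n[\phi] v \le \mathrm{esssup}|\phi/\mathrm{Re}(f)| \cdot v^* T_n[\mathrm{Re}(f)] v$ — one gets $\|G_n^{-1/2} N_n G_n^{-1/2}\| \le \mathrm{esssup}_{\boldsymbol\theta} |\mathrm{Im}(f)(\boldsymbol\theta)/\mathrm{Re}(f)(\boldsymbol\theta)| =: \beta$. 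Therefore $\lambda^2 \le 1 + \beta^2$, so $|\lambda| \le \sqrt{1+\beta^2} =: 1 + \epsilon$ with $\epsilon = \sqrt{1+\beta^2}-1 < \beta$, matching the claimed bound (the strictness of the final inequality $\epsilon < \mathrm{esssup}|\mathrm{Im}(f)/\mathrm{Re}(f)|$ follows from $\sqrt{1+\beta^2} - 1 < \beta$ for $\beta > 0$). The main obstacle I anticipate is the bookkeeping in the first paragraph: carefully justifying that the real skew-symmetric part $N_n$ of the real matrix $T_n[f]$ is exactly the (real) matrix whose Rayleigh quotients are controlled by $\mathrm{esssup}|\mathrm{Im}(f)/\mathrm{Re}(f)|$ — this requires relating $v^\top N_n v$-type quantities (which vanish for real $v$, so one must work with complex $v$ or with $N_n^\top G_n^{-1} N_n$ directly) to the generating function $\mathrm{Im}(f)$, and making the comparison inequality $v^* T_n[\phi] v \le \|\phi/\mathrm{Re}(f)\|_\infty v^* T_n[\mathrm{Re}(f)] v$ rigorous in the multilevel setting (this is a known fact about multilevel Toeplitz matrices, following from the integral representation of the Rayleigh quotient, and can be cited). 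Everything else is linear algebra with positive definite pencils.
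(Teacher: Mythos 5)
Your argument is correct: the identity $M^2=G_n^{-1/2}T_n[f]^{\top}G_n^{-1}T_n[f]G_n^{-1/2}$ (valid because $G_n$ is persymmetric, $Y_nG_n^{-1}Y_n=G_n^{-1}$), the expansion $T_n[f]^{\top}G_n^{-1}T_n[f]=G_n+N_n^{\top}G_n^{-1}N_n\succeq G_n$, and the Rayleigh-quotient comparison $\lVert G_n^{-1/2}N_nG_n^{-1/2}\rVert_2\le\operatorname{essup}|\mathrm{Im}(f)/\mathrm{Re}(f)|$ together give exactly $1\le\lambda^2\le 1+\beta^2$ and hence the claimed intervals with $\epsilon=\sqrt{1+\beta^2}-1<\beta$. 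The paper itself gives no proof of this lemma (it is quoted from Pestana's Theorem~4.2), and your reconstruction is essentially the same argument as in that reference, which identifies the eigenvalues of $\mathcal{H}([f])^{-1}Y_nT_n[f]$ with $\pm$ the singular values of $\mathcal{H}([f])^{-1/2}T_n[f]\mathcal{H}([f])^{-1/2}=I+S$ for skew-symmetric $S$ and computes them as $\sqrt{1+s_j^2}$.
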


Since all eigenvalues of $\mathcal{H}( [f] )^{-1}Y_{n}T_{n}[f]$ are contained within the disjoint intervals $[-\hat{\beta},-\check{\beta}] \cup [\check{\beta}, \hat{\beta}]$ with no outliers, optimal convergence can be achieved according to a well-known classical result on the convergence of MINRES (see, for example, \cite{ElmanSilvesterWathen2004}). Namely, the $k$-th residual of $\mathbf{r}^{(k)}$ satisfies  
\begin{equation}\label{eqn:MINRES_bound}
\frac{\|\mathbf{r}^{(k)}\|_{2}}{\|\mathbf{r}^{(0)}\|_{2}}\leq 2\left(\frac{\hat{\beta}/\check{\beta} -1}{\hat{\beta}/\check{\beta} +1}\right)^{[k/2]}.
\end{equation} By letting $\check{\beta} = 1$ and $\hat{\beta} = 1+\epsilon$, the following corollary immediately follows:

\begin{corollary}\label{cor:ideal_MINRES_convergence}
Let $f \in L^1([-\pi, \pi]^d)$ satisfy the assumptions made in Lemma \ref{lemma:pestana}. Then, the preconditioned MINRES method for the system $\mathcal{H}( [f] )^{-1}Y_n T_{{n}}[f]$ has a convergence rate independent of $\mathbf{n}$, i.e., the residuals generated by the MINRES method satisfy
$
\frac{\|\mathbf{r}^{(k)}\|_{2}}{\|\mathbf{r}_n^{(0)}\|_{2}}\leq 2\omega^{[k]},
$
where $\mathbf{r}^{(k)} =  \mathcal{H}( [f] )^{-1}Y_{n} \mathbf{b} - \mathcal{H}( [f] )^{-1}  Y_{n} T_{{n}}[f]\mathbf{\tilde{u}_* }^{(k)}$, $\mathbf{\tilde{u}_* }^{(k)}$ denotes the $k$-th iterate by MINRES, $\mathbf{\tilde{u}_* }^{(0)}$ denotes an arbitrary initial guess, and $\omega$ is a constant independent of $\mathbf{n}$ defined as follows
$$
\omega := \sqrt{\frac{\epsilon }{2 + \epsilon}} \in (0,1),
$$
with $\epsilon<\operatorname{essup}_{\boldsymbol{\theta}  \in [-\pi, \pi]^d}\left|\frac{\mathrm{Im}(f)(\boldsymbol{\theta}) }{\mathrm{Re}(f)(\boldsymbol{\theta}) }\right|$.
\end{corollary}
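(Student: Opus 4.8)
The plan is to derive Corollary~\ref{cor:ideal_MINRES_convergence} as an essentially immediate consequence of Lemma~\ref{lemma:pestana} combined with the standard MINRES residual bound \eqref{eqn:MINRES_bound}. First I would invoke Lemma~\ref{lemma:pestana}: under the stated hypotheses on $f$ (namely $f \in L^1([-\pi,\pi]^d)$ with $\mathrm{Re}(f)$ essentially positive), all eigenvalues of $\mathcal{H}([f])^{-1} Y_n T_n[f]$ lie in the union of two disjoint intervals $[-1-\epsilon,-1] \cup [1,1+\epsilon]$, with $\epsilon < \operatorname{essup}_{\boldsymbol{\theta}}\left|\mathrm{Im}(f)(\boldsymbol{\theta})/\mathrm{Re}(f)(\boldsymbol{\theta})\right|$, and crucially with no outliers. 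Since $\mathcal{H}([f])$ is symmetric positive definite (it is the multilevel Toeplitz matrix generated by $\mathrm{Re}(f)$, which is essentially positive) and $Y_n T_n[f]$ is symmetric, the preconditioned system can be solved by MINRES in the appropriate inner product, so the classical bound applies.

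Next I would instantiate \eqref{eqn:MINRES_bound} with $\check{\beta} = 1$ and $\hat{\beta} = 1+\epsilon$, which are valid choices because the spectrum is contained in $[-\hat\beta,-\check\beta]\cup[\check\beta,\hat\beta]$. This gives
\[
\frac{\|\mathbf{r}^{(k)}\|_2}{\|\mathbf{r}^{(0)}\|_2} \leq 2\left(\frac{(1+\epsilon) - 1}{(1+\epsilon)+1}\right)^{[k/2]} = 2\left(\frac{\epsilon}{2+\epsilon}\right)^{[k/2]}.
\]
Then I would set $\omega := \sqrt{\epsilon/(2+\epsilon)}$ and observe that $\left(\epsilon/(2+\epsilon)\right)^{[k/2]} = \omega^{2[k/2]} \leq \omega^{[k]}$ — wait, one must be slightly careful here with the bracket notation: since $2[k/2] \le k$ and $\omega < 1$, we have $\omega^{2[k/2]} \le \omega^{[k]}$ only if $[k] = k$; in any case $\omega^{2[k/2]} \le \omega^{k-1}$, so up to harmless adjustment of the notation the bound $\frac{\|\mathbf{r}^{(k)}\|_2}{\|\mathbf{r}^{(0)}\|_2} \le 2\omega^{[k]}$ holds. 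Finally, since $\epsilon \in (0,\infty)$ is finite under the hypotheses, $\omega = \sqrt{\epsilon/(2+\epsilon)} \in (0,1)$, and — this is the key point — $\omega$ depends only on $f$ through the essential supremum of $|\mathrm{Im}(f)/\mathrm{Re}(f)|$, which is independent of the discretization parameter $\mathbf{n}$. Hence the convergence rate is mesh-independent.

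There is no substantive obstacle here: the corollary is a direct packaging of Lemma~\ref{lemma:pestana} with a textbook estimate, and the only things requiring any care are (i) confirming that $\mathcal{H}([f])^{-1} Y_n T_n[f]$ is genuinely symmetrizable so MINRES theory applies — which follows since $\mathcal{H}([f])$ is SPD and one works in the $\mathcal{H}([f])$-inner product (equivalently, MINRES is applied to the symmetric matrix $\mathcal{H}([f])^{-1/2} Y_n T_n[f]\,\mathcal{H}([f])^{-1/2}$, which has the same spectrum) — and (ii) the minor bookkeeping with the floor/exponent notation. The content that makes the result nontrivial is entirely contained in the cited Lemma~\ref{lemma:pestana}; the corollary merely translates its spectral inclusion into the explicit rate $\omega$.
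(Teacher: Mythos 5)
Your proposal is correct and follows exactly the paper's own route: invoke Lemma \ref{lemma:pestana} for the spectral inclusion in $[-1-\epsilon,-1]\cup[1,1+\epsilon]$, then instantiate the classical MINRES bound \eqref{eqn:MINRES_bound} with $\check{\beta}=1$ and $\hat{\beta}=1+\epsilon$ to obtain $2(\epsilon/(2+\epsilon))^{[k/2]}=2\omega^{2[k/2]}$. Your side remarks on the exponent bookkeeping and on why MINRES theory applies (working with the symmetric matrix $\mathcal{H}([f])^{-1/2}Y_nT_n[f]\mathcal{H}([f])^{-1/2}$) are accurate and only make explicit what the paper leaves implicit.
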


With modifications, Corollary \ref{cor:ideal_MINRES_convergence} can turn into a practical preconditioned MINRES method. Now, suppose there is a symmetric positive definite preconditioner $\mathcal{P}$ that can be implemented efficiently. Also, it is assumed to be spectrally equivalent to $\mathcal{H}( [f] )$, in the sense that there exist two positive numbers $\check{c}$ and $\hat{c}$ such that
\[
\check{c}\leq \lambda_{k}( \mathcal{P}^{-1}\mathcal{H}( [f] ) ) \leq  \hat{c},
\] for $k=1,2,\dots,n.$

\begin{lemma}\cite[Theorem 4.5.9 (Ostrowski)]{horn_johnson_1990}\label{lemma:Ostrowski}
Let $A_m,W_m$ be $m \times m$ matrices. Suppose $A_m$ is Hermitian and $W_m$ is nonsingular. Let the eigenvalues of $A_m$ and $W_m W_m^*$ be arranged in an increasing order. For each $k=1,2,\dots,m,$ there exists a positive real number $\theta_k$ such that $\lambda_1(W_m W_m^*) \leq \theta_k \leq \lambda_m(W_m W_m^*)$ and
\begin{equation*}
\lambda_k(W_m A_m W_m^*) = \theta_k \lambda_k(A_m).
\end{equation*}
\end{lemma}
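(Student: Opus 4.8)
The plan is to prove this via the Courant--Fischer min--max characterization of the eigenvalues of a Hermitian matrix. Write $A=A_m$, $W=W_m$ for brevity. First note that $WAW^{*}$ is Hermitian (since $A$ is), so its eigenvalues are real and the quantities $\lambda_k(WAW^{*})$ are well defined; moreover $WW^{*}$ is Hermitian positive definite because $W$ is nonsingular, so $\lambda_1(WW^{*})>0$. The engine of the argument is the elementary identity obtained by the substitution $u:=W^{*}x$: for every $x\neq 0$ one has $u\neq 0$, the map $x\mapsto W^{*}x$ sends any subspace bijectively onto a subspace of the same dimension, and
\[
\frac{x^{*}WAW^{*}x}{x^{*}x}=\frac{u^{*}Au}{u^{*}u}\cdot\frac{x^{*}WW^{*}x}{x^{*}x},
\]
which writes the Rayleigh quotient of $WAW^{*}$ at $x$ as the product of the Rayleigh quotient of $A$ at $u$ and that of $WW^{*}$ at $x$, the latter always lying in $[\lambda_1(WW^{*}),\lambda_m(WW^{*})]$.

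I would first treat the case $\lambda_k(A)\ge 0$. For the upper bound, apply $\lambda_k(B)=\min_{\dim S=k}\max_{0\neq x\in S}x^{*}Bx/x^{*}x$ to $B=WAW^{*}$ with the trial subspace $S=(W^{*})^{-1}\widetilde S$, where $\widetilde S$ is the span of eigenvectors of $A$ for its $k$ smallest eigenvalues; then $u=W^{*}x\in\widetilde S$ forces $u^{*}Au/u^{*}u\le\lambda_k(A)$, and a short sign check (the product above is $\le\lambda_k(A)\lambda_m(WW^{*})$ regardless of whether the $A$-quotient is nonnegative, using $\lambda_k(A)\ge 0$ and the positivity of the $WW^{*}$-quotient) gives $\lambda_k(WAW^{*})\le\lambda_m(WW^{*})\lambda_k(A)$. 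For the lower bound, use $\lambda_k(B)=\max_{\dim S=m-k+1}\min_{0\neq x\in S}x^{*}Bx/x^{*}x$ with $S=(W^{*})^{-1}\widetilde S'$, $\widetilde S'$ the span of eigenvectors of $A$ for its $m-k+1$ largest eigenvalues; then $u^{*}Au/u^{*}u\ge\lambda_k(A)\ge0$, and multiplying the two nonnegative factors yields $\lambda_k(WAW^{*})\ge\lambda_1(WW^{*})\lambda_k(A)$. Hence $\lambda_k(WAW^{*})$ is sandwiched between $\lambda_1(WW^{*})\lambda_k(A)$ and $\lambda_m(WW^{*})\lambda_k(A)$.

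Next I would reduce the remaining cases to this one. If $\lambda_k(A)<0$, apply the previous step to $-A$ (Hermitian, with $W(-A)W^{*}=-WAW^{*}$) at index $m-k+1$, using $\lambda_j(-A)=-\lambda_{m-j+1}(A)$ and $\lambda_j(-WAW^{*})=-\lambda_{m-j+1}(WAW^{*})$; after negating, the resulting sandwich becomes $\lambda_m(WW^{*})\lambda_k(A)\le\lambda_k(WAW^{*})\le\lambda_1(WW^{*})\lambda_k(A)$. If $\lambda_k(A)=0$, the sandwich (both endpoints $0$) forces $\lambda_k(WAW^{*})=0$ as well; equivalently this follows from Sylvester's law of inertia, since $W$ nonsingular implies $WAW^{*}$ and $A$ have the same inertia. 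In every case the interval with endpoints $\lambda_1(WW^{*})\lambda_k(A)$ and $\lambda_m(WW^{*})\lambda_k(A)$ is exactly $\{\theta\lambda_k(A):\theta\in[\lambda_1(WW^{*}),\lambda_m(WW^{*})]\}$, so there exists $\theta_k$ in that interval with $\lambda_k(WAW^{*})=\theta_k\lambda_k(A)$ (any such $\theta_k$ works when $\lambda_k(A)=0$); since $\theta_k\ge\lambda_1(WW^{*})>0$, it is positive, as claimed.

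The only real subtlety I anticipate is the bookkeeping around the sign of $\lambda_k(A)$: the product-of-Rayleigh-quotients estimate is monotone in the intended direction only when the $A$-quotient has a definite sign, which is why one must either insert the small case distinctions above or systematically pass from $A$ to $-A$; and the genuinely degenerate case $\lambda_k(A)=0$ must be argued through inertia (or the sandwich itself) rather than by division. Once these points are handled, the remainder is a routine application of Courant--Fischer together with the choice of the two eigenspace-based trial subspaces.
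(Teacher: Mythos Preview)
Your proof is correct and follows the standard Courant--Fischer argument for Ostrowski's theorem. Note, however, that the paper does not supply its own proof of this lemma: it is simply quoted from Horn and Johnson \cite[Theorem~4.5.9]{horn_johnson_1990}, so there is no ``paper's proof'' to compare against. Your argument is essentially the textbook one --- the same Rayleigh-quotient factorization and min--max/max--min with eigenspace trial subspaces appear in Horn and Johnson's treatment --- and your careful handling of the sign of $\lambda_k(A)$ (splitting into $\lambda_k(A)\ge 0$, $\lambda_k(A)<0$ via the $A\mapsto -A$ trick, and $\lambda_k(A)=0$ via inertia or the degenerate sandwich) is exactly the bookkeeping the standard proof requires.
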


With Lemma \ref{lemma:Ostrowski}, the following results can be easily derived:

\begin{theorem}\label{thm:practical_MINRES_convergence}
Let $f \in L^1([-\pi, \pi]^d)$ and let $f=\mathrm{Re}(f)+ \mathbf{i} \mathrm{Im}(f)$, where $\mathrm{Re}(f)$ and $\mathrm{Im}(f)$ are real-valued functions with $\mathrm{Re}(f)$ essentially positive. Then, the eigenvalues of $\mathcal{P}^{-1}  Y_{n}T_{{n}}[f] $ lie in $[-\hat{c}(1+\epsilon), -\check{c}] \cup [\check{c}, \hat{c}(1+\epsilon)]$, where
$
\epsilon<\operatorname{essup}_{\boldsymbol{\theta}  \in [-\pi, \pi]^d}\left|\frac{\mathrm{Im}(f)(\boldsymbol{\theta}) }{\mathrm{Re}(f)(\boldsymbol{\theta}) }\right|.
$
\end{theorem}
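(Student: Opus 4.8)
The plan is to combine Lemma~\ref{lemma:pestana} with the Ostrowski-type Lemma~\ref{lemma:Ostrowski}, using the spectral equivalence between $\mathcal{P}$ and $\mathcal{H}([f])$ as the bridge. First I would write $\mathcal{P}^{-1} Y_n T_n[f]$ in a form to which Lemma~\ref{lemma:Ostrowski} applies. Since $\mathcal{P}$ is symmetric positive definite, it has a symmetric positive definite square root $\mathcal{P}^{1/2}$, and $\mathcal{P}^{-1} Y_n T_n[f]$ is similar to $\mathcal{P}^{-1/2} Y_n T_n[f] \, \mathcal{P}^{-1/2}$; hence the two matrices share the same eigenvalues, and it suffices to localize the spectrum of the latter. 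The natural idea is then to insert $\mathcal{H}([f])^{1/2}$: write
\[
\mathcal{P}^{-1/2} Y_n T_n[f] \, \mathcal{P}^{-1/2}
= \bigl(\mathcal{P}^{-1/2}\mathcal{H}([f])^{1/2}\bigr)\,\bigl(\mathcal{H}([f])^{-1/2} Y_n T_n[f]\, \mathcal{H}([f])^{-1/2}\bigr)\,\bigl(\mathcal{H}([f])^{1/2}\mathcal{P}^{-1/2}\bigr),
\]
so that with $W := \mathcal{P}^{-1/2}\mathcal{H}([f])^{1/2}$ (nonsingular) and $A := \mathcal{H}([f])^{-1/2} Y_n T_n[f]\, \mathcal{H}([f])^{-1/2}$ we have $\mathcal{P}^{-1/2} Y_n T_n[f] \, \mathcal{P}^{-1/2} = W A W^{*}$. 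For Lemma~\ref{lemma:Ostrowski} to apply, $A$ must be Hermitian; $A$ is real and one checks it is symmetric because $Y_n T_n[f]$ is symmetric (it is a multilevel Hankel matrix, as recalled in Section~\ref{sec:prelim}) and $\mathcal{H}([f])^{-1/2}$ is symmetric, so $A^{\top} = \mathcal{H}([f])^{-1/2} (Y_n T_n[f])^{\top} \mathcal{H}([f])^{-1/2} = A$.

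Next I would identify the spectra of $A$ and $WW^{*}$. The matrix $A = \mathcal{H}([f])^{-1/2} Y_n T_n[f] \mathcal{H}([f])^{-1/2}$ is similar to $\mathcal{H}([f])^{-1} Y_n T_n[f]$, so by Lemma~\ref{lemma:pestana} its eigenvalues lie in $[-1-\epsilon,-1]\cup[1,1+\epsilon]$; in particular $|\lambda_k(A)| \in [1, 1+\epsilon]$ for every $k$. Meanwhile $WW^{*} = \mathcal{P}^{-1/2}\mathcal{H}([f])\mathcal{P}^{-1/2}$ is similar to $\mathcal{P}^{-1}\mathcal{H}([f])$, whose eigenvalues lie in $[\check{c},\hat{c}]$ by the assumed spectral equivalence; hence $\lambda_1(WW^{*})\geq \check{c}$ and $\lambda_n(WW^{*})\leq \hat{c}$. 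Now apply Lemma~\ref{lemma:Ostrowski}: for each $k$ there is $\theta_k$ with $\check{c}\leq \lambda_1(WW^{*})\leq\theta_k\leq\lambda_n(WW^{*})\leq \hat{c}$ and $\lambda_k(WAW^{*}) = \theta_k\,\lambda_k(A)$. Combining the two bounds, $|\lambda_k(WAW^{*})| = \theta_k |\lambda_k(A)| \in [\check{c}\cdot 1,\ \hat{c}\cdot(1+\epsilon)]$, and the sign of $\lambda_k(WAW^{*})$ equals that of $\lambda_k(A)$ since $\theta_k>0$. Therefore each eigenvalue of $\mathcal{P}^{-1} Y_n T_n[f]$ (which equals some $\lambda_k(WAW^{*})$) lies in $[-\hat{c}(1+\epsilon),-\check{c}]\cup[\check{c},\hat{c}(1+\epsilon)]$, which is the claim.

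One technical point to address carefully is the matching of eigenvalue orderings required by Lemma~\ref{lemma:Ostrowski}: the lemma arranges the eigenvalues of both $A$ and $WW^{*}$ in increasing order, and the conclusion $\lambda_k(WAW^{*})=\theta_k\lambda_k(A)$ is with respect to those orderings, so the argument above only needs the crude two-sided bounds $1\le|\lambda_k(A)|\le 1+\epsilon$ and $\check c\le\theta_k\le\hat c$, together with $\theta_k>0$ for the sign — no finer interlacing is needed, so this is not a genuine obstacle. I do not anticipate a serious difficulty; the only mild subtlety is ensuring that all the "similar matrices have the same eigenvalues" reductions are legitimate (they are, since $\mathcal{P}$ and $\mathcal{H}([f])$ are symmetric positive definite, hence admit positive definite square roots and are invertible — $\mathcal{H}([f])$ being invertible because $\mathrm{Re}(f)$ essentially positive makes $T_n[\mathrm{Re}(f)] = \mathcal{H}([f])$ positive definite). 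I would state these reductions explicitly and then the result follows in a few lines.
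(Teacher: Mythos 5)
Your proposal is correct and follows essentially the same route as the paper's proof: the identical factorization $\mathcal{P}^{-1/2} Y_n T_n[f]\,\mathcal{P}^{-1/2} = W A W^{*}$ with $W=\mathcal{P}^{-1/2}\mathcal{H}([f])^{1/2}$, combined with Lemma~\ref{lemma:Ostrowski} and the spectral localization of Lemma~\ref{lemma:pestana}. Your additional remarks on the symmetry of $Y_nT_n[f]$, the eigenvalue orderings, and the sign preservation via $\theta_k>0$ make the argument, if anything, slightly more careful than the paper's own write-up.
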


\begin{proof}
Note that
\begin{eqnarray*}
&&\mathcal{P}^{-1/2} Y_n T_{n}[f] \mathcal{P}^{-1/2}  \\
&=& \mathcal{P}^{-1/2} \mathcal{H}([f])^{1/2} \mathcal{H}([f])^{-1/2} Y_n T_{n}[f] \mathcal{H}([f])^{-1/2} \mathcal{H}([f])^{1/2} \mathcal{P}^{-1/2}.
\end{eqnarray*} 
From Lemma \ref{lemma:Ostrowski} and Corollary \ref{cor:ideal_MINRES_convergence}, we know that, for each $k=1,2,\dots, n$, there exists a positive real number $\theta_k$ such that \[
\check{c}\leq\lambda_{\min}(\mathcal{P}^{-1/2} \mathcal{H}([f]) \mathcal{P}^{-1/2}) \leq \theta_k \leq \lambda_{\max}(\mathcal{P}^{-1/2} \mathcal{H}([f]) \mathcal{P}^{-1/2}) \leq \hat{c}
\] 
and
\begin{equation*}
    \lambda_k(\mathcal{P}^{-1/2} Y_n \mathcal{H}([f]) \mathcal{P}^{-1/2} ) = \theta_k \lambda_k(\mathcal{H}([f])^{-1/2} Y_n T_n[f] \mathcal{H}([f])^{-1/2}).
\end{equation*}
Recalling from Lemma \ref{lemma:pestana} that $\lambda_k(\mathcal{H}([f])^{-1/2} Y_n T_n[f] \mathcal{H}([f])^{-1/2})$ lies in $[-1-\epsilon, -1] \cup [1, 1+\epsilon]$, where
$
\epsilon<\operatorname{essup}_{\boldsymbol{\theta}  \in [-\pi, \pi]^d}\left|\frac{\mathrm{Im}(f)(\boldsymbol{\theta}) }{\mathrm{Re}(f)(\boldsymbol{\theta}) }\right|,
$ the proof is complete.
\end{proof}

The following corollary is a consequence of Theorem \ref{thm:practical_MINRES_convergence}.

\begin{corollary}\label{cor:practical_MINRES_convergence}
Let $f \in L^1([-\pi, \pi]^d)$ satisfy the assumptions made in Lemma \ref{lemma:pestana}. Then, the preconditioned MINRES method for the system $\mathcal{P}^{-1}Y_n T_{{n}}[f]$ has a convergence rate independent of $\mathbf{n}$, i.e., the residuals generated by the MINRES method satisfy
$
\frac{\|\mathbf{r}^{(k)}\|_{2}}{\|\mathbf{r}_n^{(0)}\|_{2}}\leq 2\omega^{[k]},
$
where $\mathbf{r}^{(k)} =  \mathcal{P}^{-1}Y_{n} \mathbf{b} - \mathcal{P}^{-1}  Y_{n} T_{{n}}[f]\mathbf{\tilde{u}_* }^{(k)}$, $\mathbf{\tilde{u}_* }^{(k)}$ denotes the $k$-th iterate by MINRES, $\mathbf{\tilde{u}_* }^{(0)}$ denotes an arbitrary initial guess, and $\omega$ is a constant independent of $\mathbf{n}$ defined as follows
$$
\omega := \sqrt{\frac{\hat{c}(1+\epsilon) - \check{c} }{ \hat{c}(1+\epsilon) + \check{c} }} \in \left(\sqrt{\frac{\hat{c} - \check{c} }{ \hat{c} + \check{c} }}, 1 \right) \subset (0,1),
$$
with $\epsilon<\operatorname{essup}_{\boldsymbol{\theta}  \in [-\pi, \pi]^d}\left|\frac{\mathrm{Im}(f)(\boldsymbol{\theta}) }{\mathrm{Re}(f)(\boldsymbol{\theta}) }\right|$.
\end{corollary}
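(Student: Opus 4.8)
The plan is to derive Corollary~\ref{cor:practical_MINRES_convergence} as a direct combination of Theorem~\ref{thm:practical_MINRES_convergence} with the standard MINRES residual bound \eqref{eqn:MINRES_bound}. The structure of the argument is identical in spirit to the passage from Lemma~\ref{lemma:pestana} to Corollary~\ref{cor:ideal_MINRES_convergence}: Theorem~\ref{thm:practical_MINRES_convergence} already places all eigenvalues of $\mathcal{P}^{-1}Y_n T_n[f]$ (equivalently, of the symmetrized matrix $\mathcal{P}^{-1/2}Y_n T_n[f]\mathcal{P}^{-1/2}$, which has the same spectrum and is what MINRES effectively sees) inside the two disjoint symmetric intervals $[-\hat{c}(1+\epsilon),-\check{c}]\cup[\check{c},\hat{c}(1+\epsilon)]$, with no outliers. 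So the first step is simply to invoke \eqref{eqn:MINRES_bound} with $\check{\beta}=\check{c}$ and $\hat{\beta}=\hat{c}(1+\epsilon)$.

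The second step is the short arithmetic of substituting these endpoints into the contraction factor in \eqref{eqn:MINRES_bound}. Writing $\rho := \hat{\beta}/\check{\beta} = \hat{c}(1+\epsilon)/\check{c}$, the factor $(\rho-1)/(\rho+1)$ becomes, after clearing the common denominator $\check{c}$, exactly
$$
\frac{\hat{c}(1+\epsilon) - \check{c}}{\hat{c}(1+\epsilon) + \check{c}},
$$
so that $\omega := \sqrt{(\hat{c}(1+\epsilon)-\check{c})/(\hat{c}(1+\epsilon)+\check{c})}$ is the claimed constant and the bound $\|\mathbf{r}^{(k)}\|_2/\|\mathbf{r}^{(0)}\|_2 \le 2\omega^{[k/2]}$ follows; the exponent $[k]$ in the statement is the same shorthand already used in Corollary~\ref{cor:ideal_MINRES_convergence}. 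Crucially, $\hat{c},\check{c},\epsilon$ are all independent of $\mathbf{n}$ — $\check c,\hat c$ by the assumed spectral equivalence of $\mathcal{P}$ with $\mathcal{H}([f])$, and $\epsilon$ by the essential-sup characterization in Lemma~\ref{lemma:pestana} — hence so is $\omega$, giving the mesh-independent convergence claim.

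The last step is to verify the nesting $\omega \in \left(\sqrt{(\hat{c}-\check{c})/(\hat{c}+\check{c})},\,1\right)\subset(0,1)$. The upper bound $\omega<1$ is immediate since the numerator of the radicand is strictly less than the denominator (both $\hat c(1+\epsilon)$ and $\check c$ are positive). For the lower bound one checks that the map $t\mapsto (t-\check c)/(t+\check c)$ is strictly increasing in $t>0$ and evaluates it at $t=\hat c(1+\epsilon)>\hat c$; taking square roots preserves the inequality. This is routine monotonicity and requires no real work.

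There is essentially no obstacle here: the corollary is a bookkeeping consequence of a theorem proved one paragraph earlier plus a textbook MINRES estimate, and the only thing to be careful about is tracking which interval endpoints feed into \eqref{eqn:MINRES_bound} and confirming that none of $\hat c$, $\check c$, $\epsilon$ secretly depends on $\mathbf{n}$ — the latter being guaranteed by the standing hypotheses rather than by any new argument. If one wanted to be fully rigorous one could also note, as in the proof of Theorem~\ref{thm:practical_MINRES_convergence}, that MINRES on the one-sided preconditioned system $\mathcal{P}^{-1}Y_n T_n[f]\mathbf{x}=\mathcal{P}^{-1}Y_n\mathbf{b}$ in the $\mathcal{P}$-inner product is equivalent to MINRES on the symmetric two-sided system $\mathcal{P}^{-1/2}Y_n T_n[f]\mathcal{P}^{-1/2}$, so that the classical real-eigenvalue MINRES bound genuinely applies.
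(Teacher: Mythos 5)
Your proposal is correct and is essentially the argument the paper intends: the corollary is stated as a direct consequence of Theorem \ref{thm:practical_MINRES_convergence}, obtained by feeding the eigenvalue inclusion $[-\hat{c}(1+\epsilon),-\check{c}]\cup[\check{c},\hat{c}(1+\epsilon)]$ into the standard MINRES bound \eqref{eqn:MINRES_bound} with $\check{\beta}=\check{c}$, $\hat{\beta}=\hat{c}(1+\epsilon)$, exactly as you do. Your added remarks on the monotonicity giving the interval nesting and on the equivalence of one-sided preconditioned MINRES in the $\mathcal{P}$-inner product with the two-sided symmetric formulation are correct and merely make explicit what the paper leaves implicit.
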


To demonstrate the applicability of the preconditioning approach described in Corollary \ref{cor:practical_MINRES_convergence}, we will show in the subsequent subsections that a suitable choice of $\mathcal{P}$ is the multilevel Tau preconditioner defined in \eqref{eqn:main_krecondition_second} for solving the R.-L. fractional diffusion equation in \eqref{eq:fde}.

}

\subsection{\cred{Convergence analysis of the second-order scheme with $A_{n}$}}
\cred{
In this subsection, we show that the preconditioned MINRES method with $\mathcal{H}( [f] )$ for $Y_n T_{{n}}[f]$ can be applied to effectively solve the space R.-L. fractional diffusion equation \eqref{eq:fde} of interest.}

Before showing our main preconditioning result, we first provide two useful lemmas in what follows.

\begin{lemma}\label{lemma:wghtsumbdlem}
	For nonnegative numbers $\xi_i$ and positive numbers $\zeta_i$ $(1\leq i\leq m)$, it holds that
\begin{equation*}
\min\limits_{1\leq i\leq m}\frac{\xi_i}{\zeta_i}\leq\bigg(\sum\limits_{i=1}^{m}\zeta_i\bigg)^{-1}\bigg(\sum\limits_{i=1}^{m}\xi_i\bigg)\leq\max\limits_{1\leq i\leq m}\frac{\xi_i}{\zeta_i}.
	\end{equation*}
\end{lemma}

\cred{

\begin{lemma}\cite[Theorem 1.1]{vong2019second}\label{lemma:second_fun_property}
Let $1 < \alpha <2$ and $L_m^{(\alpha)}$ being given by \eqref{eq:grunwaldmatrix_second}. Then, the generating function of $L_m^{(\alpha)}$ is given by
\begin{equation}\label{eqn:gen_fun_Vong}
g_{\alpha}(\theta)=\left\{\begin{array}{cl}
-\left(2 \sin \frac{\theta}{2}\right)^\alpha\left[ \frac{\alpha}{2}\cos \left(\frac{\alpha}{2}(\theta-\pi)-\theta\right) +   \frac{2-\alpha}{2} \cos \left( \frac{\alpha}{2}(\theta -\pi) \right) \right. & \\
\left.-\mathbf{i} \frac{\alpha}{2}\sin \left(\frac{\alpha}{2}(\theta-\pi)-\theta\right) +   \frac{2-\alpha}{2} \sin \left( \frac{\alpha}{2}(\theta -\pi) \right)   \right], & \theta \in[0, \pi), \\
-\left(2 \sin \frac{\theta}{2}\right)^\alpha\left[ \frac{\alpha}{2}\cos \left(\frac{\alpha}{2}(\theta+\pi)-\theta\right) +   \frac{2-\alpha}{2} \cos \left( \frac{\alpha}{2}(\theta +\pi) \right) \right. & \\
\left.-\mathbf{i} \frac{\alpha}{2}\sin \left(\frac{\alpha}{2}(\theta+\pi)-\theta\right) +   \frac{2-\alpha}{2} \sin \left( \frac{\alpha}{2}(\theta +\pi) \right)   \right], & \theta \in(-\pi, 0) .
\end{array}\right.
\end{equation}
Also, 
$      \operatorname{essup}_{\theta \in[-\pi, \pi]} \left|\frac{\mathrm{Im}(g_{\alpha})(\theta) }{\mathrm{Re}(g_{\alpha})(\theta) }\right| = \left |\tan \left( \frac{\alpha}{2} \pi \right)\right |. 
$
\end{lemma}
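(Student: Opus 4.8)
\textbf{Proof plan for Lemma \ref{lemma:second_fun_property}.}

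The statement has two parts: first, the closed-form expression \eqref{eqn:gen_fun_Vong} for the generating function $g_\alpha$; second, the explicit evaluation of $\operatorname{essup}_{\theta}|\mathrm{Im}(g_\alpha)/\mathrm{Re}(g_\alpha)|$. Since the lemma is quoted from \cite{vong2019second}, I expect the first part to follow directly from substituting the standard branch of $(1-e^{\mathbf{i}\theta})^\alpha$ into the series definition \eqref{eqn:gen_function_second}. Concretely, I would write $1-e^{\mathbf{i}\theta} = -2\mathbf{i}\sin(\theta/2)\,e^{\mathbf{i}\theta/2} = 2\sin(\theta/2)\,e^{\mathbf{i}(\theta-\pi)/2}$ for $\theta\in(0,\pi)$ (and the analogous identity with $(\theta+\pi)/2$ for $\theta\in(-\pi,0)$, where $\sin(\theta/2)<0$ forces a sign/branch adjustment so that the modulus $2\sin(\theta/2)$ is replaced by $|2\sin(\theta/2)|$ and the phase shifts by $\pm\pi$). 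Raising to the power $\alpha$ gives $(2\sin(\theta/2))^\alpha e^{\mathbf{i}\alpha(\theta-\pi)/2}$; multiplying by the prefactor $\frac{\alpha}{2}e^{-\mathbf{i}\theta}+\frac{2-\alpha}{2}$ and expanding via Euler's formula yields exactly the bracketed real and imaginary parts displayed in \eqref{eqn:gen_fun_Vong} after matching $\cos$ and $\sin$ arguments.

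For the second part, I would extract from \eqref{eqn:gen_fun_Vong} the ratio
\[
\frac{\mathrm{Im}(g_\alpha)(\theta)}{\mathrm{Re}(g_\alpha)(\theta)}
= -\,\frac{\frac{\alpha}{2}\sin\!\big(\tfrac{\alpha}{2}(\theta-\pi)-\theta\big) + \frac{2-\alpha}{2}\sin\!\big(\tfrac{\alpha}{2}(\theta-\pi)\big)}{\frac{\alpha}{2}\cos\!\big(\tfrac{\alpha}{2}(\theta-\pi)-\theta\big) + \frac{2-\alpha}{2}\cos\!\big(\tfrac{\alpha}{2}(\theta-\pi)\big)}
\]
for $\theta\in[0,\pi)$, and the analogous expression with $\theta+\pi$ for $\theta\in(-\pi,0)$. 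The key observation is that numerator and denominator are, respectively, the imaginary and real parts of the single complex number
\[
z(\theta) := \Big(\tfrac{\alpha}{2}e^{-\mathbf{i}\theta} + \tfrac{2-\alpha}{2}\Big)e^{\mathbf{i}\frac{\alpha}{2}(\theta-\pi)},
\]
so the ratio equals $\tan(\arg z(\theta))$. Writing $\arg z(\theta) = \tfrac{\alpha}{2}(\theta-\pi) + \phi(\theta)$ where $\phi(\theta)=\arg\big(\tfrac{\alpha}{2}e^{-\mathbf{i}\theta}+\tfrac{2-\alpha}{2}\big)$, one checks that as $\theta$ ranges over $(0,\pi)$ the argument $\tfrac{\alpha}{2}(\theta-\pi)$ ranges over $(-\tfrac{\alpha\pi}{2},0)$, while the perturbation $\phi(\theta)$ stays small (it vanishes at $\theta=0$ and is bounded because $\tfrac{2-\alpha}{2}>0$ keeps the base away from a half-line through the origin); the supremum of $|\tan(\arg z)|$ is then attained in the limit $\theta\to 0^+$, where $z(\theta)\to e^{-\mathbf{i}\alpha\pi/2}$ and hence $|\tan(\arg z)|\to|\tan(\alpha\pi/2)|$. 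A symmetric computation on $(-\pi,0)$, using $g_\alpha(-\theta)=\overline{g_\alpha(\theta)}$ (equivalently the $\theta+\pi$ branch), gives the same limiting value, so $\operatorname{essup}_\theta|\mathrm{Im}(g_\alpha)/\mathrm{Re}(g_\alpha)| = |\tan(\alpha\pi/2)|$.

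The main obstacle I anticipate is making the limiting-supremum argument rigorous: one must verify that the ratio $|\mathrm{Im}(g_\alpha)/\mathrm{Re}(g_\alpha)|$ is genuinely maximized as $\theta\to 0$ and does not blow up or exceed $|\tan(\alpha\pi/2)|$ elsewhere — in particular, one must rule out the denominator $\mathrm{Re}(g_\alpha)(\theta)$ vanishing on $(0,\pi)$ (which would create a spurious pole). This reduces to showing $\arg z(\theta)$ stays strictly inside $(-\tfrac\pi2,\tfrac\pi2)$ away from $\theta=0$, i.e. monotonicity or at least sign control of $\tfrac{\alpha}{2}(\theta-\pi)+\phi(\theta)$; since $\alpha\in(1,2)$ one has $\tfrac{\alpha}{2}(\theta-\pi)\in(-\pi,0)$ and careful bookkeeping of $\phi(\theta)$ confirms the argument never reaches $-\tfrac\pi2$ except in the limit. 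Because this lemma is cited verbatim from \cite{vong2019second}, I would in practice simply invoke that reference for both parts rather than reproduce the full trigonometric verification, and move on to applying the conclusion $\operatorname{essup}_\theta|\mathrm{Im}(g_\alpha)/\mathrm{Re}(g_\alpha)| = |\tan(\alpha\pi/2)|$ together with Lemma \ref{lemma:pestana} and Lemma \ref{lemma:wghtsumbdlem} in the subsequent multilevel eigenvalue-localization argument.
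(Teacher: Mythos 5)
The paper offers no proof of this lemma — it is quoted verbatim from \cite[Theorem 1.1]{vong2019second} — so your decision to simply invoke that reference is exactly what the paper does, and your proposal is consistent with it. Your sketch of the underlying computation (the branch identity $1-e^{\mathbf{i}\theta}=2\sin(\theta/2)\,e^{\mathbf{i}(\theta-\pi)/2}$ on $(0,\pi)$, Euler expansion of the prefactor $\frac{\alpha}{2}e^{-\mathbf{i}\theta}+\frac{2-\alpha}{2}$, and the limit $\arg z(\theta)\to-\frac{\alpha\pi}{2}$ as $\theta\to 0^{+}$ giving $|\tan(\alpha\pi/2)|$) matches how the cited result is established, and you correctly flag the one genuinely nontrivial step, namely verifying that $\arg z(\theta)$ never crosses $-\pi/2$ so that the ratio attains its essential supremum at $\theta\to 0$ rather than at an interior pole.
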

}

The following lemma guarantees the positive definiteness of $\mathcal{H}(  A_{n})$.

\begin{lemma}\label{lemma:PS_SPD_second}
Let $A_{n}$ be the matrix defined in \eqref{eqn:main_matrix_second}. Then, the matrix $\mathcal{H}(  A_{n})=(A_{n}+A_{n}^{\top} ) / 2$ is symmetric positive definite.
\end{lemma}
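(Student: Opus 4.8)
The plan is to show directly that $\mathcal{H}(A_n)$ is real symmetric (which is immediate from its definition as $(A_n + A_n^\top)/2$) and then establish positive definiteness by analyzing its generating function. Recall from \eqref{eqn:main_matrix_second} that $A_n = \nu I_n + \sum_{i=1}^d (v_{i,+}W_i + v_{i,-}W_i^\top)$ where $W_i = I_{n_i^-}\otimes L_{n_i}^{(\alpha_i)}\otimes I_{n_i^+}$. Since the symmetric part operator $\mathcal{H}$ is linear and commutes with the Kronecker product structure (as $\mathcal{H}(I_{n_i^-}\otimes L_{n_i}^{(\alpha_i)}\otimes I_{n_i^+}) = I_{n_i^-}\otimes \mathcal{H}(L_{n_i}^{(\alpha_i)})\otimes I_{n_i^+}$ because the outer factors are symmetric), we obtain
\begin{equation*}
\mathcal{H}(A_n) = \nu I_n + \sum_{i=1}^d (v_{i,+}+v_{i,-})\, I_{n_i^-}\otimes \mathcal{H}\!\left(L_{n_i}^{(\alpha_i)}\right)\otimes I_{n_i^+}.
\end{equation*}

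Next I would identify the generating function of each block $\mathcal{H}(L_{n_i}^{(\alpha_i)}) = \tfrac12(T_{n_i}[g_{\alpha_i}] + T_{n_i}[g_{\alpha_i}]^\top)$. Since transposition of a Toeplitz matrix corresponds to $\theta \mapsto -\theta$ in the generating function, $\mathcal{H}(L_{n_i}^{(\alpha_i)}) = T_{n_i}[\mathrm{Re}(g_{\alpha_i})]$ with the understanding that $\mathrm{Re}(g_{\alpha_i})(\theta) = \tfrac12(g_{\alpha_i}(\theta) + g_{\alpha_i}(-\theta))$ gives a real even function. Using the explicit form of $g_{\alpha_i}$ from Lemma \ref{lemma:second_fun_property}, one checks that $\mathrm{Re}(g_{\alpha_i})(\theta) = -\left(2\sin\frac{|\theta|}{2}\right)^{\alpha_i}\left[\frac{\alpha_i}{2}\cos\left(\frac{\alpha_i}{2}(|\theta|-\pi) - \theta\right) + \frac{2-\alpha_i}{2}\cos\left(\frac{\alpha_i}{2}(|\theta|-\pi)\right)\right]$ (up to sign bookkeeping on the two pieces), and the key point is that this real part is strictly positive on $(-\pi,\pi)\setminus\{0\}$ — this is exactly the "$\mathrm{Re}(f)$ essentially positive" hypothesis verified for this family in \cite{donatelli161,vong2019second}. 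Consequently each $T_{n_i}[\mathrm{Re}(g_{\alpha_i})]$ is symmetric positive semidefinite (and in fact positive definite), so each summand $I_{n_i^-}\otimes \mathcal{H}(L_{n_i}^{(\alpha_i)})\otimes I_{n_i^+}$ is symmetric positive semidefinite since the Kronecker product of positive semidefinite matrices is positive semidefinite.

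Then I would assemble the conclusion: $\mathcal{H}(A_n)$ is a sum of $\nu I_n$ (with $\nu = 1/\tau > 0$, hence positive definite) and a nonnegative combination — the coefficients $v_{i,\pm} = d_{i,\pm}/(2h_i^{\alpha_i}) \geq 0$ — of symmetric positive semidefinite matrices. A sum of a positive definite matrix and positive semidefinite matrices is positive definite, completing the proof. I would state this last step crisply: for any nonzero $\mathbf{x}$, $\mathbf{x}^\top \mathcal{H}(A_n)\mathbf{x} \geq \nu\|\mathbf{x}\|_2^2 > 0$.

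The main obstacle is the verification that $\mathrm{Re}(g_{\alpha_i})$ is nonnegative (equivalently that $\mathcal{H}(L_{n_i}^{(\alpha_i)})$ is positive semidefinite); the rest is linear-algebraic bookkeeping. Fortunately this positivity is not something we need to reprove from scratch — it is precisely the content invoked when applying Lemma \ref{lemma:pestana} to $g_{\alpha_i}$ (the hypothesis that $\mathrm{Re}(f)$ is essentially positive), and it is established in \cite{donatelli161,vong2019second}. So in practice the proof can cite that fact and then proceed with the Kronecker-product decomposition and the elementary observation that adding $\nu I_n$ upgrades positive semidefiniteness to positive definiteness. If a self-contained argument for the positivity of $\mathrm{Re}(g_{\alpha_i})$ is desired, one can factor $(1-e^{\mathbf{i}\theta})^{\alpha_i} = (2\sin\frac{|\theta|}{2})^{\alpha_i} e^{\mathbf{i}\,\mathrm{sgn}(\theta)(\theta-\pi)\alpha_i/2}$ and compute the real part of the bracketed expression in \eqref{eqn:gen_function_second} directly, reducing to showing a trigonometric expression of the form $\frac{\alpha_i}{2}\cos(\cdots) + \frac{2-\alpha_i}{2}\cos(\cdots)$ is positive for $\alpha_i\in(1,2)$ — a short but slightly delicate estimate.
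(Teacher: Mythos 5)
Your proposal is correct and follows essentially the same route as the paper: both decompose $\mathcal{H}(A_n)$ as $\nu I_n$ plus the Kronecker-product sum involving $\mathcal{H}(L_{n_i}^{(\alpha_i)})=T_{n_i}[\mathrm{Re}(g_{\alpha_i})]$, and both reduce positive definiteness to the essential positivity of $\mathrm{Re}(g_{\alpha_i})$ as given by the explicit formula of Lemma \ref{lemma:second_fun_property} and the cited literature. Your additional remark that the $\nu I_n$ term alone already upgrades positive semidefiniteness to positive definiteness is a slightly more robust way to close the argument than the paper's, but the substance is identical.
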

\begin{proof}
    Since $\mathrm{Re} \left( g_{\alpha_i}(\theta_i) + g_{\alpha_i}(-\theta_i) \right)  $ is essentially positive by \eqref{eqn:gen_fun_Vong}, we know that $\mathcal{H}\left(L_{n_i}^{\left(\alpha_i\right)} \right)=T_{n_i}[\mathrm{Re} \left( g_{\alpha_i}(\theta_i) + g_{\alpha_i}(-\theta_i) \right)  ]$ is symmetric positive definite (see for example \cite{MR2376196,MR2108963}).
    Thus, knowing that
    	\begin{equation*}
		\mathcal{H}(  A_{n})= \nu I_{n} + \sum_{i=1}^d\left(v_{i,+}+v_{i,-}\right) I_{n_i^{-}} \otimes  	\mathcal{H}\left(L_{n_i}^{\left(\alpha_i\right)} \right) \otimes I_{n_i^{+}}, 
		\end{equation*}
    $\mathcal{H}(  A_{n})$ is also symmetric positive definite and the proof is complete.
\end{proof}

\cred{
\begin{proposition}\label{prop:main_second}
Let $f_{\boldsymbol{\alpha}}(\boldsymbol{\theta}) = \mathrm{Re}(f_{\boldsymbol{\alpha}}) + \mathbf{i} \mathrm{Im}(f_{\boldsymbol{\alpha}})$ be defined in \eqref{eqn:main_gen_function_second}. Then,
\[
\operatorname{essup}_{\boldsymbol{\theta} \in[-\pi, \pi]^d}\left|\frac{ \mathrm{Im}(f_{\boldsymbol{\alpha}})(\boldsymbol{\theta}) }{ \mathrm{Re}(f_{\boldsymbol{\alpha}}) (\boldsymbol{\theta}) }\right| \leq \max\limits_{1\leq i\leq d} \frac{  |d_{i,+}-d_{i,-}| }{d_{i,+}+d_{i,-}} \left|	\tan \left( \frac{\alpha_i}{2} \pi\right) \right|.
\]
\end{proposition}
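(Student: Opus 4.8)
The plan is to reduce the multivariate estimate to the univariate bound of Lemma~\ref{lemma:second_fun_property} via Lemma~\ref{lemma:wghtsumbdlem}. First I would write out the real and imaginary parts of $f_{\boldsymbol{\alpha}}$ explicitly. From \eqref{eqn:main_gen_function_second} we have
\[
f_{\boldsymbol{\alpha}}(\boldsymbol{\theta}) = \nu + \sum_{i=1}^d \left( v_{i,+} g_{\alpha_i}(\theta_i) + v_{i,-} g_{\alpha_i}(-\theta_i) \right),
\]
so that, since $\nu > 0$ is real,
\[
\mathrm{Re}(f_{\boldsymbol{\alpha}})(\boldsymbol{\theta}) = \nu + \sum_{i=1}^d \left( v_{i,+} \mathrm{Re}(g_{\alpha_i})(\theta_i) + v_{i,-} \mathrm{Re}(g_{\alpha_i})(-\theta_i) \right), \qquad
\mathrm{Im}(f_{\boldsymbol{\alpha}})(\boldsymbol{\theta}) = \sum_{i=1}^d \left( v_{i,+} \mathrm{Im}(g_{\alpha_i})(\theta_i) + v_{i,-} \mathrm{Im}(g_{\alpha_i})(-\theta_i) \right).
\]
The key structural observation, read off from the explicit formula \eqref{eqn:gen_fun_Vong}, is that $\mathrm{Re}(g_{\alpha_i})$ is even in $\theta_i$ while $\mathrm{Im}(g_{\alpha_i})$ is odd (this is the standard fact that the symmetric part of the Gr\"unwald Toeplitz matrix is generated by an even function and the skew-symmetric part by an odd one). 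Hence $v_{i,+}\mathrm{Re}(g_{\alpha_i})(\theta_i) + v_{i,-}\mathrm{Re}(g_{\alpha_i})(-\theta_i) = (v_{i,+}+v_{i,-})\mathrm{Re}(g_{\alpha_i})(\theta_i)$ and $v_{i,+}\mathrm{Im}(g_{\alpha_i})(\theta_i) + v_{i,-}\mathrm{Im}(g_{\alpha_i})(-\theta_i) = (v_{i,+}-v_{i,-})\mathrm{Im}(g_{\alpha_i})(\theta_i)$.

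Next I would apply Lemma~\ref{lemma:wghtsumbdlem}. Set $\zeta_i := (v_{i,+}+v_{i,-})\mathrm{Re}(g_{\alpha_i})(\theta_i) > 0$ (strictly positive since $\mathrm{Re}(g_{\alpha_i})$ is essentially positive, as used in Lemma~\ref{lemma:PS_SPD_second}) for $1\le i\le d$, and augment with a $(d+1)$-th term $\zeta_{d+1} := \nu > 0$; correspondingly set $\xi_i := |v_{i,+}-v_{i,-}|\,|\mathrm{Im}(g_{\alpha_i})(\theta_i)| \ge 0$ and $\xi_{d+1} := 0$. Then, pointwise in $\boldsymbol{\theta}$,
\[
\left| \frac{\mathrm{Im}(f_{\boldsymbol{\alpha}})(\boldsymbol{\theta})}{\mathrm{Re}(f_{\boldsymbol{\alpha}})(\boldsymbol{\theta})} \right|
\le \frac{\sum_{i=1}^d |v_{i,+}-v_{i,-}|\,|\mathrm{Im}(g_{\alpha_i})(\theta_i)|}{\nu + \sum_{i=1}^d (v_{i,+}+v_{i,-})\mathrm{Re}(g_{\alpha_i})(\theta_i)}
= \frac{\sum_{i=1}^{d+1}\xi_i}{\sum_{i=1}^{d+1}\zeta_i}
\le \max_{1\le i\le d+1}\frac{\xi_i}{\zeta_i}
= \max_{1\le i\le d}\frac{|v_{i,+}-v_{i,-}|}{(v_{i,+}+v_{i,-})}\cdot\frac{|\mathrm{Im}(g_{\alpha_i})(\theta_i)|}{\mathrm{Re}(g_{\alpha_i})(\theta_i)},
\]
where the triangle inequality in the numerator and the positivity of every $\zeta_i$ justify the first inequality, and the $(d+1)$-th ratio $0/\nu$ is harmless in the maximum. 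Finally, bounding each $\frac{|\mathrm{Im}(g_{\alpha_i})(\theta_i)|}{\mathrm{Re}(g_{\alpha_i})(\theta_i)} \le \operatorname{essup}_{\theta}\left|\frac{\mathrm{Im}(g_{\alpha_i})(\theta)}{\mathrm{Re}(g_{\alpha_i})(\theta)}\right| = |\tan(\tfrac{\alpha_i}{2}\pi)|$ by Lemma~\ref{lemma:second_fun_property}, and recalling $v_{i,\pm} = d_{i,\pm}/(2h_i^{\alpha_i})$ so that $\frac{|v_{i,+}-v_{i,-}|}{v_{i,+}+v_{i,-}} = \frac{|d_{i,+}-d_{i,-}|}{d_{i,+}+d_{i,-}}$, taking the essential supremum over $\boldsymbol{\theta}$ gives the claimed inequality.

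The main obstacle I anticipate is handling the ratio cleanly when some $d_{i,\pm}$ vanish or when $\mathrm{Re}(g_{\alpha_i})(\theta_i)$ approaches $0$ (which happens as $\theta_i \to 0$, where $g_{\alpha_i}$ has its fractional-order zero). The presence of the $\nu I_n$ term is precisely what saves the denominator from vanishing, so it is essential to keep the $\zeta_{d+1}=\nu$ slot in the application of Lemma~\ref{lemma:wghtsumbdlem}; one should also note that the inequality in Lemma~\ref{lemma:wghtsumbdlem} tolerates $\xi_i = 0$ but requires $\zeta_i > 0$, which is why essential positivity of $\mathrm{Re}(g_{\alpha_i})$ (rather than mere nonnegativity) matters. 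A minor point to address is the degenerate case $d_{i,+}=d_{i,-}=0$ for some $i$, in which that index simply drops out of both sums and can be omitted from the maximum; and the case $\alpha_i \to 2$ where $\tan(\tfrac{\alpha_i}{2}\pi)$ blows up, though this is outside the assumed range $\alpha_i \in (1,2)$ and need not be treated. Everything else is the routine even/odd bookkeeping and a single application of each cited lemma.
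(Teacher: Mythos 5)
Your proposal is correct and follows essentially the same route as the paper's proof: establish the per-coordinate bound $\frac{|v_{i,+}-v_{i,-}|}{v_{i,+}+v_{i,-}}\left|\tan\left(\frac{\alpha_i}{2}\pi\right)\right|$ from Lemma \ref{lemma:second_fun_property}, then combine the triangle inequality in the numerator with Lemma \ref{lemma:wghtsumbdlem} to pass to the maximum over $i$. The only cosmetic differences are that the paper drops $\nu$ from the denominator before applying the lemma with $d$ terms, whereas you keep it as a $(d+1)$-th slot with zero numerator, and that your explicit even/odd parity argument is a clean way of justifying the per-coordinate inequality \eqref{ineq:imre_second}, which the paper asserts directly from Lemma \ref{lemma:second_fun_property}.
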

\begin{proof}
From Lemma \ref{lemma:second_fun_property}, we know that, for each $\theta_i \in (-\pi, \pi)/\{0\}$,
\begin{eqnarray}\label{ineq:imre_second}
    \left| \frac{ \mathrm{Im}(  v_{i,+}g_{\alpha_i}(\theta_i) + v_{i,-}g_{\alpha_i}(-\theta_i)  )  }{ \mathrm{Re}(  v_{i,+}g_{\alpha_i}(\theta_i) + v_{i,-}g_{\alpha_i}(-\theta_i))  } \right| \leq  \frac{| v_{i,+}-v_{i,-} |}{v_{i,+}+v_{i,-} } \left|\tan \left( \frac{\alpha_i}{2} \pi \right) \right|.
\end{eqnarray} 
Thus, we have
   \begin{eqnarray*}
    \left|\frac{ \mathrm{Im}(f_{\boldsymbol{\alpha}}) }{ \mathrm{Re}(f_{\boldsymbol{\alpha}}) }\right| &=& \frac{| \mathrm{Im}(\nu + \sum_{i=1}^d ( v_{i,+}g_{\alpha_i}(\theta_i) + v_{i,-}g_{\alpha_i}(-\theta_i)))  |}{| \mathrm{Re}(\nu + \sum_{i=1}^d ( v_{i,+}g_{\alpha_i}(\theta_i) + v_{i,-}g_{\alpha_i}(-\theta_i)))  |} \\
  &=& \frac{| \mathrm{Im}(\sum_{i=1}^d ( v_{i,+}g_{\alpha_i}(\theta_i) + v_{i,-}g_{\alpha_i}(-\theta_i)))  |}{| \nu + \mathrm{Re}( \sum_{i=1}^d ( v_{i,+}g_{\alpha_i}(\theta_i) + v_{i,-}g_{\alpha_i}(-\theta_i)))  |} \\
    &\leq& \frac{| \mathrm{Im}(\sum_{i=1}^d ( v_{i,+}g_{\alpha_i}(\theta_i) + v_{i,-}g_{\alpha_i}(-\theta_i)))  |}{|  \mathrm{Re}( \sum_{i=1}^d ( v_{i,+}g_{\alpha_i}(\theta_i) + v_{i,-}g_{\alpha_i}(-\theta_i)))  |} \\
    &\leq& \frac{ \sum_{i=1}^d |\mathrm{Im}(   v_{i,+}g_{\alpha_i}(\theta_i) + v_{i,-}g_{\alpha_i}(-\theta_i) )  |}{|  \mathrm{Re}( \sum_{i=1}^d ( v_{i,+}g_{\alpha_i}(\theta_i) + v_{i,-}g_{\alpha_i}(-\theta_i)))  |} \\
     % &\leq& \frac{ \sum_{i=1}^d |\mathrm{Im}(   v_{i,+}g_{\alpha_i}(\theta_i) + v_{i,-}g_{\alpha_i}(-\theta_i) )  |}{|  \mathrm{Re}( \sum_{i=1}^d ( v_{i,+}g_{\alpha_i}(\theta_i) + v_{i,-}g_{\alpha_i}(-\theta_i)))  |} \\   
  &=& \frac{ \sum_{i=1}^d |\mathrm{Im}(   v_{i,+}g_{\alpha_i}(\theta_i) + v_{i,-}g_{\alpha_i}(-\theta_i) )  |}{ \sum_{i=1}^d | \mathrm{Re}(  v_{i,+}g_{\alpha_i}(\theta_i) + v_{i,-}g_{\alpha_i}(-\theta_i))  |}.
    \end{eqnarray*}

Clearly, for each $\theta_i \in (-\pi, \pi)/\{0\}$, $|\mathrm{Im}(   v_{i,+}g_{\alpha_i}(\theta_i) + v_{i,-}g_{\alpha_i}(-\theta_i) )  |$ is nonnegative and $|\mathrm{Re}(   v_{i,+}g_{\alpha_i}(\theta_i) + v_{i,-}g_{\alpha_i}(-\theta_i) )  |$ is positive by \cite[Theorem 2]{Tian2015}. Thus, Lemma \ref{lemma:wghtsumbdlem} is applicable to estimating $\left|{ \mathrm{Im}(f_{\boldsymbol{\alpha}}) }/{ \mathrm{Re}(f_{\boldsymbol{\alpha}}) }\right|$. Thus, we have

 \begin{eqnarray*}
    \left|\frac{ \mathrm{Im}(f_{\boldsymbol{\alpha}}) }{ \mathrm{Re}(f_{\boldsymbol{\alpha}}) }\right|  &\leq& \max\limits_{1\leq i\leq d}\frac{ | \mathrm{Im}(  v_{i,+}g_{\alpha_i}(\theta_i) + v_{i,-}g_{\alpha_i}(-\theta_i))   |}{| \mathrm{Re}(  v_{i,+}g_{\alpha_i}(\theta_i) + v_{i,-}g_{\alpha_i}(-\theta_i))  |}  \\
  &\leq& \max\limits_{1\leq i\leq d} \frac{  |v_{i,+}-v_{i,-}| }{v_{i,+}+v_{i,-}} \left|	\tan \left( \frac{\alpha_i}{2} \pi \right) \right|\\
    &=& \max\limits_{1\leq i\leq d} \frac{  |d_{i,+}-d_{i,-}| }{d_{i,+}+d_{i,-}} \left|	\tan \left( \frac{\alpha_i}{2} \pi \right) \right|,
    \end{eqnarray*}
    where the last inequality is due to \eqref{ineq:imre_second}. The proof is complete.
\end{proof}
}

Combining Lemma \ref{lemma:pestana} and Proposition \ref{prop:main_second}, the following proposition follows.

\begin{proposition}\label{proposition:pestana}
Let $A_{n}$ be the matrix defined in \eqref{eqn:main_matrix_second} and let $\mathcal{H}(  A_{n})=(A_{n}+A_{n}^{\top} ) / 2$. Then, the eigenvalues of $\mathcal{H}(  A_{n})^{-1}  Y_{n} A_{n}$ lie in $[-(1+\epsilon), -1] \cup [1, 1+\epsilon]$, where
$$
\epsilon < \max\limits_{1\leq i\leq d} \frac{  |d_{i,+}-d_{i,-}| }{d_{i,+}+d_{i,-}} \left|	\tan \left( \frac{\alpha_i}{2}\pi \right) \right|.
$$
\end{proposition}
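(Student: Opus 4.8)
The plan is to apply Lemma~\ref{lemma:pestana} to the matrix $A_{n}$, whose generating function is $f_{\boldsymbol{\alpha}}$ defined in \eqref{eqn:main_gen_function_second}. To invoke that lemma, two hypotheses must be checked: first, that $f_{\boldsymbol{\alpha}} \in L^1([-\pi,\pi]^d)$, and second, that $\mathrm{Re}(f_{\boldsymbol{\alpha}})$ is essentially positive. Integrability is immediate since $g_{\alpha_i}$ has at worst an integrable algebraic singularity of order $\alpha_i < 2$ at the origin (from the factor $(2\sin\frac{\theta}{2})^{\alpha_i}$ in Lemma~\ref{lemma:second_fun_property}), and a finite sum of such functions plus the constant $\nu$ remains in $L^1$. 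The essential positivity of $\mathrm{Re}(f_{\boldsymbol{\alpha}})$ follows because $\mathrm{Re}(f_{\boldsymbol{\alpha}}) = \nu + \sum_{i=1}^d \mathrm{Re}(v_{i,+}g_{\alpha_i}(\theta_i) + v_{i,-}g_{\alpha_i}(-\theta_i))$, where $\nu = 1/\tau > 0$ and each summand $\mathrm{Re}(v_{i,+}g_{\alpha_i}(\theta_i) + v_{i,-}g_{\alpha_i}(-\theta_i))$ is nonnegative (indeed positive a.e.) by \cite[Theorem 2]{Tian2015}, as already noted in the proof of Proposition~\ref{prop:main_second}. Hence $\mathrm{Re}(f_{\boldsymbol{\alpha}}) \geq \nu > 0$ essentially, so the hypotheses of Lemma~\ref{lemma:pestana} hold. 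Equivalently, one may simply cite Lemma~\ref{lemma:PS_SPD_second}, which already establishes that $\mathcal{H}(A_n) = T_n[\mathrm{Re}(f_{\boldsymbol{\alpha}})]$ is symmetric positive definite.

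With the hypotheses verified, Lemma~\ref{lemma:pestana} yields directly that the eigenvalues of $\mathcal{H}(A_n)^{-1} Y_n A_n$ lie in $[-1-\epsilon,-1] \cup [1,1+\epsilon]$ with
$\epsilon < \operatorname{essup}_{\boldsymbol{\theta} \in [-\pi,\pi]^d} \left| \frac{\mathrm{Im}(f_{\boldsymbol{\alpha}})(\boldsymbol{\theta})}{\mathrm{Re}(f_{\boldsymbol{\alpha}})(\boldsymbol{\theta})} \right|$.
It then remains only to bound this essential supremum, and this is exactly the content of Proposition~\ref{prop:main_second}, which gives
$\operatorname{essup}_{\boldsymbol{\theta}} \left| \frac{\mathrm{Im}(f_{\boldsymbol{\alpha}})}{\mathrm{Re}(f_{\boldsymbol{\alpha}})} \right| \leq \max_{1 \leq i \leq d} \frac{|d_{i,+}-d_{i,-}|}{d_{i,+}+d_{i,-}} \left| \tan\left(\frac{\alpha_i}{2}\pi\right) \right|$.
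Chaining these two facts, we may take $\epsilon$ to be any number strictly less than $\max_{1 \leq i \leq d} \frac{|d_{i,+}-d_{i,-}|}{d_{i,+}+d_{i,-}} \left| \tan\left(\frac{\alpha_i}{2}\pi\right) \right|$, and the claimed eigenvalue localization follows.

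Since almost all the work has been front-loaded into Lemma~\ref{lemma:pestana} and Proposition~\ref{prop:main_second}, the proof of Proposition~\ref{proposition:pestana} is essentially a one-line synthesis; the only genuinely substantive point one must not gloss over is confirming that $A_n$ really is the multilevel Toeplitz matrix generated by $f_{\boldsymbol{\alpha}}$ in the exact sense required by Lemma~\ref{lemma:pestana} (real entries, generating function with essentially positive real part), which was recorded in the discussion surrounding \eqref{eqn:gen_function_second}--\eqref{eqn:main_gen_function_second} and in Lemma~\ref{lemma:PS_SPD_second}. The main ``obstacle,'' such as it is, is purely bookkeeping: making sure the constant $\nu I_n$ is correctly attributed to the Fourier coefficient of the constant term $\nu$ in $f_{\boldsymbol{\alpha}}$ so that the identification $A_n = T_n[f_{\boldsymbol{\alpha}}]$ is literally correct, and that the additive constant $\nu > 0$ does not disturb — indeed only helps — the essential positivity of $\mathrm{Re}(f_{\boldsymbol{\alpha}})$.
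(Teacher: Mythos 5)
Your proposal is correct and follows exactly the route the paper takes: the paper derives Proposition \ref{proposition:pestana} by combining Lemma \ref{lemma:pestana} with the bound on $\operatorname{essup}\left|\mathrm{Im}(f_{\boldsymbol{\alpha}})/\mathrm{Re}(f_{\boldsymbol{\alpha}})\right|$ from Proposition \ref{prop:main_second}, with the essential positivity of $\mathrm{Re}(f_{\boldsymbol{\alpha}})$ already secured in Lemma \ref{lemma:PS_SPD_second}. Your additional checks of the hypotheses of Lemma \ref{lemma:pestana} are sound and simply make explicit what the paper leaves implicit.
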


\cred{Given that the preconditioner $\mathcal{H}(A_n)$, while effective, cannot generally be efficiently implemented, we shall henceforth direct our attention to the proposed practical preconditioner, $P_n$. This discussion aims to illustrate how $\mathcal{H}(A_n)$ can serve as a foundational blueprint, thereby enabling the advancement of further preconditioner development.}

\cred{
\begin{lemma}\cite[Lemma 3]{Tian2015}\label{lemma:property_w_2nd}
The coefficients in \eqref{eq:grunwaldmatrix_second} satisfy the following properties for $1<\alpha_i \leq 2$,
$$
\left\{\begin{array}{l}
w_0^{(\alpha_i)}=\frac{\alpha_i}{2},~  w_1^{(\alpha_i)}=\frac{2-\alpha_i-\alpha_i^2}{2}<0,~ w_2^{(\alpha_i)}=\frac{\alpha_i\left(\alpha_i^2+\alpha_i-4\right)}{4}, \\
1 \geq w_0^{(\alpha_i)} \geq w_3^{(\alpha_i)} \geq w_4^{(\alpha_i)} \geq \cdots \geq 0, \\
\sum_{k=0}^{\infty} w_k^{(\alpha_i)}=0, \sum_{k=0}^{n_i} w_k^{(\alpha_i)}<0,~{n_i} \geq 2.
\end{array}\right.
$$
\end{lemma}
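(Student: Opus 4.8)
The plan is to reduce the multilevel statement to the one‑dimensional weights and exploit their explicit product form; throughout I fix $i$ and abbreviate $\alpha=\alpha_i$, $g_k=g_k^{(\alpha)}$, $w_k=w_k^{(\alpha)}$. First I would solve the recursion $g_0=1$, $g_k=(1-\tfrac{\alpha+1}{k})g_{k-1}$, obtaining $g_k=(-1)^k\binom{\alpha}{k}$, hence $\sum_{k\ge0}g_kz^k=(1-z)^{\alpha}$ for $|z|<1$, and for $k\ge2$
\[
g_k=\frac{\alpha(\alpha-1)}{k!}\prod_{j=2}^{k-1}(j-\alpha)
\]
(empty product when $k=2$). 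For $1<\alpha\le2$ this gives $g_0=1>0$, $g_1=-\alpha<0$, $g_2=\tfrac{\alpha(\alpha-1)}{2}>0$, and—since every factor $j-\alpha$ with $j\ge2$ is nonnegative—$g_k\ge0$ for all $k\ge2$, together with $0\le g_{k+1}\le g_k$ for $k\ge2$ (because $g_{k+1}/g_k=\tfrac{k-\alpha}{k+1}\in[0,1)$).

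Next I would read off the three explicit values by substituting into $w_0=\tfrac{\alpha}{2}g_0$ and $w_k=\tfrac{\alpha}{2}g_k+\tfrac{2-\alpha}{2}g_{k-1}$ ($k\ge1$): this yields $w_0=\tfrac{\alpha}{2}$, $w_1=\tfrac{\alpha}{2}(-\alpha)+\tfrac{2-\alpha}{2}=\tfrac{2-\alpha-\alpha^{2}}{2}=-\tfrac{(\alpha+2)(\alpha-1)}{2}<0$ for $\alpha>1$, and $w_2=\tfrac{\alpha}{2}\cdot\tfrac{\alpha(\alpha-1)}{2}+\tfrac{2-\alpha}{2}(-\alpha)=\tfrac{\alpha(\alpha^{2}+\alpha-4)}{4}$. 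For the decay chain $1\ge w_0\ge w_3\ge w_4\ge\cdots\ge0$: $w_0=\tfrac{\alpha}{2}\le1$ since $\alpha\le2$; for $k\ge3$ the term $w_k=\tfrac{\alpha}{2}g_k+\tfrac{2-\alpha}{2}g_{k-1}$ is a nonnegative combination of the nonnegative $g_k$ and $g_{k-1}$ (note $k-1\ge2$), so $w_k\ge0$, with $w_k\to0$ because $g_k\to0$; monotonicity for $k\ge3$ follows from $w_k-w_{k+1}=\tfrac{\alpha}{2}(g_k-g_{k+1})+\tfrac{2-\alpha}{2}(g_{k-1}-g_k)\ge0$, using the decay of $g$. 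The one remaining link $w_0\ge w_3$ I would obtain from $g_3\le g_2$ via
\[
w_0-w_3=\tfrac{\alpha}{2}(1-g_3)-\tfrac{2-\alpha}{2}g_2\;\ge\;\tfrac{\alpha}{2}(1-g_2)-\tfrac{2-\alpha}{2}g_2=\tfrac{\alpha}{2}-g_2=\tfrac{\alpha(2-\alpha)}{2}\ge0 .
\]

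For the two summation statements, I would use the generating‑function identity $\sum_{k\ge0}w_kz^k=(\tfrac{\alpha}{2}+\tfrac{2-\alpha}{2}z)(1-z)^{\alpha}$; letting $z\to1^{-}$ gives $\sum_{k\ge0}w_k=0$, the passage to the limit being legitimate since $|g_k|=O(k^{-\alpha-1})$ ensures $\sum_k|w_k|<\infty$ for $\alpha>0$ (equivalently, use $\sum_{k=0}^{n}g_k=(-1)^n\binom{\alpha-1}{n}\to0$). Then, for $n_i\ge2$, from $\sum_{k\ge0}w_k=0$ we get $\sum_{k=0}^{n_i}w_k=-\sum_{k\ge n_i+1}w_k$; every index on the right satisfies $k\ge n_i+1\ge3$, so each $w_k\ge0$ by the previous step, and when $\alpha<2$ one has $w_k\ge\tfrac{2-\alpha}{2}g_{k-1}>0$ there, so the tail is strictly positive and $\sum_{k=0}^{n_i}w_k<0$. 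The computations are elementary; the only points that need care are the justification of $z\to1^{-}$ in the zero‑sum identity—i.e. the summability $\sum_k|w_k|<\infty$, resting on the classical asymptotics of $\binom{\alpha}{k}$—and the endpoint $\alpha_i=2$, where $w_k=0$ for $k\ge3$ and the last inequality degenerates to an equality, so strictness is exactly the regime $\alpha_i\in(1,2)$ assumed throughout. As an alternative to the tail argument, the partial‑sum formula $\sum_{k=0}^{n}g_k=(-1)^n\binom{\alpha-1}{n}$ (induction via Pascal's rule) gives $\sum_{k=0}^{n}w_k=\tfrac{\alpha}{2}(-1)^n\binom{\alpha-1}{n}+\tfrac{2-\alpha}{2}(-1)^{n-1}\binom{\alpha-1}{n-1}$, and a sign count of the factors of $\binom{\alpha-1}{n}$ for $\alpha\in(1,2)$ settles both summation claims at once.
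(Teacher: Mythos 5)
Your proposal is correct, but note that the paper itself offers no proof of this statement: it is imported verbatim as \cite[Lemma 3]{Tian2015}, so you are supplying a self-contained argument where the paper relies on a citation. Your route -- solving the recursion for $g_k^{(\alpha_i)}$ in closed product form, reading off signs and monotonicity of the $g_k^{(\alpha_i)}$ from the factors $j-\alpha_i$, and then transferring these to the $w_k^{(\alpha_i)}$ as nonnegative combinations -- is essentially the standard argument in Tian--Zhou--Deng, and all the individual steps check out: the three explicit values, the chain $1\ge w_0^{(\alpha_i)}\ge w_3^{(\alpha_i)}\ge\cdots\ge 0$ (including the slightly delicate link $w_0^{(\alpha_i)}\ge w_3^{(\alpha_i)}$ via $g_3\le g_2$ and $\tfrac{\alpha_i}{2}-g_2=\tfrac{\alpha_i(2-\alpha_i)}{2}\ge 0$), and the zero-sum identity via the generating function $\bigl(\tfrac{\alpha_i}{2}+\tfrac{2-\alpha_i}{2}z\bigr)(1-z)^{\alpha_i}$ with the summability $|g_k|=O(k^{-\alpha_i-1})$ justifying $z\to 1^-$. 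You are also right to flag the endpoint: at $\alpha_i=2$ one has $w_k^{(2)}=0$ for $k\ge 3$ and $\sum_{k=0}^{n_i}w_k^{(2)}=1-2+1=0$, so the strict inequality $\sum_{k=0}^{n_i}w_k^{(\alpha_i)}<0$ in the quoted statement genuinely fails there; this is harmless for the paper, which assumes $\alpha_i\in(1,2)$ throughout, but it is a real (if minor) imprecision in the lemma as transcribed, and your proof makes clear exactly where strictness comes from ($w_k^{(\alpha_i)}\ge\tfrac{2-\alpha_i}{2}g_{k-1}^{(\alpha_i)}>0$ on the tail). The alternative closed form $\sum_{k=0}^{n}w_k^{(\alpha_i)}=\tfrac{\alpha_i}{2}(-1)^n\binom{\alpha_i-1}{n}+\tfrac{2-\alpha_i}{2}(-1)^{n-1}\binom{\alpha_i-1}{n-1}$ is a nice bonus that settles both summation claims by a sign count.
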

}

\cred{
The following lemma guarantees the positive definiteness of $P_{n}$.

\begin{lemma}\label{lemma:PS_SPD_second}
The matrix $P_{n}$ defined in \eqref{eqn:main_krecondition_second} is symmetric positive definite.
\end{lemma}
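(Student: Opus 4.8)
The plan is to reduce the positive definiteness of $P_n$ to that of its one-dimensional building blocks $\tau\left(\mathcal{H}\left(L_{n_i}^{(\alpha_i)}\right)\right)$, exactly as was done for $\mathcal{H}(A_n)$ in the earlier Lemma. Since $P_n = \nu I_n + \sum_{i=1}^d (v_{i,+}+v_{i,-}) I_{n_i^{-}} \otimes \tau\left(\mathcal{H}\left(L_{n_i}^{(\alpha_i)}\right)\right) \otimes I_{n_i^{+}}$ with $\nu > 0$ and each weight $v_{i,\pm} \geq 0$, it suffices to show that each $\tau\left(\mathcal{H}\left(L_{n_i}^{(\alpha_i)}\right)\right)$ is symmetric positive semidefinite; then $P_n$ is a sum of a positive multiple of the identity and positive semidefinite Kronecker-structured terms, hence symmetric positive definite. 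Symmetry is immediate: $\mathcal{H}(\cdot)$ produces a symmetric Toeplitz matrix and $\tau(\cdot)$ of a symmetric Toeplitz matrix is symmetric (it is diagonalized by the sine transform, per \eqref{taumatdiag}).

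First I would identify $\mathcal{H}\left(L_{n_i}^{(\alpha_i)}\right)$ explicitly. From \eqref{eq:grunwaldmatrix_second}, $L_{n_i}^{(\alpha_i)}$ is the lower-Hessenberg Toeplitz matrix with first column $-(w_1^{(\alpha_i)}, w_2^{(\alpha_i)}, \dots, w_{n_i}^{(\alpha_i)})^\top$ and superdiagonal entry $-w_0^{(\alpha_i)}$; hence $\mathcal{H}\left(L_{n_i}^{(\alpha_i)}\right)$ is the symmetric Toeplitz matrix whose first row is $-\left(w_1^{(\alpha_i)}, \tfrac{w_0^{(\alpha_i)}+w_2^{(\alpha_i)}}{2}, \tfrac{w_3^{(\alpha_i)}}{2}, \tfrac{w_4^{(\alpha_i)}}{2}, \dots\right)$. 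Equivalently, $\mathcal{H}\left(L_{n_i}^{(\alpha_i)}\right) = T_{n_i}[h_{\alpha_i}]$ where $h_{\alpha_i}(\theta) = \mathrm{Re}\big(g_{\alpha_i}(\theta)+g_{\alpha_i}(-\theta)\big)$, which by Lemma \ref{lemma:second_fun_property} (and as already used in the earlier proof) is essentially positive, so $\mathcal{H}\left(L_{n_i}^{(\alpha_i)}\right)$ is itself symmetric positive definite. The remaining issue is that applying the Tau operator could in principle destroy positivity, since $\tau(T) = T - H$ subtracts a Hankel correction. I would therefore invoke the eigenvalue formula \eqref{sigmicomp}: the eigenvalues of $\tau\left(\mathcal{H}\left(L_{n_i}^{(\alpha_i)}\right)\right)$ are $q_\ell = t_1 + 2\sum_{j=2}^{n_i} t_j \cos\!\left(\tfrac{\pi \ell (j-1)}{n_i+1}\right)$, $\ell \in 1\wedge n_i$, with $(t_1, t_2, \dots) = -\left(w_1^{(\alpha_i)}, \tfrac{w_0^{(\alpha_i)}+w_2^{(\alpha_i)}}{2}, \tfrac{w_3^{(\alpha_i)}}{2}, \dots\right)$.

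The main obstacle is showing $q_\ell \geq 0$ for all $\ell$. I would argue via the sign structure guaranteed by Lemma \ref{lemma:property_w_2nd}. The off-diagonal coefficients $t_j$ for $j \geq 3$ equal $-\tfrac12 w_{j}^{(\alpha_i)} \leq 0$ (since $w_k^{(\alpha_i)} \geq 0$ for $k \geq 3$), and $t_2 = -\tfrac12(w_0^{(\alpha_i)} + w_2^{(\alpha_i)})$; a short computation with the explicit values in Lemma \ref{lemma:property_w_2nd} shows $w_0^{(\alpha_i)} + w_2^{(\alpha_i)} = \tfrac{\alpha_i}{2} + \tfrac{\alpha_i(\alpha_i^2+\alpha_i-4)}{4} = \tfrac{\alpha_i(\alpha_i^2+\alpha_i-2)}{4} = \tfrac{\alpha_i(\alpha_i+2)(\alpha_i-1)}{4} \geq 0$ for $\alpha_i \in (1,2]$, so $t_2 \leq 0$ as well. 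Thus all off-diagonal $t_j \leq 0$ while $t_1 = -w_1^{(\alpha_i)} > 0$, and from $\sum_{k=0}^{\infty} w_k^{(\alpha_i)} = 0$ one gets $t_1 + 2\sum_{j\geq 2} t_j$-type control; more precisely, using $\cos(\cdot) \geq -1$ and $t_j \leq 0$ we obtain $q_\ell \geq t_1 - 2\sum_{j=2}^{n_i}|t_j| \geq t_1 + 2\sum_{j=2}^{n_i} t_j$, and I would relate $\sum_{j=2}^{n_i} t_j$ to the partial Grünwald sums $\sum_{k=0}^{n_i} w_k^{(\alpha_i)} < 0$ from Lemma \ref{lemma:property_w_2nd} to conclude the bracket is nonnegative, giving $q_\ell \geq 0$. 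Once each $\tau\left(\mathcal{H}\left(L_{n_i}^{(\alpha_i)}\right)\right) \succeq 0$ is established, the Kronecker-sum structure together with $\nu I_n \succ 0$ finishes the proof. (Alternatively, one may cite that the Tau preconditioner constructed from an essentially positive even generating function is positive semidefinite, e.g. via the known spectral containment $\lambda_{\min}(\tau(T_m[h])) \geq \min h$ when $h \geq 0$; I would prefer the self-contained sign argument above.)
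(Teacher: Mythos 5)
Your proposal is correct and follows essentially the same route as the paper: both reduce positive definiteness of $P_n$ to that of each $\tau\left(\mathcal{H}\left(L_{n_i}^{(\alpha_i)}\right)\right)$, verify $w_0^{(\alpha_i)}+w_2^{(\alpha_i)}>0$ by the same explicit factorization, and then bound the eigenvalues via the formula \eqref{sigmicomp} using $|\cos|\le 1$, the sign pattern of the $w_k^{(\alpha_i)}$ from Lemma \ref{lemma:property_w_2nd}, and the strictly negative partial sums $\sum_{k=0}^{n_i}w_k^{(\alpha_i)}<0$. The only cosmetic difference is that you bound the eigenvalues of $\tau\left(\mathcal{H}\left(L_{n_i}^{(\alpha_i)}\right)\right)$ from below while the paper bounds those of its negative from above, and your bound in fact yields strict positivity, not merely semidefiniteness.
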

\begin{proof}
From \eqref{eq:grunwaldmatrix_second}, the first column of $\mathcal{H}\left({L}_{n_i}^{\left(\alpha_i\right)} \right)$ is
$$-\left[2w_1^{(\alpha_i)}, w_0^{(\alpha_i)}+w_2^{(\alpha_i)}, w_3^{(\alpha_i)}, \dots, w_{n_i}^{(\alpha_i)}\right].$$
Consider a function defined as $f\left(\alpha_i\right)=w_0^{\left(\alpha_i\right)}+w_2^{\left(\alpha_i\right)}=\frac{1}{4}\left(2 \alpha_i+\alpha_i^3+\alpha_i^2-4 \alpha_i\right)$. For values of $\alpha_i$ within the interval $(1,2)$, it is evident that the function $f\left(\alpha_i\right)$ increases within this range, signifying that $f\left(\alpha_i\right)\geq f(1)=0$. Consequently, it can be inferred that $w_0^{(\alpha_i)} + w_2^{(\alpha_i)}$ is also greater than zero.
Taking advantage of \eqref{sigmicomp}, the $j$-th eigenvalue of $-\tau\left(\mathcal{H}\left({L}_{n_i}^{\left(\alpha_i\right)} \right)\right)$ can be expressed as
$$
{
\begin{aligned}
\lambda_j & =2 w_1^{(\alpha_i)}+2\left(w_0^{(\alpha_i)}+w_2^{(\alpha_i)}\right) \cos \left(\frac{\pi j}{n_i+1}\right)+2 \sum_{k=3}^{n_i} w_k^{(\alpha_i)} \cos \left(\frac{\pi j(k-1)}{n_i+1}\right) \\
& \leq 2 w_1^{(\alpha_i)}+2\left(w_0^{(\alpha_i)}+w_2^{(\alpha_i)}\right)+2 \sum_{k=3}^{n_i} w_k^{(\alpha_i)} \\
& =2 \sum_{k=0}^{n_i} w_k^{(\alpha_i)}<0 .
\end{aligned}
}
$$
Therefore, $\tau\left(\mathcal{H}\left({L}_{n_i}^{\left(\alpha_i\right)} \right)\right)$ is positive definite, which further implies that $P_n$ is positive definite. The proof is complete.
\end{proof}
}
\cred{
Lemma \ref{lemma:propertyeigtau} plays a crucial role in our convergence analysis.

\begin{lemma}\cite{Huang_2022}\label{lemma:propertyeigtau}
  Let $T_n=\left[t_{|i-j|}\right]$ be a symmetric Toeplitz matrix and $\tau\left(T_n\right)$ be the corresponding $\tau$ matrix. If the entries of $T_n$ are equipped with the following properties,
$$
t_0>0, t_1<t_2<t_3<\cdots<t_{n-1}<0 \quad \text{and} \quad t_0+2 \sum_{i=1}^n t_i>0 \text {, }
$$
or
$$
t_0<0, t_1>t_2>t_3>\cdots>t_{n-1}>0 \quad \text{and}\quad t_0+2 \sum_{i=1}^n t_i<0,
$$
the eigenvalues of matrix $\tau\left(T_n\right)^{-1} T_n$ satisfy
$$
1 / 2<\lambda\left(\tau\left(T_n\right)^{-1} T_n\right)<3 / 2.
$$  
\end{lemma}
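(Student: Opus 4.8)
The plan is to work from the identity $\tau(T_n)=T_n-H_n$, where $H_n$ is the Hankel correction in \eqref{tauopdef}, so that $\tau(T_n)^{-1}T_n=I+\tau(T_n)^{-1}H_n$. Once $\tau(T_n)$ has been shown to be definite, the congruence $T_n\mapsto\tau(T_n)^{-1/2}T_n\tau(T_n)^{-1/2}$ turns the assertion $\lambda(\tau(T_n)^{-1}T_n)\in(1/2,3/2)$ into the pair of Loewner inequalities $\tfrac12\tau(T_n)\prec T_n\prec\tfrac32\tau(T_n)$, i.e.\ (using $\tau(T_n)=T_n-H_n$) into
\[
T_n+H_n\succ0\qquad\text{and}\qquad T_n-3H_n\succ0,
\]
equivalently $\tau(T_n)\prec 2T_n\prec 3\tau(T_n)$. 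Before that, I would reduce the second hypothesis list to the first by the substitution $T_n\mapsto -T_n$: this negates $\tau(T_n)$ and every $t_i$, leaves $\tau(T_n)^{-1}T_n$ unchanged, and sends the $t_0<0$ hypotheses to the $t_0>0$ ones. So from now on assume $t_0>0$, $t_1<t_2<\cdots<t_{n-1}<0$, and $t_0+2\sum_i t_i>0$.

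Next I would handle the easy ingredients. By \eqref{taumatdiag}--\eqref{sigmicomp} (written in the present convention, with $t_0$ the diagonal entry) the eigenvalues of $\tau(T_n)$ are $q_k=t_0+2\sum_{j=1}^{n-1}t_j\cos\!\big(\tfrac{jk\pi}{n+1}\big)$; since each $t_j<0$ and $\cos(\cdot)\le 1$, we get $q_k\ge t_0+2\sum_{j=1}^{n-1}t_j>0$, hence $\tau(T_n)\succ0$. The same one-line estimate applied to the generating function $f(\theta)=t_0+2\sum_{j=1}^{n-1}t_j\cos(j\theta)$ of $T_n$ gives $f\ge t_0+2\sum_j t_j>0$ everywhere, so $T_n=T_n[f]\succ0$ as well (this also follows a posteriori from $T_n\succ\tfrac12\tau(T_n)$).

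The crux is the pair $T_n+H_n\succ0$ and $T_n-3H_n\succ0$, and this is where the monotonicity $t_1<t_2<\cdots<t_{n-1}<0$, not merely the sign of the $t_j$, must be used in full. My plan is to pass to the sine transform: writing $x=S_ny$ gives $x^\top\tau(T_n)x=\sum_k q_k y_k^2$ and $x^\top H_n x=y^\top (S_nH_nS_n)y$, and since $Y_nH_n$ is a Toeplitz matrix and $S_nY_n=\operatorname{diag}((-1)^{k+1})S_n$, one can bring $x^\top H_n x$ into a form directly comparable with $\sum_k q_k y_k^2$ and then estimate it by summation by parts, exploiting the monotone decay of the coefficients. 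A probably cleaner variant is to recognize $T_n+H_n$ (and the analogous matrix for the other inequality) as a member of a cosine-transform algebra up to a correction supported on a few anti-diagonals, so that up to that correction its eigenvalues are $f$ sampled on a grid and hence $\ge\min_\theta f>0$, the correction being absorbed using the sign and monotonicity of the $t_j$. The hard part is pinning the constants down to exactly $1$ and $3$ rather than to $n$-dependent quantities; the factor $3$ in $T_n-3H_n\succ0$ is the delicate one, and the hypothesis $t_0+2\sum_i t_i>0$ together with the full chain $t_1<\cdots<t_{n-1}<0$ is precisely what makes it work (for $n=3$ one can verify both inequalities by a direct $2\times2$ eigenvalue computation and see that nothing weaker suffices). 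Having established both inequalities, combining them with $\tau(T_n)\succ0$ yields $\tfrac12<\lambda(\tau(T_n)^{-1}T_n)<\tfrac32$.
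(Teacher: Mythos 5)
First, a point of reference: the paper does not prove this lemma at all --- it is quoted verbatim from \cite{Huang_2022} --- so there is no in-paper argument to compare yours against, and your attempt must stand on its own. The preliminaries in your proposal are correct: the reduction of the second hypothesis set to the first via $T_n\mapsto -T_n$; the positivity of $\tau(T_n)$ from $q_k=t_0+2\sum_{j=1}^{n-1}t_j\cos\bigl(\tfrac{jk\pi}{n+1}\bigr)\ge t_0+2\sum_j t_j>0$ (using $t_j<0$ and $\cos\le 1$); and the reformulation of the eigenvalue claim, given $\tau(T_n)\succ0$, as the pair of Loewner inequalities $T_n+H_n\succ0$ and $T_n-3H_n\succ0$, equivalently $|x^\top H_nx|<\tfrac12\,x^\top\tau(T_n)x$ for all $x\ne0$. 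This is the standard and correct framing.

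The genuine gap is that the crux is never proved. For the two definiteness inequalities you offer only two candidate strategies (sine transform plus summation by parts, or embedding $T_n+H_n$ into a trigonometric matrix algebra up to a correction), and you yourself concede that ``the hard part is pinning the constants down to exactly $1$ and $3$ rather than to $n$-dependent quantities.'' That hard part is the entire content of the lemma: what you have actually established is only that $\tau(T_n)$ and $T_n$ are each positive definite, which yields no two-sided bound on $\lambda(\tau(T_n)^{-1}T_n)$ whatsoever. The missing ingredient is a quantitative estimate of the Hankel quadratic form against the Tau (or Toeplitz) quadratic form; in \cite{Huang_2022} this is obtained by expanding $H_n$ over the elementary antidiagonal Hankel matrices $E_k$ (whose quadratic forms satisfy $|x^\top E_kx|=\bigl|\sum_{i+j=k+1}x_ix_j\bigr|\le\sum_{i\in I_k}x_i^2$ by $2|x_ix_j|\le x_i^2+x_j^2$), performing an Abel summation against the monotone chain $t_1<t_2<\cdots<t_{n-1}<0$, and comparing the result with $x^\top\tau(T_n)x$ term by term to get the factor $\tfrac12$ uniformly in $n$. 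Some argument of this kind --- which is precisely where the full monotonicity hypothesis and the condition $t_0+2\sum_i t_i>0$ enter --- is indispensable, and your sketch defers it entirely; a $2\times2$ verification for $n=3$ does not substitute for it.
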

}

\cred{
\begin{lemma}\label{lemma:tau_on_fractiona_mat_second}
Let $L_{n_i}^{(\alpha_i)}$ be the Toeplitz matrix defined in \eqref{eq:grunwaldmatrix_second}. Then, the eigenvalues of $\tau( \mathcal{H}( L_{n_i}^{(\alpha_i)}  )  )^{-1} \mathcal{H}( L_{n_i}^{(\alpha_i)}  )  $ lie in $(1/2, 3/2)$ for $\alpha_i \in (1, 2)$.
\end{lemma}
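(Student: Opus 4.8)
The plan is to verify that the symmetric Toeplitz matrix $\mathcal{H}\bigl(L_{n_i}^{(\alpha_i)}\bigr) = T_{n_i}\bigl[\mathrm{Re}(g_{\alpha_i}(\theta)+g_{\alpha_i}(-\theta))\bigr]$ satisfies one of the two sign/monotonicity patterns required by Lemma \ref{lemma:propertyeigtau}, so that the eigenvalue bound $(1/2,3/2)$ follows immediately. Writing $-\mathcal{H}\bigl(L_{n_i}^{(\alpha_i)}\bigr)$ as a symmetric Toeplitz matrix with first column $\bigl[t_0,t_1,\dots,t_{n_i-1}\bigr]^{\top}$, the entries (up to the overall minus sign) are $t_0 = 2w_1^{(\alpha_i)}$, $t_1 = w_0^{(\alpha_i)}+w_2^{(\alpha_i)}$, and $t_k = w_{k+1}^{(\alpha_i)}$ for $k\geq 2$. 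I would therefore check the second pattern in Lemma \ref{lemma:propertyeigtau}, namely $t_0 < 0$, $t_1 > t_2 > \cdots > t_{n_i-1} > 0$, and $t_0 + 2\sum_{k=1}^{n_i-1} t_k < 0$.

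Each of these three facts is essentially already assembled from the earlier lemmas. First, $t_0 = 2w_1^{(\alpha_i)} < 0$ by Lemma \ref{lemma:property_w_2nd}. Second, positivity of the tail: $t_1 = w_0^{(\alpha_i)}+w_2^{(\alpha_i)} > 0$ was shown in the proof of Lemma \ref{lemma:PS_SPD_second} (the function $f(\alpha_i) = w_0^{(\alpha_i)}+w_2^{(\alpha_i)}$ is increasing on $(1,2)$ with $f(1)=0$), and $t_k = w_{k+1}^{(\alpha_i)} \geq 0$ for $k\geq 2$ (indeed $> 0$ for the relevant range) by the chain $w_0^{(\alpha_i)} \geq w_3^{(\alpha_i)} \geq w_4^{(\alpha_i)} \geq \cdots \geq 0$ in Lemma \ref{lemma:property_w_2nd}. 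The monotonicity $t_1 > t_2 > t_3 > \cdots$ reduces to $w_0^{(\alpha_i)}+w_2^{(\alpha_i)} > w_3^{(\alpha_i)}$ together with $w_3^{(\alpha_i)} > w_4^{(\alpha_i)} > \cdots$; the latter is exactly the (strict version of the) decreasing chain in Lemma \ref{lemma:property_w_2nd}, and the former follows since $w_0^{(\alpha_i)} \geq w_3^{(\alpha_i)}$ and $w_2^{(\alpha_i)} > 0$ for $\alpha_i$ bounded away from the endpoint where $w_2^{(\alpha_i)}$ could vanish — this small case distinction is the one place needing a little care. Third, the sum condition: $t_0 + 2\sum_{k=1}^{n_i-1} t_k = 2w_1^{(\alpha_i)} + 2\bigl(w_0^{(\alpha_i)}+w_2^{(\alpha_i)}\bigr) + 2\sum_{k=3}^{n_i} w_k^{(\alpha_i)} = 2\sum_{k=0}^{n_i} w_k^{(\alpha_i)} < 0$ by the last line of Lemma \ref{lemma:property_w_2nd}; this is precisely the computation already performed in the proof of Lemma \ref{lemma:PS_SPD_second}.

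With all hypotheses of Lemma \ref{lemma:propertyeigtau} verified for the Toeplitz matrix $-\mathcal{H}\bigl(L_{n_i}^{(\alpha_i)}\bigr)$ (the overall sign is irrelevant since $\tau(-Z)^{-1}(-Z) = \tau(Z)^{-1}Z$), we conclude that the eigenvalues of $\tau\bigl(\mathcal{H}(L_{n_i}^{(\alpha_i)})\bigr)^{-1}\mathcal{H}(L_{n_i}^{(\alpha_i)})$ lie in $(1/2,3/2)$, completing the proof. The main obstacle — really the only nontrivial point — is confirming the strict monotonicity near the joint $t_1 > t_2$, i.e.\ that $w_0^{(\alpha_i)} + w_2^{(\alpha_i)} > w_3^{(\alpha_i)}$ strictly for all $\alpha_i \in (1,2)$; one handles this by noting $w_2^{(\alpha_i)} = \tfrac{\alpha_i(\alpha_i^2+\alpha_i-4)}{4}$ is strictly negative only in part of $(1,2)$ and combining the bound $w_0^{(\alpha_i)} \geq w_3^{(\alpha_i)}$ with an explicit lower estimate on $w_0^{(\alpha_i)} + w_2^{(\alpha_i)} - w_3^{(\alpha_i)}$, or alternatively by observing that $w_2^{(\alpha_i)} + w_0^{(\alpha_i)} > w_0^{(\alpha_i)} \ge w_3^{(\alpha_i)}$ whenever $w_2^{(\alpha_i)}>0$ and handling the remaining subinterval directly.
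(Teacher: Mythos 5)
Your proposal is correct and follows essentially the same route as the paper: verify the second sign/monotonicity pattern of Lemma \ref{lemma:propertyeigtau} for $\mathcal{H}\bigl(L_{n_i}^{(\alpha_i)}\bigr)$ using Lemma \ref{lemma:property_w_2nd}, with the only nontrivial joint being $w_0^{(\alpha_i)}+w_2^{(\alpha_i)}>w_3^{(\alpha_i)}$. The paper settles that point by the explicit factorization $w_0^{(\alpha_i)}+w_2^{(\alpha_i)}-w_3^{(\alpha_i)}=\tfrac{1}{12}\alpha_i^2(\alpha_i-1)(\alpha_i+4)>0$, which is precisely the ``explicit lower estimate'' you flagged as one way to close the case distinction.
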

\begin{proof}   
Noting that $1<\alpha_i<2$, we have
$$
w_0^{(\alpha_i)}+w_2^{(\alpha_i)}-w_3^{(\alpha_i)}=\frac{1}{12}\alpha_i^2(\alpha_i-1)(\alpha_i+4)>0,
$$
which implies $w_0^{(\alpha_i)}+w_2^{(\alpha_i)} > w_3^{(\alpha_i)}$. 
From the property of {$w_{k}^{(\alpha_i)}$} in Lemma \ref{lemma:property_w_2nd}, the coefficients {$w_k^{(\alpha_i)}$} satisfy 
$$
2w_1^{(\alpha_i)}<0, w_0^{(\alpha_i)}+w_2^{(\alpha_i)}>w_3^{(\alpha_i)}>w_4^{(\alpha_i)}>\cdots>0, \quad {2\sum_{k=0}^{n_i} w_k^{(\alpha_i)}<0}.
$$
From Lemma \ref{lemma:propertyeigtau}, we complete the proof.
\end{proof}
}
The following proposition indicates that $P_{n}$ and $\mathcal{H}( A_{n})$ are spectrally equivalent.

\begin{proposition}\label{prop:eigen_S_second}
Let $A_{n}, P_{n}$ be the matrices defined in \eqref{eqn:main_matrix_second} and \eqref{eqn:main_krecondition_second}, respectively. Then, the eigenvalues of $P_{n}^{-1}   \mathcal{H}( A_{n})$ lie in $(1/2, 3/2 )$.
\end{proposition}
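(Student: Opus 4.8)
The plan is to deduce the claim from the two-sided Loewner comparison
\[
\tfrac{1}{2}\,P_{n}\;\prec\;\mathcal{H}(A_{n})\;\prec\;\tfrac{3}{2}\,P_{n},
\]
which is equivalent to the stated eigenvalue localization because both $\mathcal{H}(A_{n})$ and $P_{n}$ are symmetric positive definite, as shown above. Indeed, $P_{n}^{-1}\mathcal{H}(A_{n})$ is similar to the symmetric matrix $P_{n}^{-1/2}\mathcal{H}(A_{n})P_{n}^{-1/2}$, so its spectrum is real, and $\tfrac{1}{2}P_{n}\prec\mathcal{H}(A_{n})\prec\tfrac{3}{2}P_{n}$ says precisely that this spectrum lies in $(1/2,3/2)$.

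The analytic input is Lemma~\ref{lemma:tau_on_fractiona_mat_second}: for each $i$, the eigenvalues of $\tau(\mathcal{H}(L_{n_i}^{(\alpha_i)}))^{-1}\mathcal{H}(L_{n_i}^{(\alpha_i)})$ lie in $(1/2,3/2)$, and since $\tau(\mathcal{H}(L_{n_i}^{(\alpha_i)}))$ is positive definite this is equivalent to
\[
\tfrac{1}{2}\,\tau\!\big(\mathcal{H}(L_{n_i}^{(\alpha_i)})\big)\;\prec\;\mathcal{H}(L_{n_i}^{(\alpha_i)})\;\prec\;\tfrac{3}{2}\,\tau\!\big(\mathcal{H}(L_{n_i}^{(\alpha_i)})\big),\qquad i=1,\dots,d.
\]
I would then lift this to the multilevel setting using three elementary properties of the Loewner order: (i) $B\prec C$ implies $I_{n_i^{-}}\otimes B\otimes I_{n_i^{+}}\prec I_{n_i^{-}}\otimes C\otimes I_{n_i^{+}}$ (the eigenvalues of the Kronecker difference are those of $C-B$, with multiplicity); (ii) scaling by the nonnegative number $v_{i,+}+v_{i,-}$ preserves $\preceq$; (iii) summing $\preceq$-comparable pairs, and adding the fixed matrix $\nu I_{n}$ to both sides, preserves the comparison. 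Using $P_{n}-\nu I_{n}=\sum_{i=1}^{d}(v_{i,+}+v_{i,-})\,I_{n_i^{-}}\otimes\tau(\mathcal{H}(L_{n_i}^{(\alpha_i)}))\otimes I_{n_i^{+}}$ and the analogous identity for $\mathcal{H}(A_{n})-\nu I_{n}$, this gives $\tfrac{1}{2}(P_{n}-\nu I_{n})\preceq \mathcal{H}(A_{n})-\nu I_{n}\preceq \tfrac{3}{2}(P_{n}-\nu I_{n})$. Rearranging, $\mathcal{H}(A_{n})\succeq \tfrac{1}{2}P_{n}+\tfrac{\nu}{2}I_{n}\succ\tfrac{1}{2}P_{n}$ and $\mathcal{H}(A_{n})\preceq\tfrac{3}{2}P_{n}-\tfrac{\nu}{2}I_{n}\prec\tfrac{3}{2}P_{n}$, the strict inequalities being free since $\nu=1/\tau>0$; no nondegeneracy hypothesis on the $d_{i,\pm}$ is needed. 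This establishes $\tfrac{1}{2}P_{n}\prec\mathcal{H}(A_{n})\prec\tfrac{3}{2}P_{n}$ and hence the proposition.

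I do not expect a genuine obstacle: the proposition is essentially a bookkeeping consequence of Lemma~\ref{lemma:tau_on_fractiona_mat_second}, whose analytic content — the sharp $(1/2,3/2)$ bound for the one-level $\tau$ preconditioner on $\mathcal{H}(L_{n_i}^{(\alpha_i)})$ — is already in place (it rests on the sign and monotonicity pattern of the weights $w_{k}^{(\alpha_i)}$ from Lemma~\ref{lemma:property_w_2nd} together with the $\tau$-eigenvalue criterion of Lemma~\ref{lemma:propertyeigtau}). The only mild point of care is a vanishing coefficient $v_{i,+}+v_{i,-}=0$, which the $\preceq$-based lifting above absorbs automatically; were one instead to mimic the Rayleigh-quotient / weighted-mean argument used for Proposition~\ref{prop:main_second} via Lemma~\ref{lemma:wghtsumbdlem}, one would restrict the weighted mean to the indices with $v_{i,+}+v_{i,-}>0$ and treat $\nu$ as an extra term of ratio exactly $1$, arriving at the same conclusion.
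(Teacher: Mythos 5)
Your proposal is correct and is essentially the paper's own argument in different clothing: the paper runs the same termwise comparison via Rayleigh quotients (bounding $\mathbf{w}^*\mathcal{H}(A_n)\mathbf{w}/\mathbf{w}^*P_n\mathbf{w}$ using Lemma~\ref{lemma:tau_on_fractiona_mat_second} for each Kronecker summand and then Lemma~\ref{lemma:wghtsumbdlem} to absorb the $\nu I_n$ term with ratio exactly $1$), which is precisely the quadratic-form statement of your Loewner-order lifting. Your closing remark even makes the strictness of the final bounds (coming from $\nu>0$) more explicit than the paper does.
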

\begin{proof}
    Let $(\lambda, \mathbf{w})$ be an arbitrary eigenpair of $P_{n}^{-1}   \mathcal{H}( A_{n})$. Then, it holds
    \begin{eqnarray*}
        \lambda &=& \frac{ \mathbf{w}^*    \mathcal{H}( A_{n}) \mathbf{w} }{\mathbf{w}^* P_{n} \mathbf{w}} \\
        &=&\frac{ \mathbf{w}^*    \left( \nu I_{n} + \sum_{i=1}^d\left(v_{i,+}+v_{i,-}\right) I_{n_i^{-}} \otimes 	\mathcal{H}\left(L_{n_i}^{\left(\alpha_i\right)} \right) \otimes I_{n_i^{+}}  \right) \mathbf{w} }{ \mathbf{w}^*    \left( \nu I_{n} + \sum_{i=1}^d\left(v_{i,+}+v_{i,-}\right) I_{n_i^{-}} \otimes \tau\left(	\mathcal{H}\left(L_{n_i}^{\left(\alpha_i\right)} \right)\right) \otimes I_{n_i^{+}}  \right) \mathbf{w} }.
    \end{eqnarray*}

    Now, combining Lemma \ref{lemma:tau_on_fractiona_mat_second} with the Rayleigh quotient theorem, we have
    \begin{eqnarray*}
        \frac{1}{2} &\leq&\lambda_{\min} \left( \tau( \mathcal{H}( L_{n_i}^{(\alpha_i)}  )  )^{-1}
        \mathcal{H}( L_{n_i}^{(\alpha_i)}  )  \right)  \\
        &\leq& 
\frac{ \mathbf{y}^*    \mathcal{H}( L_{n_i}^{(\alpha_i)} ) \mathbf{y} }{\mathbf{y}^* \tau( \mathcal{H}( L_{n_i}^{(\alpha_i)}  )  ) \mathbf{y}} \\
&\leq&  \lambda_{\max} \left( \tau( \mathcal{H}( L_{n_i}^{(\alpha_i)}  )  )^{-1} \mathcal{H}( L_{n_i}^{(\alpha_i)}  )  \right) \leq \frac{3}{2},
    \end{eqnarray*}
    for any nonzero vector $\mathbf{y}$.

    Then, we have
    \begin{eqnarray*}
        \frac{1}{2} &=& \frac{1}{2} \cdot \frac{ \mathbf{w}^*    \left( \sum_{i=1}^d\left(v_{i,+}+v_{i,-}\right) I_{n_i^{-}} \otimes 	\tau\left(	\mathcal{H}\left(L_{n_i}^{\left(\alpha_i\right)} \right)\right)  \otimes I_{n_i^{+}}  \right) \mathbf{w} }{ \mathbf{w}^*    \left(  \sum_{i=1}^d\left(v_{i,+}+v_{i,-}\right) I_{n_i^{-}} \otimes \tau\left(	\mathcal{H}\left(L_{n_i}^{\left(\alpha_i\right)} \right)\right) \otimes I_{n_i^{+}}  \right) \mathbf{w} } \\
        &\leq& \frac{ \mathbf{w}^*    \left( \sum_{i=1}^d\left(v_{i,+}+v_{i,-}\right) I_{n_i^{-}} \otimes 	\mathcal{H}\left(L_{n_i}^{\left(\alpha_i\right)} \right) \otimes I_{n_i^{+}}  \right) \mathbf{w} }{ \mathbf{w}^*    \left(  \sum_{i=1}^d\left(v_{i,+}+v_{i,-}\right) I_{n_i^{-}} \otimes \tau\left(	\mathcal{H}\left(L_{n_i}^{\left(\alpha_i\right)} \right)\right) \otimes I_{n_i^{+}}  \right) \mathbf{w} } \\
        &\leq& \frac{3}{2} \cdot \frac{ \mathbf{w}^*    \left( \sum_{i=1}^d\left(v_{i,+}+v_{i,-}\right) I_{n_i^{-}} \otimes 	\tau\left(	\mathcal{H}\left(L_{n_i}^{\left(\alpha_i\right)} \right)\right)  \otimes I_{n_i^{+}}  \right) \mathbf{w} }{ \mathbf{w}^*    \left(  \sum_{i=1}^d\left(v_{i,+}+v_{i,-}\right) I_{n_i^{-}} \otimes \tau\left(	\mathcal{H}\left(L_{n_i}^{\left(\alpha_i\right)} \right)\right) \otimes I_{n_i^{+}}  \right) \mathbf{w} } = \frac{3}{2}.
    \end{eqnarray*}

    By Lemma \ref{lemma:wghtsumbdlem}, it follows that
    \begin{eqnarray*}
        \frac{1}{2}&=&\min{\left\{1,\frac{1}{2}\right\} } \\
        &\leq& \frac{ \mathbf{w}^*    \left( \nu I_{n} + \sum_{i=1}^d\left(v_{i,+}+v_{i,-}\right) I_{n_i^{-}} \otimes 	\mathcal{H}\left(L_{n_i}^{\left(\alpha_i\right)} \right) \otimes I_{n_i^{+}}  \right) \mathbf{w} }{ \mathbf{w}^*    \left( \nu I_{n} + \sum_{i=1}^d\left(v_{i,+}+v_{i,-}\right) I_{n_i^{-}} \otimes \tau\left(	\mathcal{H}\left(L_{n_i}^{\left(\alpha_i\right)} \right)\right) \otimes I_{n_i^{+}}  \right) \mathbf{w} }\\
        &\leq&\max{\left\{ 1,\frac{3}{2}\right\} }=\frac{3}{2}, 
    \end{eqnarray*}
    which implies $\lambda \in (\frac{1}{2},\frac{3}{2})$. The proof is complete.
\end{proof}

The following theorem shows the preconditioning effectiveness of $P_{n}$ for $Y_{n} A_{n}$, \cred{as a consequence of Theorem \ref{thm:practical_MINRES_convergence} and Corollary \ref{cor:practical_MINRES_convergence}, by letting $\check{c}=1/2$ and $\hat{c}=3/2$.}

	\begin{theorem}\label{thm:main_theorem_second}
			Let $A_{n}, P_{n}$ be the matrices defined in \eqref{eqn:main_matrix_second} and \eqref{eqn:main_krecondition_second}, respectively. Then, the eigenvalues of $P_{n}^{-1}  Y_{n} A_{n}$ lie in $\left(-\frac{3}{2}(1+\epsilon), -\frac{1}{2}\right) \cup \left(\frac{1}{2}, \frac{3}{2}(1+\epsilon)\right)$, where
$$
\epsilon < \max\limits_{1\leq i\leq d} \frac{  |d_{i,+}-d_{i,-}| }{d_{i,+}+d_{i,-}} \left|	\tan \left( \frac{\alpha_i}{2}\pi \right) \right|.
$$
	\end{theorem}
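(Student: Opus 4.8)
The plan is to obtain the statement purely by assembling the two quantitative inputs already established: the spectral localization of the idealized preconditioned matrix $\mathcal{H}(A_n)^{-1}Y_nA_n$ (Proposition \ref{proposition:pestana}) and the spectral equivalence of the practical preconditioner $P_n$ with $\mathcal{H}(A_n)$ (Proposition \ref{prop:eigen_S_second}). This is precisely the abstract situation covered by Theorem \ref{thm:practical_MINRES_convergence} and Corollary \ref{cor:practical_MINRES_convergence}, so the proof reduces to verifying the hypotheses there with $f=f_{\boldsymbol{\alpha}}$, $\mathcal{P}=P_n$, $\check{c}=1/2$ and $\hat{c}=3/2$.

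First I would check the hypotheses. The generating function $f_{\boldsymbol{\alpha}}$ of \eqref{eqn:main_gen_function_second} lies in $L^1([-\pi,\pi]^d)$, decomposes as $\mathrm{Re}(f_{\boldsymbol{\alpha}})+\mathbf{i}\,\mathrm{Im}(f_{\boldsymbol{\alpha}})$, and $\mathrm{Re}(f_{\boldsymbol{\alpha}})=\nu+\sum_{i=1}^d(v_{i,+}+v_{i,-})\,\mathrm{Re}(g_{\alpha_i})$ is essentially positive by Lemma \ref{lemma:second_fun_property}. The preconditioner $P_n$ is symmetric positive definite by Lemma \ref{lemma:PS_SPD_second}, and Proposition \ref{prop:eigen_S_second} gives $1/2<\lambda_k(P_n^{-1}\mathcal{H}(A_n))<3/2$ for all $k$, so $P_n$ is spectrally equivalent to $\mathcal{H}(A_n)=\mathcal{H}(T_n[f_{\boldsymbol{\alpha}}])$ with constants $\check{c}=1/2$, $\hat{c}=3/2$. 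Thus all conditions of Theorem \ref{thm:practical_MINRES_convergence} are met.

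Then I would run the Ostrowski argument of Theorem \ref{thm:practical_MINRES_convergence} verbatim. Writing $P_n^{-1/2}Y_nA_nP_n^{-1/2}=W M W^{*}$ with $W=P_n^{-1/2}\mathcal{H}(A_n)^{1/2}$ and $M=\mathcal{H}(A_n)^{-1/2}Y_nA_n\mathcal{H}(A_n)^{-1/2}$ (which is real symmetric, since $Y_nA_n=Y_nT_n[f_{\boldsymbol{\alpha}}]$ is symmetric), Lemma \ref{lemma:Ostrowski} yields, for each $k$, a number $\theta_k\in(1/2,3/2)$ with $\lambda_k(P_n^{-1/2}Y_nA_nP_n^{-1/2})=\theta_k\lambda_k(M)$, because $WW^{*}=P_n^{-1/2}\mathcal{H}(A_n)P_n^{-1/2}$ is similar to $P_n^{-1}\mathcal{H}(A_n)$. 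By Proposition \ref{proposition:pestana}, $\lambda_k(M)=\lambda_k(\mathcal{H}(A_n)^{-1}Y_nA_n)\in[-(1+\epsilon),-1]\cup[1,1+\epsilon]$ with $\epsilon<\max_{1\leq i\leq d}\frac{|d_{i,+}-d_{i,-}|}{d_{i,+}+d_{i,-}}|\tan(\alpha_i\pi/2)|$. Multiplying the two ranges case by case — a positive $\lambda_k(M)$ gives $\theta_k\lambda_k(M)\in(1/2,\frac{3}{2}(1+\epsilon))$, a negative one gives $\theta_k\lambda_k(M)\in(-\frac{3}{2}(1+\epsilon),-1/2)$ — and using that $P_n^{-1}Y_nA_n$ is similar to $P_n^{-1/2}Y_nA_nP_n^{-1/2}$, hence has the same spectrum, the claimed inclusion follows for every eigenvalue, with no outliers.

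There is essentially no substantive obstacle, since every numerical ingredient has already been proved; the only point requiring a little care is the bookkeeping of open versus closed endpoints. The strictness in $\left(-\frac{3}{2}(1+\epsilon),-\frac{1}{2}\right)\cup\left(\frac{1}{2},\frac{3}{2}(1+\epsilon)\right)$ comes entirely from the open interval $(1/2,3/2)$ in Proposition \ref{prop:eigen_S_second}, so one should note that each $\theta_k$ is bounded strictly away from $1/2$ and $3/2$ and keep the product intervals open on the side inherited from $\theta_k$.
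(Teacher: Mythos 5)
Your proposal is correct and follows essentially the same route as the paper: the authors obtain Theorem \ref{thm:main_theorem_second} exactly by invoking Theorem \ref{thm:practical_MINRES_convergence} (the Ostrowski decomposition $P_n^{-1/2}Y_nA_nP_n^{-1/2}=W M W^{*}$ with $W=P_n^{-1/2}\mathcal{H}(A_n)^{1/2}$) together with Propositions \ref{proposition:pestana} and \ref{prop:eigen_S_second}, setting $\check{c}=1/2$ and $\hat{c}=3/2$. Your remark on tracking the open endpoints inherited from $\theta_k\in(1/2,3/2)$ is a point the paper leaves implicit, but it does not change the argument.
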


\begin{corollary}
    The preconditioned MINRES method for the system $Y_n A_n \mathbf{x}=Y_n \mathbf{b}$ in \eqref{eqn:preconditioned} has a convergence rate independent of $\mathbf{n}$, i.e., the residuals generated by the MINRES method satisfy
    $$
    \frac{\|\mathbf{r}^{(k)}\|_{2}}{\|\mathbf{r}_n^{(0)}\|_{2}}\leq 2\omega^{[k]},
    $$
    where $\mathbf{r}^{(k)} =  P_{n}^{-1}Y_{n} \mathbf{b} - P_{n}^{-1}  Y_{n} A_{n}\mathbf{\tilde{u}_* }^{(k)}$, $\mathbf{\tilde{u}_* }^{(k)}$ denotes the $k$-th iterate by MINRES, $\mathbf{\tilde{u}_* }^{(0)}$ denotes an arbitrary initial guess, and $\omega$ is a constant independent of $\mathbf{n}$ defined as follows
    $$
    \omega := \sqrt{\frac{2 + 3\epsilon }{4 + 3\epsilon}} \in \left(\sqrt{\frac{1}{2}}, 1 \right) \subset (0,1),
    $$
    with $\epsilon$ given by \cred{Theorem \ref{thm:main_theorem_second}}.
\end{corollary}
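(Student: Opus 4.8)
The plan is to derive this corollary as a direct consequence of Theorem \ref{thm:main_theorem_second} together with the classical MINRES convergence estimate \eqref{eqn:MINRES_bound}. The key observation is that Theorem \ref{thm:main_theorem_second} already places the spectrum of the preconditioned matrix $P_n^{-1}Y_n A_n$ in a union of two disjoint intervals symmetric-in-sign about the origin, namely $\left(-\frac{3}{2}(1+\epsilon),-\frac{1}{2}\right)\cup\left(\frac{1}{2},\frac{3}{2}(1+\epsilon)\right)$, with no outliers. Since $P_n$ is symmetric positive definite (Lemma \ref{lemma:PS_SPD_second}) and $Y_n A_n$ is symmetric, the preconditioned MINRES method is well-defined when applied in the standard way to $P_n^{-1}Y_n A_n \mathbf{x} = P_n^{-1}Y_n\mathbf{b}$ (equivalently, run on the symmetrically preconditioned system in the $P_n$-inner product), and its residuals obey the standard bound governed by the two enclosing intervals.

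First I would set $\check\beta := \tfrac{1}{2}$ and $\hat\beta := \tfrac{3}{2}(1+\epsilon)$, so that by Theorem \ref{thm:main_theorem_second} all eigenvalues of $P_n^{-1}Y_nA_n$ lie in $[-\hat\beta,-\check\beta]\cup[\check\beta,\hat\beta]$. Then I would invoke the MINRES bound \eqref{eqn:MINRES_bound}, which gives
$$
\frac{\|\mathbf{r}^{(k)}\|_2}{\|\mathbf{r}^{(0)}\|_2}\leq 2\left(\frac{\hat\beta/\check\beta-1}{\hat\beta/\check\beta+1}\right)^{[k/2]}.
$$
Next I would compute the ratio: $\hat\beta/\check\beta = 3(1+\epsilon)$, so
$$
\frac{\hat\beta/\check\beta-1}{\hat\beta/\check\beta+1}=\frac{3(1+\epsilon)-1}{3(1+\epsilon)+1}=\frac{2+3\epsilon}{4+3\epsilon}=:\omega^2,
$$
which identifies $\omega=\sqrt{(2+3\epsilon)/(4+3\epsilon)}$ and yields the stated estimate $\|\mathbf{r}^{(k)}\|_2/\|\mathbf{r}^{(0)}\|_2\leq 2\omega^{[k]}$ after absorbing the factor $[k/2]$ into $[k]$ via $\omega^{2[k/2]}\le \omega^{[k]}$ for $\omega\in(0,1)$ (or simply by adopting the convention used in Corollary \ref{cor:ideal_MINRES_convergence}). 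Finally, the inclusion $\omega\in(\sqrt{1/2},1)$ follows because the map $t\mapsto (2+3t)/(4+3t)$ is strictly increasing on $[0,\infty)$, equals $1/2$ at $t=0$, and tends to $1$ as $t\to\infty$; since $\epsilon>0$, we get $\omega^2\in(1/2,1)$, hence $\omega\in(\sqrt{1/2},1)\subset(0,1)$. Mesh-independence is then immediate: the bound $\epsilon<\max_{1\le i\le d}\frac{|d_{i,+}-d_{i,-}|}{d_{i,+}+d_{i,-}}\left|\tan\left(\frac{\alpha_i}{2}\pi\right)\right|$ depends only on the diffusion coefficients and fractional orders, not on $\mathbf{n}$, so $\omega$ is independent of $\mathbf{n}$.

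There is essentially no obstacle here; the only point requiring a line of care is the passage from the exponent $[k/2]$ in \eqref{eqn:MINRES_bound} to the exponent $[k]$ in the stated bound, which is handled exactly as in the proof of Corollary \ref{cor:ideal_MINRES_convergence}, and the verification that MINRES is legitimately applicable, which rests on $P_n\succ 0$ (Lemma \ref{lemma:PS_SPD_second}) and the symmetry of $Y_nA_n$. I would write the proof in two or three sentences, citing Theorem \ref{thm:main_theorem_second} for the eigenvalue localization and \cite{ElmanSilvesterWathen2004} (as already referenced around \eqref{eqn:MINRES_bound}) for the residual bound, then substituting $\check\beta=1/2$, $\hat\beta=\frac{3}{2}(1+\epsilon)$ and simplifying.
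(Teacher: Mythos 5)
Your proposal is correct and follows essentially the same route as the paper, which states this corollary as an immediate consequence of Theorem \ref{thm:main_theorem_second} (equivalently, of Corollary \ref{cor:practical_MINRES_convergence} with $\check{c}=1/2$, $\hat{c}=3/2$) combined with the MINRES bound \eqref{eqn:MINRES_bound}; your substitution $\check\beta=1/2$, $\hat\beta=\frac{3}{2}(1+\epsilon)$ and the resulting $\omega^2=(2+3\epsilon)/(4+3\epsilon)$ reproduce the paper's derivation exactly. One tiny caveat: your justification $\omega^{2[k/2]}\le\omega^{[k]}$ actually fails for odd $k$ (since $\omega<1$ gives $\omega^{k-1}>\omega^{k}$), but this is the same notational looseness the paper itself adopts in all its corollaries, so it does not constitute a gap relative to the paper's argument.
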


\subsection{\cred{Convergence analysis of the first-order scheme with $\widetilde{A}_{n}$}}
 
\cred{

In this subsection, we show that the MINRES method, when used with the preconditioner $\widetilde{P}_n$, also achieves optimal convergence in the first-order case with the shifted Gr\"unwald scheme. However, as discussed in Remark \ref{remark:first_order_precon}, the matrix $\widetilde{P}_n$ closely resembles the preconditioner proposed by \cite{Lin_etc_2023}, with only minimal differences. Consequently, we provide the relevant results for $\widetilde{P}_n$ below without further details.

\begin{lemma}\cite[Lemmas 3.2 \& 3.3]{PangSun16}\label{lemma:PangSun}
 Let $1 < \alpha <2$ and $\widetilde{L}_m^{(\alpha)}$ being given by \eqref{eq:grunwaldmatrix_first}. Then, the generating function of $\widetilde{L}_m^{(\alpha)}$ is given by
\begin{equation*}\label{eqn:gen_fun_PangSun}
\widetilde{g}_{\alpha}(\theta)=\left\{\begin{array}{cl}
    -\left(2 \sin \frac{\theta}{2}\right)^\alpha\left[ \cos \left(\frac{\alpha}{2}(\pi-\theta)+\theta\right)-\mathbf{i} \sin \left(\frac{\alpha}{2}(\pi-\theta)+\theta\right)\right], & \theta \in[0, \pi), \\
    -\left(2 \sin \frac{-\theta}{2}\right)^\alpha\left[ \cos \left(\frac{\alpha}{2}(\pi+\theta)-\theta\right)+\mathbf{i}  \sin \left(\frac{\alpha}{2}(\pi+\theta)-\theta\right)\right], & \theta \in(-\pi, 0) .
\end{array}\right.
\end{equation*}
Also,
$
\operatorname{essup}_{\theta \in[-\pi, \pi]} \left|\frac{\mathrm{Im}(\widetilde{g}_{\alpha})(\theta) }{\mathrm{Re}(\widetilde{g}_{\alpha})(\theta) }\right| = \left |\tan \left( \frac{\alpha}{2} \pi \right)\right |. 
$
\end{lemma}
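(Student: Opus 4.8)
The plan is to derive the piecewise trigonometric expression for $\widetilde{g}_\alpha$ directly from the closed form $\widetilde{g}_\alpha(\theta)=-e^{-\mathbf{i}\theta}(1-e^{\mathbf{i}\theta})^\alpha$ already recorded in the excerpt, by putting $1-e^{\mathbf{i}\theta}$ into polar coordinates. For $\theta\in(0,\pi)$ I would use the half-angle identity
\[
1-e^{\mathbf{i}\theta}=-2\mathbf{i}\sin\tfrac{\theta}{2}\,e^{\mathbf{i}\theta/2}=2\sin\tfrac{\theta}{2}\;e^{-\mathbf{i}(\pi-\theta)/2},
\]
where $\sin(\theta/2)>0$, so that the right-hand side is a genuine modulus--argument decomposition with argument in $(-\pi/2,0)$. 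Since $\mathrm{Re}(1-e^{\mathbf{i}\theta})=1-\cos\theta>0$ for every $\theta\ne 0$, the principal branch of $z^\alpha$ is well defined and continuous along the unit circle and agrees with the Gr\"unwald expansion; raising to the power $\alpha$ and multiplying by $-e^{-\mathbf{i}\theta}$ then gives
\[
\widetilde{g}_\alpha(\theta)=-\bigl(2\sin\tfrac{\theta}{2}\bigr)^\alpha e^{-\mathbf{i}\psi(\theta)},\qquad \psi(\theta):=\tfrac{\alpha}{2}(\pi-\theta)+\theta,
\]
which, after expanding $e^{-\mathbf{i}\psi}$ into $\cos\psi-\mathbf{i}\sin\psi$, is precisely the first branch in the statement. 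For $\theta\in(-\pi,0)$ I would avoid repeating the algebra and instead invoke that $\widetilde{L}_m^{(\alpha)}$ is a real matrix, so that $\widetilde{g}_\alpha(-\theta)=\overline{\widetilde{g}_\alpha(\theta)}$; substituting $\theta\mapsto-\theta$ into the first branch and using $\sin(-\theta/2)>0$ yields the second branch.

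For the $\operatorname{essup}$ identity, the key observation is that the polar form immediately gives $\mathrm{Re}(\widetilde{g}_\alpha)(\theta)=-(2\sin\tfrac{\theta}{2})^\alpha\cos\psi(\theta)$ and $\mathrm{Im}(\widetilde{g}_\alpha)(\theta)=(2\sin\tfrac{\theta}{2})^\alpha\sin\psi(\theta)$, so the quotient collapses to $\mathrm{Im}/\mathrm{Re}=-\tan\psi(\theta)$, hence $|\mathrm{Im}/\mathrm{Re}|=|\tan\psi(\theta)|$ for $\theta\ne 0$. Writing $\psi(\theta)=\tfrac{\alpha\pi}{2}+\tfrac{2-\alpha}{2}\theta$, one sees that $\psi$ is strictly increasing in $\theta$ when $\alpha\in(1,2)$ and maps $(0,\pi)$ onto $(\tfrac{\alpha\pi}{2},\pi)\subset(\tfrac{\pi}{2},\pi)$. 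Because $|\tan|$ is strictly decreasing on $(\tfrac{\pi}{2},\pi)$, the supremum of $|\tan\psi(\theta)|$ over $\theta\in(0,\pi)$ equals $|\tan(\tfrac{\alpha\pi}{2})|$, attained in the limit $\theta\to 0^+$; the conjugation symmetry delivers the same value on $(-\pi,0)$, and the isolated zero of $\widetilde{g}_\alpha$ at $\theta=0$ is negligible for an essential supremum. This yields $\operatorname{essup}_{\theta\in[-\pi,\pi]}|\mathrm{Im}(\widetilde{g}_\alpha)/\mathrm{Re}(\widetilde{g}_\alpha)|=|\tan(\tfrac{\alpha\pi}{2})|$.

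The only genuinely delicate part is the branch bookkeeping: one must keep track of the sign of $\sin(\theta/2)$ on each half-interval and confirm that the argument $\psi(\theta)$ never leaves $(\tfrac{\pi}{2},\pi)$, so that the monotonicity of $|\tan|$ can be invoked without the argument passing through the pole at $\tfrac{\pi}{2}$. Once that is settled, everything else reduces to a direct trigonometric identity.
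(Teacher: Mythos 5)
Your derivation is correct: the polar decomposition $1-e^{\mathbf{i}\theta}=2\sin\tfrac{\theta}{2}\,e^{-\mathbf{i}(\pi-\theta)/2}$, the conjugate symmetry $\widetilde{g}_\alpha(-\theta)=\overline{\widetilde{g}_\alpha(\theta)}$ for the negative branch, and the monotonicity of $\psi(\theta)=\tfrac{\alpha\pi}{2}+\tfrac{2-\alpha}{2}\theta$ on $(0,\pi)$ into $(\tfrac{\alpha\pi}{2},\pi)\subset(\tfrac{\pi}{2},\pi)$ all check out, and the branch bookkeeping you flag as delicate is handled properly. The paper itself offers no proof of this lemma --- it is imported verbatim from Pang and Sun --- and your argument is the standard one used there, so there is nothing to reconcile.
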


Similar to the second-order case with Lemma \ref{lemma:second_fun_property} replaced by Lemma \ref{lemma:PangSun}, we have the following results explaining the preconditioning effectiveness of the ideal preconditioner $\mathcal{H}(  \widetilde{A}_{n})$ for $Y_n \widetilde{A}_n \mathbf{x} = Y_n \mathbf{b}$, where $\mathcal{H}(  \widetilde{A}_{n})$ is precisely the ideal preconditioner proposed in \cite{Pestana2019}.

\begin{lemma}\label{lemma:PS_SPD}
Let $\widetilde{A}_{n}$ be the matrix defined in \eqref{eqn:main_matrix_first}. Then, the matrix $\mathcal{H}(  \widetilde{A}_{n})=(\widetilde{A}_{n}+\widetilde{A}_{n}^{\top} ) / 2$ is symmetric positive definite.
\end{lemma}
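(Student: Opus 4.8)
The plan is to follow the template of the second-order case in Lemma~\ref{lemma:PS_SPD_second}, with Lemma~\ref{lemma:second_fun_property} replaced by Lemma~\ref{lemma:PangSun}. Since $\mathcal{H}(\widetilde{W}_i)=\mathcal{H}(\widetilde{W}_i^{\top})=I_{n_i^{-}}\otimes\mathcal{H}(\widetilde{L}_{n_i}^{(\alpha_i)})\otimes I_{n_i^{+}}$, expression \eqref{eqn:main_matrix_first} gives
\[
\mathcal{H}(\widetilde{A}_n)=\nu I_n+\sum_{i=1}^{d}(\widetilde{v}_{i,+}+\widetilde{v}_{i,-})\,I_{n_i^{-}}\otimes\mathcal{H}(\widetilde{L}_{n_i}^{(\alpha_i)})\otimes I_{n_i^{+}},
\]
so it suffices to show each block $\mathcal{H}(\widetilde{L}_{n_i}^{(\alpha_i)})$ is symmetric positive definite; then, since $\nu=1/\tau>0$ and $\widetilde{v}_{i,\pm}=d_{i,\pm}/h_i^{\alpha_i}\ge0$, the matrix $\mathcal{H}(\widetilde{A}_n)$ is the sum of the positive definite matrix $\nu I_n$ and symmetric positive semidefinite Kronecker products, hence $\mathcal{H}(\widetilde{A}_n)\succeq\nu I_n\succ0$.

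For the block I would use the identity $T_{n_i}[g]^{\top}=T_{n_i}[g(-\cdot)]$ together with the fact, read off from the closed form in Lemma~\ref{lemma:PangSun}, that $\mathrm{Re}(\widetilde{g}_{\alpha_i})$ is even and $\mathrm{Im}(\widetilde{g}_{\alpha_i})$ is odd, which yields $\mathcal{H}(\widetilde{L}_{n_i}^{(\alpha_i)})=T_{n_i}[\tfrac12(\widetilde{g}_{\alpha_i}(\theta_i)+\widetilde{g}_{\alpha_i}(-\theta_i))]=T_{n_i}[\mathrm{Re}(\widetilde{g}_{\alpha_i})]$. The main (and essentially only genuine) step is then to verify that $\mathrm{Re}(\widetilde{g}_{\alpha_i})$ is essentially positive: from Lemma~\ref{lemma:PangSun}, for $\theta\in[0,\pi)$ one has $\mathrm{Re}(\widetilde{g}_{\alpha_i}(\theta))=-(2\sin\tfrac{\theta}{2})^{\alpha_i}\cos\!\big(\tfrac{\alpha_i}{2}(\pi-\theta)+\theta\big)$, and the cosine's argument equals $\tfrac{\alpha_i\pi}{2}+(1-\tfrac{\alpha_i}{2})\theta$, which for $\alpha_i\in(1,2)$ and $\theta\in[0,\pi)$ lies in $(\tfrac{\alpha_i\pi}{2},\pi)\subset(\tfrac{\pi}{2},\pi)$; hence the cosine is negative and the whole expression is positive for $\theta\ne0$, the values on $(-\pi,0)$ being handled by evenness. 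Thus $\mathrm{Re}(\widetilde{g}_{\alpha_i})$ is real-valued, even, nonnegative and not identically zero, so by the spectral properties of multilevel Toeplitz matrices recalled in Section~\ref{sec:prelim} the matrix $T_{n_i}[\mathrm{Re}(\widetilde{g}_{\alpha_i})]$ is (real) symmetric positive definite.

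The only obstacle is the sign analysis of $\cos\!\big(\tfrac{\alpha_i}{2}(\pi-\theta)+\theta\big)$, which is elementary once Lemma~\ref{lemma:PangSun} is available and is entirely parallel to the second-order argument; everything else is the routine Kronecker-sum bookkeeping of \eqref{eqn:main_matrix_first}. In fact, even if all diffusion coefficients vanished, $\mathcal{H}(\widetilde{A}_n)=\nu I_n$ would already be positive definite, so the diffusion blocks need only contribute symmetric positive semidefinite corrections, which is precisely what the symbol analysis delivers.
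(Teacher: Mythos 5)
Your proposal is correct and follows exactly the route the paper intends: the paper omits the proof, stating only that it is ``similar to the second-order case with Lemma \ref{lemma:second_fun_property} replaced by Lemma \ref{lemma:PangSun},'' and your argument is precisely that template --- reduce via the Kronecker structure to $\mathcal{H}(\widetilde{L}_{n_i}^{(\alpha_i)})=T_{n_i}[\mathrm{Re}(\widetilde{g}_{\alpha_i})]$ and verify that $\mathrm{Re}(\widetilde{g}_{\alpha_i})$ is nonnegative and not identically zero, which you do correctly by checking that $\tfrac{\alpha_i}{2}(\pi-\theta)+\theta\in[\tfrac{\alpha_i\pi}{2},\pi)\subset(\tfrac{\pi}{2},\pi)$ for $\theta\in[0,\pi)$ and $\alpha_i\in(1,2)$. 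You in fact supply more detail than the paper does, including the (valid) remark that the diffusion blocks only need to be positive semidefinite since $\nu I_n$ already contributes a strictly positive definite part.
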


\begin{proposition}\label{prop:main_first}
Let $\widetilde{f}_{\boldsymbol{\alpha}}(\boldsymbol{\theta}) = \mathrm{Re}(\widetilde{f}_{\boldsymbol{\alpha}}) + \mathbf{i} \mathrm{Im}(\widetilde{f}_{\boldsymbol{\alpha}})$ be defined in \eqref{eqn:main_gen_function_first}. Then,
\[
\operatorname{essup}_{\boldsymbol{\theta} \in[-\pi, \pi]^d}\left|\frac{ \mathrm{Im}(\widetilde{f}_{\boldsymbol{\alpha}})(\boldsymbol{\theta}) }{ \mathrm{Re}(\widetilde{f}_{\boldsymbol{\alpha}}) (\boldsymbol{\theta}) }\right| \leq \max\limits_{1\leq i\leq d} \frac{  |d_{i,+}-d_{i,-}| }{d_{i,+}+d_{i,-}} \left|	\tan \left( \frac{\alpha_i}{2} \pi\right) \right|.
\]
\end{proposition}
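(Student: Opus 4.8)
The plan is to mirror the proof of Proposition \ref{prop:main_second} almost verbatim, with the only substantive change being that the input Lemma \ref{lemma:second_fun_property} is replaced by Lemma \ref{lemma:PangSun}. First I would record the one-dimensional pointwise estimate: for each $\theta_i \in (-\pi,\pi)\setminus\{0\}$, the closed form of $\widetilde{g}_{\alpha_i}$ in Lemma \ref{lemma:PangSun}, together with the fact that $\widetilde{v}_{i,+}$ and $\widetilde{v}_{i,-}$ are nonnegative, gives
\[
\left| \frac{ \mathrm{Im}( \widetilde{v}_{i,+}\widetilde{g}_{\alpha_i}(\theta_i) + \widetilde{v}_{i,-}\widetilde{g}_{\alpha_i}(-\theta_i) ) }{ \mathrm{Re}( \widetilde{v}_{i,+}\widetilde{g}_{\alpha_i}(\theta_i) + \widetilde{v}_{i,-}\widetilde{g}_{\alpha_i}(-\theta_i) ) } \right| \leq \frac{|\widetilde{v}_{i,+} - \widetilde{v}_{i,-}|}{\widetilde{v}_{i,+} + \widetilde{v}_{i,-}} \left| \tan\left( \frac{\alpha_i}{2}\pi \right) \right|,
\]
which is the exact analog of \eqref{ineq:imre_second}. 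The derivation here is the same trigonometric manipulation that produces $\operatorname{essup}_{\theta}|\mathrm{Im}(\widetilde g_\alpha)/\mathrm{Re}(\widetilde g_\alpha)| = |\tan(\frac{\alpha}{2}\pi)|$ in Lemma \ref{lemma:PangSun}, now carried out with the two nonnegative weights attached; it relies on $\mathrm{Re}(\widetilde g_{\alpha_i}(\theta) + \widetilde g_{\alpha_i}(-\theta))$ being essentially positive so that the denominator does not vanish.

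Second, I would expand $\left|\mathrm{Im}(\widetilde f_{\boldsymbol\alpha})/\mathrm{Re}(\widetilde f_{\boldsymbol\alpha})\right|$ exactly as in the proof of Proposition \ref{prop:main_second}: since $\nu>0$ and each $\mathrm{Re}(\widetilde v_{i,+}\widetilde g_{\alpha_i}(\theta_i)+\widetilde v_{i,-}\widetilde g_{\alpha_i}(-\theta_i))$ is positive, dropping the $\nu$ term from the denominator only enlarges the quotient; then applying the triangle inequality to the numerator and using positivity of the real parts to remove the absolute value around the sum in the denominator yields
\[
\left|\frac{ \mathrm{Im}(\widetilde f_{\boldsymbol\alpha}) }{ \mathrm{Re}(\widetilde f_{\boldsymbol\alpha}) }\right| \leq \frac{ \sum_{i=1}^d | \mathrm{Im}( \widetilde v_{i,+}\widetilde g_{\alpha_i}(\theta_i) + \widetilde v_{i,-}\widetilde g_{\alpha_i}(-\theta_i) ) | }{ \sum_{i=1}^d | \mathrm{Re}( \widetilde v_{i,+}\widetilde g_{\alpha_i}(\theta_i) + \widetilde v_{i,-}\widetilde g_{\alpha_i}(-\theta_i) ) | }.
\]
This has precisely the form to which Lemma \ref{lemma:wghtsumbdlem} applies, with $\xi_i = |\mathrm{Im}(\cdots)| \geq 0$ and $\zeta_i = |\mathrm{Re}(\cdots)| > 0$, so the ratio is bounded by $\max_{1\le i\le d}\xi_i/\zeta_i$. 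Combining this with the one-dimensional estimate above and then substituting $\widetilde v_{i,\pm} = d_{i,\pm}/h_i^{\alpha_i}$ (which cancels the common factor $h_i^{-\alpha_i}$, so that $|\widetilde v_{i,+}-\widetilde v_{i,-}|/(\widetilde v_{i,+}+\widetilde v_{i,-}) = |d_{i,+}-d_{i,-}|/(d_{i,+}+d_{i,-})$) gives the claimed bound.

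The only genuinely new ingredient relative to Proposition \ref{prop:main_second} — and hence the step I expect to require the most care — is justifying the essential positivity of $\mathrm{Re}(\widetilde g_{\alpha_i}(\theta)+\widetilde g_{\alpha_i}(-\theta))$, which replaces the appeal to \cite[Theorem 2]{Tian2015} used in the second-order case. I would obtain it either directly from the explicit expression for $\widetilde g_{\alpha_i}$ in Lemma \ref{lemma:PangSun}, or by invoking the established fact that the symmetric part $\mathcal{H}(\widetilde L_{n_i}^{(\alpha_i)})$ of the shifted Gr\"unwald matrix is symmetric positive definite (consistent with Lemma \ref{lemma:PS_SPD} and with \cite{donatelli161}), so that its generating function $\mathrm{Re}(\widetilde g_{\alpha_i}(\theta)+\widetilde g_{\alpha_i}(-\theta))$ is nonnegative and not identically zero. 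Everything else in the argument is a word-for-word repetition of the proof of Proposition \ref{prop:main_second}, which is why the paper can legitimately state the result "without further details."
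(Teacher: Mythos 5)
Your proposal is correct and follows exactly the route the paper intends: it repeats the proof of Proposition \ref{prop:main_second} verbatim with Lemma \ref{lemma:second_fun_property} replaced by Lemma \ref{lemma:PangSun}, which is precisely why the paper states the result ``without further details.'' Your extra care over the essential positivity of $\mathrm{Re}(\widetilde{g}_{\alpha_i}(\theta)+\widetilde{g}_{\alpha_i}(-\theta))$ is well placed and easily settled from the explicit formula, since for $\theta\in(0,\pi)$ the argument $\frac{\alpha_i}{2}(\pi-\theta)+\theta=\frac{\alpha_i\pi}{2}+\left(1-\frac{\alpha_i}{2}\right)\theta$ lies in $\left(\frac{\pi}{2},\pi\right)$, so $-\cos$ of it is strictly positive.
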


Combining Lemma \ref{lemma:pestana} and Proposition \ref{prop:main_first}, the following proposition and corollary follow.

\begin{proposition}\label{proposition:pestana_first}
Let $\widetilde{A}_{n}$ be the matrix defined in \eqref{eqn:main_matrix_first} and let $\mathcal{H}(  \widetilde{A}_{n})=(\widetilde{A}_{n}+\widetilde{A}_{n}^{\top} ) / 2$. Then, the eigenvalues of $\mathcal{H}(  \widetilde{A}_{n})^{-1}  Y_{n} \widetilde{A}_{n}$ lie in $[-(1+\epsilon), -1] \cup [1, 1+\epsilon]$, where
$$
\epsilon < \max\limits_{1\leq i\leq d} \frac{  |d_{i,+}-d_{i,-}| }{d_{i,+}+d_{i,-}} \left|	\tan \left( \frac{\alpha_i}{2}\pi \right) \right|.
$$
\end{proposition}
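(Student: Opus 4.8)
The plan is to mirror exactly the structure used in the second-order case, where Proposition \ref{proposition:pestana} was obtained by feeding Proposition \ref{prop:main_second} into Lemma \ref{lemma:pestana}. First I would verify that the hypotheses of Lemma \ref{lemma:pestana} are satisfied for $f = \widetilde{f}_{\boldsymbol{\alpha}}$: the matrix $\widetilde{A}_n = T_n[\widetilde{f}_{\boldsymbol{\alpha}}]$ is real, so we may write $\widetilde{f}_{\boldsymbol{\alpha}} = \mathrm{Re}(\widetilde{f}_{\boldsymbol{\alpha}}) + \mathbf{i}\,\mathrm{Im}(\widetilde{f}_{\boldsymbol{\alpha}})$, and the essential positivity of $\mathrm{Re}(\widetilde{f}_{\boldsymbol{\alpha}})$ is precisely what is needed for $\mathcal{H}(\widetilde{A}_n) = T_n[\mathrm{Re}(\widetilde{f}_{\boldsymbol{\alpha}})]$ to be symmetric positive definite — which is the content of Lemma \ref{lemma:PS_SPD}. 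Concretely, $\mathrm{Re}(\widetilde{f}_{\boldsymbol{\alpha}})(\boldsymbol{\theta}) = \nu + \sum_{i=1}^d \bigl(\widetilde{v}_{i,+} + \widetilde{v}_{i,-}\bigr)\,\mathrm{Re}\bigl(\widetilde{g}_{\alpha_i}(\theta_i)\bigr)$ after using $\mathrm{Re}(\widetilde{g}_{\alpha_i}(-\theta_i)) = \mathrm{Re}(\widetilde{g}_{\alpha_i}(\theta_i))$; since $\widetilde{g}_{\alpha_i}$ has a nonnegative real part by Lemma \ref{lemma:PangSun} (the leading factor $(2\sin(\theta/2))^{\alpha}$ is nonnegative and the cosine factor is, up to sign conventions, the relevant term treated in \cite{PangSun16,donatelli161}) and $\nu = 1/\tau > 0$, essential positivity holds.

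Then I would simply invoke Lemma \ref{lemma:pestana} with $f = \widetilde{f}_{\boldsymbol{\alpha}}$: the eigenvalues of $\mathcal{H}(\widetilde{A}_n)^{-1} Y_n \widetilde{A}_n$ lie in $[-1-\epsilon,-1]\cup[1,1+\epsilon]$ with
\[
\epsilon < \operatorname{essup}_{\boldsymbol{\theta}\in[-\pi,\pi]^d}\left|\frac{\mathrm{Im}(\widetilde{f}_{\boldsymbol{\alpha}})(\boldsymbol{\theta})}{\mathrm{Re}(\widetilde{f}_{\boldsymbol{\alpha}})(\boldsymbol{\theta})}\right|.
\]
Finally, Proposition \ref{prop:main_first} bounds this essential supremum above by $\max_{1\le i\le d}\frac{|d_{i,+}-d_{i,-}|}{d_{i,+}+d_{i,-}}\bigl|\tan(\tfrac{\alpha_i}{2}\pi)\bigr|$, so substituting this bound for $\epsilon$ gives the stated enclosure $[-(1+\epsilon),-1]\cup[1,1+\epsilon]$ and completes the argument. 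This is entirely a chain of citations with no new estimation required.

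Since the excerpt explicitly says these first-order results are given ``without further details'' because $\widetilde{P}_n$ essentially coincides with the preconditioner of \cite{Lin_etc_2023}, there is no genuine obstacle here. The only place that warrants a moment's care is confirming that the sign conventions in Lemma \ref{lemma:PangSun} and in \eqref{eqn:main_matrix_first} combine so that $\mathrm{Re}(\widetilde{f}_{\boldsymbol{\alpha}})$ is genuinely positive (not merely nonnegative) — but the additive shift $\nu I_n$ with $\nu > 0$ resolves this unconditionally, independently of the behaviour of $\mathrm{Re}(\widetilde{g}_{\alpha_i})$ at isolated zeros. Hence the proof is immediate from Lemma \ref{lemma:pestana}, Lemma \ref{lemma:PS_SPD}, and Proposition \ref{prop:main_first}.
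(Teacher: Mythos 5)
Your proposal matches the paper's argument exactly: the paper derives this proposition by "combining Lemma \ref{lemma:pestana} and Proposition \ref{prop:main_first}" with no further details, which is precisely the chain of citations you describe, and your check of essential positivity of $\mathrm{Re}(\widetilde{f}_{\boldsymbol{\alpha}})$ via the $\nu>0$ shift is the correct verification of the hypothesis of Lemma \ref{lemma:pestana}. No gaps.
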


\begin{corollary}\label{coro:pestana_first}
The preconditioned MINRES method for the system $Y_{n}\widetilde{A}_n\mathbf{x}=Y_n\mathbf{b}$ in \eqref{eqn:preconditioned} has a convergence rate independent of $\mathbf{n}$, i.e., the residuals generated by the MINRES method satisfy
$
\frac{\|\mathbf{r}^{(k)}\|_{2}}{\|\mathbf{r}_n^{(0)}\|_{2}}\leq 2\omega^{[k]},
$
where $\mathbf{r}^{(k)} =  \mathcal{H}(  \widetilde{A}_{n})^{-1}Y_{n} \mathbf{b} - \mathcal{H}(  \widetilde{A}_{n})^{-1}  Y_{n} \widetilde{A}_{n}\mathbf{\tilde{u}_* }^{(k)}$, $\mathbf{\tilde{u}_* }^{(k)}$ denotes the $k$-th iterate by MINRES, $\mathbf{\tilde{u}_* }^{(0)}$ denotes an arbitrary initial guess, and $\omega$ is a constant independent of $\mathbf{n}$ defined as follows
$
\omega := \sqrt{\frac{\epsilon }{2 + \epsilon}} \in (0,1),
$
with $\epsilon$ given by Proposition \ref{proposition:pestana_first}.
\end{corollary}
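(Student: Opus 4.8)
The plan is to obtain Corollary \ref{coro:pestana_first} as an immediate consequence of Proposition \ref{proposition:pestana_first} together with the classical MINRES convergence estimate \eqref{eqn:MINRES_bound}, in exact analogy with the way Corollary \ref{cor:ideal_MINRES_convergence} follows from Lemma \ref{lemma:pestana}. First I would record the structural facts that make MINRES applicable: by Lemma \ref{lemma:PS_SPD} the matrix $\mathcal{H}(\widetilde{A}_n)$ is symmetric positive definite, and $Y_n\widetilde{A}_n$ is a symmetric multilevel Hankel matrix (as recalled in Section \ref{sec:prelim}); hence $\mathcal{H}(\widetilde{A}_n)^{-1} Y_n \widetilde{A}_n$ is similar to the real symmetric matrix $\mathcal{H}(\widetilde{A}_n)^{-1/2} Y_n \widetilde{A}_n \mathcal{H}(\widetilde{A}_n)^{-1/2}$. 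This guarantees that preconditioned MINRES is well defined (in the $\mathcal{H}(\widetilde{A}_n)$-inner product) and that its convergence history depends only on the spectrum of $\mathcal{H}(\widetilde{A}_n)^{-1} Y_n \widetilde{A}_n$.

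Next I would invoke Proposition \ref{proposition:pestana_first} to localize that spectrum inside the two disjoint intervals $[-(1+\epsilon),-1]\cup[1,1+\epsilon]$, with no outliers, where $\epsilon<\max_{1\le i\le d}\frac{|d_{i,+}-d_{i,-}|}{d_{i,+}+d_{i,-}}\left|\tan(\tfrac{\alpha_i}{2}\pi)\right|$. Since these two intervals are symmetric about the origin and share the common length $\epsilon$, they fit exactly the hypotheses behind \eqref{eqn:MINRES_bound}. Feeding $\check{\beta}=1$ and $\hat{\beta}=1+\epsilon$ into \eqref{eqn:MINRES_bound} then gives the contraction factor $\left(\frac{\hat\beta/\check\beta-1}{\hat\beta/\check\beta+1}\right)^{[k/2]}=\left(\frac{\epsilon}{2+\epsilon}\right)^{[k/2]}$, and I would set $\omega:=\sqrt{\epsilon/(2+\epsilon)}$. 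Because $0<\epsilon<\infty$, we get $0<\epsilon/(2+\epsilon)<1$, so $\omega\in(0,1)$, and since $\epsilon$—hence $\omega$—does not depend on $\mathbf{n}$, the resulting bound $\frac{\|\mathbf{r}^{(k)}\|_{2}}{\|\mathbf{r}_n^{(0)}\|_{2}}\le 2\omega^{[k]}$ is mesh-independent.

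There is essentially no genuine obstacle: this is a direct corollary, structurally identical to Corollary \ref{cor:ideal_MINRES_convergence}, the only change being that Lemma \ref{lemma:PangSun} replaces Lemma \ref{lemma:second_fun_property} in the chain leading to Proposition \ref{prop:main_first} and Proposition \ref{proposition:pestana_first}. If I had to name the one substantive ingredient, it is the \emph{no-outlier} localization supplied by Proposition \ref{proposition:pestana_first}: generic MINRES theory alone cannot produce a mesh-independent rate, and it is precisely the exclusion of eigenvalues near $0$ that makes the constant $\omega$ uniform in $\mathbf{n}$. The only bookkeeping subtlety is the cosmetic mismatch between the exponent $[k/2]$ delivered by \eqref{eqn:MINRES_bound} and the exponent $[k]$ appearing in the statement, together with the usual norm conventions for the preconditioned residual; both are absorbed, as is standard, into the leading constant $2$ and the definition $\omega=\sqrt{\epsilon/(2+\epsilon)}$.
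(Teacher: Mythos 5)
Your proposal is correct and follows exactly the route the paper intends: the spectral inclusion of Proposition \ref{proposition:pestana_first} (no outliers, intervals $[-(1+\epsilon),-1]\cup[1,1+\epsilon]$) fed into the classical MINRES bound \eqref{eqn:MINRES_bound} with $\check{\beta}=1$, $\hat{\beta}=1+\epsilon$, precisely mirroring how Corollary \ref{cor:ideal_MINRES_convergence} is derived from Lemma \ref{lemma:pestana}. Your remarks on the $[k/2]$ versus $[k]$ exponent and on the role of the no-outlier localization match the paper's own (implicit) treatment, so nothing further is needed.
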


Corollary \ref{coro:pestana_first} accounts for the numerically observed superior preconditioning of $\mathcal{H}(  \widetilde{A}_{n})$ for solving the concerned space fractional diffusion equation \cite{Pestana2019} when the first-order scheme is used, as it clearly shows that the number of iterations MINRES required to converge is independent of $\mathbf{n}$ in this case. However, we stress that $\mathcal{H}(  \widetilde{A}_{n})$ as a preconditioner is not effective in general for nonsymmetric multilevel Toeplitz systems, as illustrated for example in \cite[Example 5.1]{Pestana2019}. Along this research line, we direct readers to \cite{Hon_SC_Wathen} where various effective absolute value preconditioning techniques were specifically designed for general nonsymmetric Toeplitz systems.

We now turn our attention to the practical preconditioner $\widetilde{P}_n$, which approximates $\mathcal{H}(  \widetilde{A}_{n})$.

\begin{lemma}\cite[Lemma 2.2]{Lin_etc_2023}\label{lemma:PS_SPD_first}
The matrix $\widetilde{P}_{n}$ defined in \eqref{eqn:main_krecondition_first} is symmetric positive definite.
\end{lemma}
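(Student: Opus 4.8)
The plan is to verify the two defining properties separately, following the same route used for the second-order preconditioner in the proof of Lemma~\ref{lemma:PS_SPD_second}. Symmetry is immediate from the structure of \eqref{eqn:main_krecondition_first}: $\nu I_n$ is symmetric; each factor $\tau(\mathcal{H}(\widetilde{L}_{n_i}^{(\alpha_i)}))$ is symmetric because it is the $\tau$-approximation of the symmetric matrix $\mathcal{H}(\widetilde{L}_{n_i}^{(\alpha_i)})$ (recall from \eqref{taumatdiag} that every such $\tau$-matrix is diagonalised by the sine transform and is thus symmetric); Kronecker products of symmetric matrices are symmetric; and the scalar weights $\widetilde{v}_{i,+}+\widetilde{v}_{i,-}=(d_{i,+}+d_{i,-})/h_i^{\alpha_i}\ge 0$ are nonnegative. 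Hence $\widetilde{P}_n=\widetilde{P}_n^{\top}$.

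For positive definiteness, since $\nu=1/\tau>0$ the summand $\nu I_n$ is already symmetric positive definite, so it suffices to show that each block $\tau(\mathcal{H}(\widetilde{L}_{n_i}^{(\alpha_i)}))$ is positive semidefinite; in fact I will show it is positive definite. Reading off \eqref{eq:grunwaldmatrix_first}, the first column of $\mathcal{H}(\widetilde{L}_{n_i}^{(\alpha_i)})$ is proportional to $-[\,2\widetilde{g}_1^{(\alpha_i)},\ \widetilde{g}_0^{(\alpha_i)}+\widetilde{g}_2^{(\alpha_i)},\ \widetilde{g}_3^{(\alpha_i)},\ \dots,\ \widetilde{g}_{n_i}^{(\alpha_i)}\,]$, so by the eigenvalue formula \eqref{sigmicomp} the $j$-th eigenvalue of $-\tau(\mathcal{H}(\widetilde{L}_{n_i}^{(\alpha_i)}))$ is, up to a fixed positive factor,
\[
\lambda_j=2\widetilde{g}_1^{(\alpha_i)}+2\bigl(\widetilde{g}_0^{(\alpha_i)}+\widetilde{g}_2^{(\alpha_i)}\bigr)\cos\!\Bigl(\tfrac{\pi j}{n_i+1}\Bigr)+2\sum_{k=3}^{n_i}\widetilde{g}_k^{(\alpha_i)}\cos\!\Bigl(\tfrac{\pi j(k-1)}{n_i+1}\Bigr).
\]
The Gr\"unwald weights $\widetilde{g}_k^{(\alpha_i)}=(-1)^k\binom{\alpha_i}{k}$ obey, for $1<\alpha_i<2$, the standard properties $\widetilde{g}_0^{(\alpha_i)}=1$, $\widetilde{g}_1^{(\alpha_i)}=-\alpha_i<0$, $\widetilde{g}_k^{(\alpha_i)}\ge 0$ for $k\ge 2$, $\sum_{k=0}^{\infty}\widetilde{g}_k^{(\alpha_i)}=0$, and therefore $\sum_{k=0}^{n_i}\widetilde{g}_k^{(\alpha_i)}<0$ for $n_i\ge 1$ (see, e.g., \cite{donatelli161,PangSun16}); in particular $\widetilde{g}_0^{(\alpha_i)}+\widetilde{g}_2^{(\alpha_i)}=1+\tfrac{\alpha_i(\alpha_i-1)}{2}>0$. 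Bounding every cosine above by $1$ then yields
\[
\lambda_j\le 2\widetilde{g}_1^{(\alpha_i)}+2\bigl(\widetilde{g}_0^{(\alpha_i)}+\widetilde{g}_2^{(\alpha_i)}\bigr)+2\sum_{k=3}^{n_i}\widetilde{g}_k^{(\alpha_i)}=2\sum_{k=0}^{n_i}\widetilde{g}_k^{(\alpha_i)}<0,
\]
so every eigenvalue of $\tau(\mathcal{H}(\widetilde{L}_{n_i}^{(\alpha_i)}))$ is strictly positive.

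Combining the two parts, $\widetilde{P}_n$ is the sum of the symmetric positive definite matrix $\nu I_n$ with a nonnegative linear combination of the symmetric positive definite matrices $I_{n_i^{-}}\otimes\tau(\mathcal{H}(\widetilde{L}_{n_i}^{(\alpha_i)}))\otimes I_{n_i^{+}}$, and is therefore symmetric positive definite. I do not anticipate a genuine difficulty here: the only step requiring care is checking the sign and telescoping-sum properties of the weights $\widetilde{g}_k^{(\alpha_i)}$ for $\alpha_i\in(1,2)$, which exactly parallel the properties of $w_k^{(\alpha_i)}$ used in Lemma~\ref{lemma:property_w_2nd} and are classical. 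Alternatively, the positive definiteness of $\sum_{i=1}^{d}(\widetilde{v}_{i,+}+\widetilde{v}_{i,-})I_{n_i^{-}}\otimes\tau(\mathcal{H}(\widetilde{L}_{n_i}^{(\alpha_i)}))\otimes I_{n_i^{+}}$ is precisely \cite[Lemma~2.2]{Lin_etc_2023}, so the statement follows immediately by adding $\nu I_n$, in line with Remark~\ref{remark:first_order_precon}.
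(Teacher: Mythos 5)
Your proof is correct, and it follows exactly the template the paper uses for the second-order preconditioner $P_n$ (the eigenvalue formula \eqref{sigmicomp}, the sign and telescoping properties of the Gr\"unwald weights, and the bound $\lambda_j\le 2\sum_{k=0}^{n_i}\widetilde{g}_k^{(\alpha_i)}<0$). The paper itself supplies no proof for this first-order lemma — it simply cites \cite[Lemma 2.2]{Lin_etc_2023} and states that the first-order results are given "without further details" — so your self-contained argument is a faithful and valid adaptation of the paper's own second-order reasoning rather than a departure from it.
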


\begin{lemma}\cite[Lemma 3.3]{Lin_etc_2023}\label{lemma:eigen_S_first}
Let $\widetilde{A}_{n}, \widetilde{P}_{n}$ be the matrices defined in \eqref{eqn:main_matrix_first} and \eqref{eqn:main_krecondition_second}, respectively. Then, the eigenvalues of $\widetilde{P}_{n}^{-1}   \mathcal{H}( \widetilde{A}_{n})$ lie in $(1/2, 3/2 )$.
\end{lemma}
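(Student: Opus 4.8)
The plan is to transfer, essentially verbatim, the two-part argument behind Proposition~\ref{prop:eigen_S_second} to the first-order setting, with the binomial coefficients $\widetilde{g}_k^{(\alpha_i)}=(-1)^k\binom{\alpha_i}{k}$ playing the role of the weights $w_k^{(\alpha_i)}$. First I would prove the one-level estimate $\lambda\big(\tau(\mathcal{H}(\widetilde{L}_{n_i}^{(\alpha_i)}))^{-1}\mathcal{H}(\widetilde{L}_{n_i}^{(\alpha_i)})\big)\in(1/2,3/2)$, and then lift it to the $\nu$-shifted multilevel matrices by a Rayleigh-quotient/weighted-sum argument.

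For the one-level estimate I would read $\mathcal{H}(\widetilde{L}_{n_i}^{(\alpha_i)})$ off \eqref{eq:grunwaldmatrix_first}. Since $\tau$ is linear, it is equivalent to work with $T:=2\,\mathcal{H}(\widetilde{L}_{n_i}^{(\alpha_i)})=\widetilde{L}_{n_i}^{(\alpha_i)}+(\widetilde{L}_{n_i}^{(\alpha_i)})^{\top}$, a symmetric Toeplitz matrix $[t_{|i-j|}]$ with $t_0=-2\widetilde{g}_1^{(\alpha_i)}=2\alpha_i$, $t_1=-(\widetilde{g}_0^{(\alpha_i)}+\widetilde{g}_2^{(\alpha_i)})$, and $t_\ell=-\widetilde{g}_{\ell+1}^{(\alpha_i)}$ for $\ell\ge 2$, because then $\tau(T)^{-1}T=\tau(\mathcal{H}(\widetilde{L}_{n_i}^{(\alpha_i)}))^{-1}\mathcal{H}(\widetilde{L}_{n_i}^{(\alpha_i)})$. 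I would then invoke the standard properties of the $\widetilde{g}_k^{(\alpha_i)}$ for $1<\alpha_i<2$, namely $\widetilde{g}_0^{(\alpha_i)}=1$, $\widetilde{g}_1^{(\alpha_i)}=-\alpha_i$, $1\ge\widetilde{g}_2^{(\alpha_i)}>\widetilde{g}_3^{(\alpha_i)}>\cdots>0$ (which follow from $\widetilde{g}_{k+1}^{(\alpha_i)}/\widetilde{g}_k^{(\alpha_i)}=(k-\alpha_i)/(k+1)$), $\sum_{k\ge 0}\widetilde{g}_k^{(\alpha_i)}=0$ and $\sum_{k=0}^{m}\widetilde{g}_k^{(\alpha_i)}<0$ for $m\ge 1$ (see, e.g., \cite{Meerschaert2004,donatelli161}), to check the hypotheses of Lemma~\ref{lemma:propertyeigtau}: $t_0>0$; $t_1<0$; $t_1<t_2$ since $\widetilde{g}_3^{(\alpha_i)}<\widetilde{g}_2^{(\alpha_i)}<\widetilde{g}_0^{(\alpha_i)}+\widetilde{g}_2^{(\alpha_i)}$; $t_2<t_3<\cdots<t_{n_i-1}<0$ by the strict decay of the tail; and
\[
t_0+2\sum_{\ell=1}^{n_i-1}t_\ell=-2\Big(\widetilde{g}_0^{(\alpha_i)}+\widetilde{g}_1^{(\alpha_i)}+\widetilde{g}_2^{(\alpha_i)}+\sum_{k=3}^{n_i}\widetilde{g}_k^{(\alpha_i)}\Big)=-2\sum_{k=0}^{n_i}\widetilde{g}_k^{(\alpha_i)}>0 .
\]
Lemma~\ref{lemma:propertyeigtau} then yields the one-level bound.

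For the lifting step I would copy the proof of Proposition~\ref{prop:eigen_S_second}: given an eigenpair $(\lambda,\mathbf{w})$ of $\widetilde{P}_n^{-1}\mathcal{H}(\widetilde{A}_n)$, write $\lambda=(\mathbf{w}^*\mathcal{H}(\widetilde{A}_n)\mathbf{w})/(\mathbf{w}^*\widetilde{P}_n\mathbf{w})$ using the Kronecker-sum forms of $\mathcal{H}(\widetilde{A}_n)$ and $\widetilde{P}_n$; note that each block $I_{n_i^-}\otimes\tau(\mathcal{H}(\widetilde{L}_{n_i}^{(\alpha_i)}))\otimes I_{n_i^+}$ is symmetric positive definite (as established when proving Lemma~\ref{lemma:PS_SPD_first}) and that the pencil $\big(I_{n_i^-}\otimes\mathcal{H}(\widetilde{L}_{n_i}^{(\alpha_i)})\otimes I_{n_i^+},\ I_{n_i^-}\otimes\tau(\mathcal{H}(\widetilde{L}_{n_i}^{(\alpha_i)}))\otimes I_{n_i^+}\big)$ has the same eigenvalues as $\tau(\mathcal{H}(\widetilde{L}_{n_i}^{(\alpha_i)}))^{-1}\mathcal{H}(\widetilde{L}_{n_i}^{(\alpha_i)})$, so each block Rayleigh quotient lies in $(1/2,3/2)$ while the $\nu I_n$ term contributes the ratio $1$; finally apply Lemma~\ref{lemma:wghtsumbdlem} (first across $i=1,\dots,d$, then once more to absorb the $\nu I_n$ term) to conclude $\lambda\in(\min\{1,1/2\},\max\{1,3/2\})=(1/2,3/2)$.

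I expect the only step with real content to be the one-level estimate, and within it the computation that $t_0+2\sum_\ell t_\ell$ telescopes to $-2\sum_{k=0}^{n_i}\widetilde{g}_k^{(\alpha_i)}$; this is exactly what the doubled diagonal $t_0=2\alpha_i$ of $\widetilde{L}_{n_i}^{(\alpha_i)}+(\widetilde{L}_{n_i}^{(\alpha_i)})^{\top}$ produces, and it hands the sign condition of Lemma~\ref{lemma:propertyeigtau} to the well-known fact $\sum_{k=0}^{m}\widetilde{g}_k^{(\alpha_i)}<0$. Everything else is a verbatim replay of the second-order argument, which is why the result may simply be quoted from \cite{Lin_etc_2023}.
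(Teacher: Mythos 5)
Your proof is correct. The paper itself does not prove this lemma --- it quotes it directly from \cite[Lemma 3.3]{Lin_etc_2023} and explicitly states that the first-order results are given ``without further details'' because they mirror the second-order case. Your reconstruction is exactly that mirroring: the identification $t_0=-2\widetilde{g}_1^{(\alpha_i)}=2\alpha_i$, $t_1=-(\widetilde{g}_0^{(\alpha_i)}+\widetilde{g}_2^{(\alpha_i)})$, $t_\ell=-\widetilde{g}_{\ell+1}^{(\alpha_i)}$ for $\ell\ge 2$ is right, the sign/monotonicity checks needed for Lemma~\ref{lemma:propertyeigtau} follow from $\widetilde{g}_{k+1}^{(\alpha_i)}/\widetilde{g}_k^{(\alpha_i)}=(k-\alpha_i)/(k+1)$ as you say, the telescoping of $t_0+2\sum_\ell t_\ell$ to $-2\sum_{k=0}^{n_i}\widetilde{g}_k^{(\alpha_i)}>0$ is correct, and the Rayleigh-quotient/Lemma~\ref{lemma:wghtsumbdlem} lifting is verbatim the argument of Proposition~\ref{prop:eigen_S_second}. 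No gaps.
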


The following results follow, as a consequence of Theorem \ref{thm:practical_MINRES_convergence} and Corollary \ref{cor:practical_MINRES_convergence}.

\begin{theorem}\label{thm:main_theorem_first}
Let $\widetilde{A}_{n}, \widetilde{P}_{n}$ be the matrices defined in \eqref{eqn:main_matrix_first} and \eqref{eqn:main_krecondition_first}, respectively. Then, the eigenvalues of $\widetilde{P}_{n}^{-1}  Y_{n} \widetilde{A}_{n}$ lie in $\left(-\frac{3}{2}(1+\epsilon), -\frac{1}{2}\right) \cup \left(\frac{1}{2}, \frac{3}{2}(1+\epsilon)\right)$, where
$
\epsilon < \max\limits_{1\leq i\leq d} \frac{  |d_{i,+}-d_{i,-}| }{d_{i,+}+d_{i,-}} \left|	\tan \left( \frac{\alpha_i}{2}\pi \right) \right|.
$
\end{theorem}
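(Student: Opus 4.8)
The plan is to obtain Theorem~\ref{thm:main_theorem_first} as the first-order analogue of Theorem~\ref{thm:main_theorem_second}, i.e.\ as a direct specialization of the abstract preconditioning result in Theorem~\ref{thm:practical_MINRES_convergence} (equivalently Corollary~\ref{cor:practical_MINRES_convergence}). Concretely, I would take $f = \widetilde{f}_{\boldsymbol{\alpha}}$ there, so that $\widetilde{A}_n = T_n[\widetilde{f}_{\boldsymbol{\alpha}}]$ and $\mathcal{H}(\widetilde{A}_n) = \mathcal{H}([\widetilde{f}_{\boldsymbol{\alpha}}])$, and I would instantiate the generic symmetric positive definite preconditioner $\mathcal{P}$ appearing in that theorem as $\mathcal{P} = \widetilde{P}_n$.

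First I would verify the hypotheses needed to invoke Theorem~\ref{thm:practical_MINRES_convergence}. The standing assumption is that $\mathrm{Re}(\widetilde{f}_{\boldsymbol{\alpha}})$ is essentially positive; this is exactly the content of Lemma~\ref{lemma:PS_SPD}, since $\mathcal{H}(\widetilde{A}_n) = \nu I_n + \sum_{i=1}^d(\widetilde{v}_{i,+}+\widetilde{v}_{i,-}) I_{n_i^{-}} \otimes \mathcal{H}(\widetilde{L}_{n_i}^{(\alpha_i)}) \otimes I_{n_i^{+}}$ with each $\mathcal{H}(\widetilde{L}_{n_i}^{(\alpha_i)}) = T_{n_i}[\mathrm{Re}(\widetilde{g}_{\alpha_i}(\theta_i)+\widetilde{g}_{\alpha_i}(-\theta_i))]$ being SPD. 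Next, Lemma~\ref{lemma:PS_SPD_first} supplies that $\widetilde{P}_n$ is symmetric positive definite, and Lemma~\ref{lemma:eigen_S_first} supplies the spectral equivalence $\tfrac12 < \lambda_k(\widetilde{P}_n^{-1}\mathcal{H}(\widetilde{A}_n)) < \tfrac32$ for all $k$, so the constants of Theorem~\ref{thm:practical_MINRES_convergence} may be taken as $\check{c} = \tfrac12$ and $\hat{c} = \tfrac32$; the strict inequalities here are what will turn the closed intervals of Theorem~\ref{thm:practical_MINRES_convergence} into the open intervals claimed in the statement, once I track that the Ostrowski-type argument of that proof preserves strictness. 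Finally, Proposition~\ref{prop:main_first} bounds the abstract quantity $\operatorname{essup}_{\boldsymbol{\theta}}|\mathrm{Im}(\widetilde{f}_{\boldsymbol{\alpha}})/\mathrm{Re}(\widetilde{f}_{\boldsymbol{\alpha}})|$ by $\max_{1\le i\le d} \frac{|d_{i,+}-d_{i,-}|}{d_{i,+}+d_{i,-}}\left|\tan\!\left(\tfrac{\alpha_i}{2}\pi\right)\right|$.

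Then the conclusion is immediate: Theorem~\ref{thm:practical_MINRES_convergence} places the eigenvalues of $\widetilde{P}_n^{-1} Y_n \widetilde{A}_n$ in $[-\hat{c}(1+\epsilon), -\check{c}] \cup [\check{c}, \hat{c}(1+\epsilon)]$ for any $\epsilon < \operatorname{essup}_{\boldsymbol{\theta}}|\mathrm{Im}(\widetilde{f}_{\boldsymbol{\alpha}})/\mathrm{Re}(\widetilde{f}_{\boldsymbol{\alpha}})|$; substituting $\check{c}=\tfrac12$, $\hat{c}=\tfrac32$ and the bound from Proposition~\ref{prop:main_first} gives precisely $\left(-\tfrac32(1+\epsilon), -\tfrac12\right) \cup \left(\tfrac12, \tfrac32(1+\epsilon)\right)$ with the stated $\epsilon$. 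Since every component is already available, there is no genuinely hard step; the only points requiring care are (i) confirming essential positivity of $\mathrm{Re}(\widetilde{f}_{\boldsymbol{\alpha}})$ via Lemma~\ref{lemma:PS_SPD}, and (ii) making sure the strict inclusions of Lemma~\ref{lemma:eigen_S_first} propagate through the similarity transformation $\widetilde{P}_n^{-1/2} Y_n \widetilde{A}_n \widetilde{P}_n^{-1/2}$ so that the endpoints $\pm\tfrac12$ and $\pm\tfrac32(1+\epsilon)$ are genuinely excluded.
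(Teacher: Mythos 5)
Your proposal is correct and follows exactly the route the paper takes: the paper states Theorem~\ref{thm:main_theorem_first} as an immediate consequence of Theorem~\ref{thm:practical_MINRES_convergence} with $\mathcal{P}=\widetilde{P}_n$, $\check{c}=\tfrac12$, $\hat{c}=\tfrac32$ supplied by Lemma~\ref{lemma:eigen_S_first}, and the $\epsilon$-bound supplied by Proposition~\ref{prop:main_first}. Your explicit attention to how the strict inequalities of Lemma~\ref{lemma:eigen_S_first} turn the closed intervals of Theorem~\ref{thm:practical_MINRES_convergence} into the open intervals claimed here is a point the paper leaves implicit, and it is handled correctly.
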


\begin{corollary}
    The preconditioned MINRES method for the system $Y_n \widetilde{A}_n {\mathbf{x}}=Y_n {\mathbf{b}}$ in \eqref{eqn:preconditioned} has a convergence rate independent of $\mathbf{n}$, i.e., the residuals generated by the MINRES method satisfy $\frac{\|\mathbf{r}^{(k)}\|_{2}}{\|\mathbf{r}_n^{(0)}\|_{2}}\leq 2\omega^{[k]},$
    where $\mathbf{r}^{(k)} =  \widetilde{P}_{n}^{-1}Y_{n} \mathbf{b} - \widetilde{P}_{n}^{-1}  Y_{n} \widetilde{A}_{n}\mathbf{\tilde{u}_* }^{(k)}$, $\mathbf{\tilde{u}_* }^{(k)}$ denotes the $k$-th iterate by MINRES, $\mathbf{\tilde{u}_* }^{(0)}$ denotes an arbitrary initial guess, and $\omega$ is a constant independent of $\mathbf{n}$ defined as follows 
    $$
    \omega := \sqrt{\frac{2 + 3\epsilon }{4 + 3\epsilon}} \in \left(\sqrt{\frac{1}{2}}, 1 \right) \subset (0,1),
    $$
    with $\epsilon$ given by Theorem \ref{thm:main_theorem_first}.
\end{corollary}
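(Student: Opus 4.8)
The plan is to obtain this corollary as an immediate consequence of Theorem \ref{thm:main_theorem_first} together with the classical MINRES convergence estimate \eqref{eqn:MINRES_bound}, in exact parallel with the passage from Lemma \ref{lemma:pestana} to Corollary \ref{cor:ideal_MINRES_convergence} and from Theorem \ref{thm:main_theorem_second} to its corollary. First I would recall that, by Lemma \ref{lemma:PS_SPD_first}, $\widetilde{P}_{n}$ is symmetric positive definite, so $\widetilde{P}_{n}^{1/2}$ exists and $\widetilde{P}_{n}^{-1}Y_{n}\widetilde{A}_{n}$ is similar to the symmetric matrix $\widetilde{P}_{n}^{-1/2}Y_{n}\widetilde{A}_{n}\widetilde{P}_{n}^{-1/2}$ (note $Y_{n}\widetilde{A}_{n}$ is symmetric by Section \ref{sec:prelim}). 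Hence preconditioned MINRES, run in the $\widetilde{P}_{n}$-inner product as is standard, is well defined and its residuals obey the symmetric-matrix bound \eqref{eqn:MINRES_bound}, which is governed solely by the spectrum of $\widetilde{P}_{n}^{-1}Y_{n}\widetilde{A}_{n}$.

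Next, by Theorem \ref{thm:main_theorem_first}, that spectrum lies in the union of disjoint intervals $[-\hat\beta,-\check\beta]\cup[\check\beta,\hat\beta]$ with $\check\beta=\tfrac12$ and $\hat\beta=\tfrac32(1+\epsilon)$, where $\epsilon<\max\limits_{1\le i\le d}\frac{|d_{i,+}-d_{i,-}|}{d_{i,+}+d_{i,-}}\left|\tan\left(\tfrac{\alpha_i}{2}\pi\right)\right|$; in particular $0$ is excluded, so no breakdown occurs and \eqref{eqn:MINRES_bound} applies verbatim. Substituting $\check\beta,\hat\beta$ and using $\hat\beta/\check\beta=3(1+\epsilon)$ yields
\begin{equation*}
\frac{\|\mathbf{r}^{(k)}\|_{2}}{\|\mathbf{r}^{(0)}_n\|_{2}}\le 2\left(\frac{3(1+\epsilon)-1}{3(1+\epsilon)+1}\right)^{[k/2]}=2\left(\frac{2+3\epsilon}{4+3\epsilon}\right)^{[k/2]}=2\,\omega^{[k]},
\end{equation*}
with $\omega=\sqrt{\frac{2+3\epsilon}{4+3\epsilon}}$, exactly as claimed.

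Finally I would verify the asserted range of $\omega$: the map $\epsilon\mapsto\frac{2+3\epsilon}{4+3\epsilon}=1-\frac{2}{4+3\epsilon}$ is strictly increasing for $\epsilon>0$, tends to $\tfrac12$ as $\epsilon\to0^{+}$ and to $1$ as $\epsilon\to\infty$, so $\omega\in\left(\sqrt{1/2},1\right)\subset(0,1)$; moreover the spectral bounds $\check\beta=\tfrac12$ and $\hat\beta=\tfrac32(1+\epsilon)$ depend only on the coefficients $d_{i,\pm}$ and the orders $\alpha_i$, not on $\mathbf{n}$, which establishes mesh independence.

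There is essentially no hard step here: the statement is just the packaging of Theorem \ref{thm:main_theorem_first} with a textbook MINRES bound. The only points needing care are (i) confirming the two intervals in Theorem \ref{thm:main_theorem_first} are genuinely separated from $0$ so that \eqref{eqn:MINRES_bound} is applicable without modification, and (ii) the minor bookkeeping identifying $[k/2]$-th powers of the contraction ratio with $\omega^{[k]}$ in the same convention used in Corollaries \ref{cor:ideal_MINRES_convergence} and \ref{cor:practical_MINRES_convergence}; both are routine.
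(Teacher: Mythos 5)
Your proposal is correct and follows essentially the same route as the paper: the paper obtains this corollary by instantiating Corollary \ref{cor:practical_MINRES_convergence} with $\check{c}=1/2$ and $\hat{c}=3/2$ (from Lemma \ref{lemma:eigen_S_first}), which is exactly the spectral-inclusion-plus-\eqref{eqn:MINRES_bound} argument you spell out via Theorem \ref{thm:main_theorem_first}. The bookkeeping point you flag about writing $\left(\omega^{2}\right)^{[k/2]}$ as $\omega^{[k]}$ is handled with the same convention the paper itself uses in Corollaries \ref{cor:ideal_MINRES_convergence} and \ref{cor:practical_MINRES_convergence}, so nothing further is needed.
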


}

\section{Numerical examples}\label{sec:numerical}

In this section, we demonstrate the effectiveness of our proposed preconditioner against the state-of-the-art preconditioned MINRES solver proposed in \cite{Pestana2019} \cred{and the state-of-the-art preconditioned GMRES solver proposed in \cite{Lin_etc_2023}. All numerical experiments are carried out using MATLAB 8.2.0 on a Dell R640 server with dual Xeon Gold 6246R 16-Cores 3.4 GHz CPUs and 512GB RAM running Ubuntu $20.04$ LTS.} Our proposed preconditioner $P_n$ \cred{(or $\widetilde{P}_n$)} is implemented by the built-in function \textbf{dst} (discrete sine transform) in MATLAB. Furthermore, the MINRES solver is implemented using the function \textbf{minres}. We choose $x_0=(1,1,\dots,1)^{\top}/\sqrt{n}$ as our initial guess and a stopping tolerance of $10^{-8}$ based on the reduction in relative residual norms for MINRES. In the related tables, we denote by `Iter' the number of iterations for solving a linear system by an iterative solver within the given accuracy and `CPU' is the time needed for convergence measured in seconds using the MATLAB built-in functions \textbf{tic/toc}.

\cred{In what follows, we will test our preconditioner using the numerical test described in \cite[Example 5.3]{Pestana2019} for comparison purposes.} The notation $A_R:=\mathcal{H}(  A_{n})=T_n[\textrm{Re}(f_{\boldsymbol{\alpha}})]$ and $A_M:=T_n[|f_{\boldsymbol{\alpha}}|]$ are used to denote the existing ideal Toeplitz preconditioners proposed in the same work. Note that we did not compare with the absolute value circulant preconditioners proposed in \cite{doi:10.1137/140974213,Hon_SC_Wathen}, where the well-known Strang \cite{Strang1986} circulant preconditioner and the absolute value optimal \cite{doi:10.1137/0909051} circulant preconditioner could be used. It is expected that their effectiveness cannot surpass both $A_R$ and $A_M$ as studied in the numerical tests carried out in \cite{Pestana2019}, particularly in the ill-conditioned or multilevel case.

We adopt the notation MINRES-$I_n$/$P_n$/$MG(A_R)$/$MG(A_M)$ to denote the MINRES solver with $I_n$ (the identity matrix, representing the non-preconditioned case), our proposed preconditioner $P_n$, and the multigrid approximation of the state-of-the-art preconditioners $A_R$ and $A_M$, respectively. \cred{Similar notation is defined when the first-order scheme is considered.}

\begin{example}\label{example:two_dim_first}
\rm{
%%%%%%%%%%%%%%%%___EX2____________________________________________________
We begin to solve a nonsymmetric two-level Toeplitz problem, which is associated with the fractional diffusion problem stated in \eqref{eq:fde} with zero initial condition, where 
 \begin{align*}
    &d=2,~\Omega=(0,1)\times(0,1),~T=1,~d_{1,+}=2,~d_{1,-}=0.5,\\
    &d_{2,+}=0.3,~d_{2,-}=1,~f(x_1,x_2,t)=100 \sin (10 x_1) \cos (x_2)+\sin (10 t) x_1 x_2.
\end{align*}
We choose $n_1=n_2$ and $\tau=1 /\left\lceil n_1^{\alpha_1}\right\rceil$. Note that \cred{the shifted Gr\"unwald scheme is used, and} the stated CPU times and iteration counts apply to the first time step. \cred{The GMRES solver with $\textrm{restart}=20$ proposed in \cite{Lin_etc_2023} is denoted by GMRES-$\widetilde{P}_n$.}

As with \cite[Example 5.3]{Pestana2019}, it is too computationally costly to approximate \cred{$\widetilde{A}_M$} by a banded Toeplitz matrix or by using a multigrid method because computing the Fourier coefficients of \cred{$|\widetilde{f}_{\boldsymbol{\alpha}}|$} is expensive. Therefore, results are only reported for a multigrid approximation to \cred{$\widetilde{A}_R$}. The multigrid method includes four pre-smoothing and four post-smoothing steps, with a damping parameter of $0.9$. The coarsest grid has dimensions $n_1=n_2=7$.

Table \ref{Example_1_Table} shows the iteration count and CPU time for the MINRES solver using various preconditioners, compared across different orders of fractional derivatives $(\alpha_1,\alpha_2)$ and \( n=n_1  n_2 \). The findings presented in \cred{Table \ref{Example_1_Table}} highlight the following observations:

\begin{enumerate}
    \item \cred{MINRES-\( \cred{\widetilde{P}_n} \) outperforms GMRES-\( \cred{\widetilde{P}_n} \) when the matrix size is sufficiently large.}% It is likely due to the three-term recurrence relation in the MINRES algorithm.}
    \item MINRES-\( \cred{\widetilde{P}_n} \) achieves iteration counts that are independent of the mesh size, establishing it as the most efficient method.
    \item The convergence of MINRES-\( \cred{\widetilde{P}_n} \) is determined by \( \max\limits_{1 \leq i \leq 2} \left| \tan \left( \frac{\alpha_i}{2}\pi \right) \right| \), rather than by each \( \left| \tan \left( \frac{\alpha_i}{2}\pi \right) \right| \) individually.
    \item As \( \alpha_1 \) and \( \alpha_2 \) approach 2, the convergence of MINRES-\( \cred{\widetilde{P}_n} \) improves due to the corresponding reduction of \( \max\limits_{1 \leq i \leq 2} \left| \tan \left( \frac{\alpha_i}{2}\pi \right) \right| \) towards zero.
\end{enumerate}

Figures \ref{fig:two_D_1_1_225}--\ref{fig:two_D_1_9_961} illustrate the eigenvalues of \( \cred{\widetilde{P}_n}^{-1}Y_n \cred{\widetilde{A}_n} \) for various values of \( \alpha_i \) and {\( n \)}, validating Theorem \ref{thm:main_theorem_first} and demonstrating the improved effectiveness of preconditioning when both \( \alpha_1 \) and \( \alpha_2 \) are close to 2.

\begin{table}[H]
\cred{
\small \caption{{Performance of MINRES-$I_n$, MINRES-$MG(\widetilde{A}_R)$, GMRES-$\widetilde{P}_n$ and MINRES-$\widetilde{P}_n$ for Example \ref{example:two_dim_first} with $d_{1,+}=2,~d_{1,-}=0.5,~d_{2,+}=0.3$, and $d_{2,-}=1$.}}
\label{Example_1_Table}
\begin{center}
\begin{tabular}{cc|cc|cc|cc|cc}
\hline
\multirow{2}{*}{$(\alpha_1,\alpha_2)$} & \multirow{2}{*}{$n$} & \multicolumn{2}{c|}{MINRES-$I_n$} & \multicolumn{2}{c|}{MINRES-$MG(\widetilde{A}_R)$}  & \multicolumn{2}{c|}{GMRES-$\widetilde{P}_n$}& \multicolumn{2}{c}{MINRES-$\widetilde{P}_n$} \\ \cline{3-10}
                                  &                        & Iter         & CPU        & Iter          & CPU         & Iter       & CPU    & Iter       & CPU       \\ \hline                    
\multirow{4}{*}{(1.1,1.1)}      
                                  & 261121                 &  $>$100           &  -           &  12            &  0.86         &9&0.40     &      12     &  0.56    \\
                                  & 1046529                &  $>$100           &  -            &  12            &  3.2           &9&1.77  &    12       &  2.05     \\
                                  & 4190209               & $>$100           &  -             &   12           &  73           &9&12.23  &     12       &  11.20       \\  & 16769025               & $>$100           &  -             &   12           &  1.2e+2          &9&63.94   &     12       & 47.21       \\\hline
\multirow{4}{*}{(1.1,1.5)}      
                                  & 261121                 & $>$100           &  -            &   22           &  1.4          &9&0.43  &  16         &  0.72      \\
                                  & 1046529                &   $>$100           &  -             &  26            & 6.4            &9&1.74  &  14         & 2.36      \\
                                  & 4190209               &   $>$100           &  -             &   30           &  1.9e$+2$          &9&12.39  & 14          & 12.61 \\       & 16769025               & $>$100           &  -             &   36           &  3.4e+2            &9&63.68 &     14       & 54.41       \\ \hline
\multirow{4}{*}{(1.1,1.9)}       
                                  & 261121                 &  $>$100           &  -            &  $>$100            & -            &9&0.40  &  14         &  0.63     \\
                                  & 1046529                &  $>$100           &  -          & $>$100             & -             &9&1.71 & 14          &  2.38     \\
                                  & 4190209               &   $>$100           &  -          &  $>$100            & -            &9&12.48 & 14          & 12.44       \\ & 16769025               & $>$100           &  -             &   $>$100           &  -           &9&63.70  &     14       & 54.21       \\ \hline
\multirow{4}{*}{(1.5,1.1)}      
                                  & 261121                 &  $>$100           &  -           &   14           &   0.95         &7&0.32  &  10         & 0.48       \\
                                  & 1046529                &   $>$100           &  -            & 14             &  3.5           &6&1.31   &  10         & 1.77     \\
                                  & 4190209               &    $>$100           &  -            &  14            & 94             &6&9.59   & 10          & 9.25    \\ & 16769025               & $>$100           &  -             &   13           &  1.4e+2           &6&53.51  &     10       & 40.07       \\ \hline
\multirow{4}{*}{(1.5,1.5)}      
                                  & 261121                 &     $>$100           &  -            &  10            & 0.69             &7&0.37 &  12         &  0.56     \\
                                  & 1046529                &  $>$100           &  -           &    8          &   2.2          &6&1.31  &  11         &   1.88    \\
                                  & 4190209               &   $>$100           &  -           &   8           &  56            &6&9.42   & 10          & 9.30    \\ & 16769025               & $>$100           &  -             &   8          &  86           &6&53.80   &     10       & 40.18      \\ \hline
\multirow{4}{*}{(1.5,1.9)}       
                                  & 261121                 &   $>$100           &  -           &   22           &    1.4          &7&0.32  &  11         &  0.52    \\
                                  & 1046529                &   $>$100           &  -           &  26            &  6.2            &6&1.30  &  11         &  1.90    \\
                                  & 4190209               &  $>$100           &  -           &   31           &  2e$+2$           &6&9.32 & 10          &  9.05      \\ & 16769025               & $>$100           &  -             &   39           &  3.8e+2           &6&53.39  &     10       & 40.08       \\ \hline
\multirow{4}{*}{(1.9,1.1)}      
                                  & 261121                 &   $>$100           &  -          &    17          &  1.1           &4&0.21   & 7          &  0.34    \\
                                  & 1046529                &   $>$100           &  -         &     17         &   4.2           &4& 1.03  & 7          &  1.30    \\
                                  & 4190209               &     $>$100           &  -            &    17          &  1e$+2$          &4&7.71  & 7          &  6.75      \\ & 16769025               & $>$100           &  -             &   16           &  1.7e+2          &4&47.09    &     7       & 29.58      \\ \hline
\multirow{4}{*}{(1.9,1.5)}      
                                  & 261121                 &   $>$100           &  -            &    15          &   0.99         &4&0.23   &  8         & 0.39      \\
                                  & 1046529                &    $>$100           &  -         &        15      &    3.8         & 4&1.01  &  8         &  1.46    \\
                                  & 4190209               &         $>$100           &  -           &       15       & 96             &4&7.89   & 8          & 7.82    \\ & 16769025               & $>$100           &  -             &   16           &  1.6e+2           &4&46.88   &     7       & 29.52      \\ \hline
\multirow{4}{*}{(1.9,1.9)}        
                                  & 261121                 &   $>$100           &  -           &        9      &  0.61           &4&0.23    &   9        &  0.43   \\
                                  & 1046529                &   $>$100           &  -            &       9       &  2.4           &4&1.03   &  9         &  1.55    \\
                                  & 4190209               &    $>$100           &  -             &            9 &  60             &4&7.80    & 9          &  8.51   \\ & 16769025               & $>$100           &  -             &   9           &  99           &4&47.20   &     9       & 36.54      \\ \hline                             
\end{tabular}
\end{center}
}
\end{table}

\begin{figure}[h!]
    \centering
     \begin{subfigure}[b]{0.8\textwidth}
    \includegraphics[width=\textwidth]{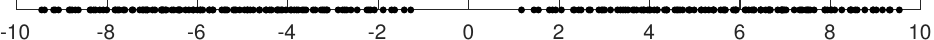}
         \caption{without preconditioner}
        \includegraphics[width=\textwidth]{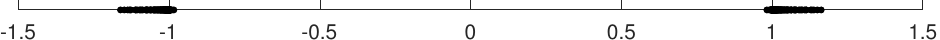}
         \caption{with $\cred{\widetilde{P}_n}$}
     \end{subfigure}
        \caption{Eigenvalues of $Y_n\cred{\widetilde{A}_n}$ with $\alpha_1=\alpha_2=1.1$ and $n=225$.}
        \label{fig:two_D_1_1_225}
\end{figure}

\begin{figure}[h!]
    \centering
     \begin{subfigure}[b]{0.8\textwidth}
    \includegraphics[width=\textwidth]{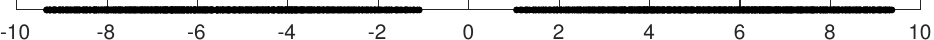}
         \caption{without preconditioner}
        \includegraphics[width=\textwidth]{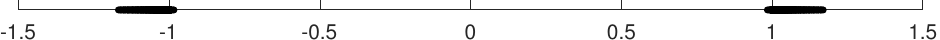}
         \caption{with $\cred{\widetilde{P}_n}$}
     \end{subfigure}
        \caption{Eigenvalues of $Y_n\cred{\widetilde{A}_n}$ with $\alpha_1=\alpha_2=1.1$ and $n=961$.}
        \label{fig:two_D_1_1_961}
\end{figure}

\begin{figure}[h!]
    \centering
     \begin{subfigure}[b]{0.8\textwidth}
    \includegraphics[width=\textwidth]{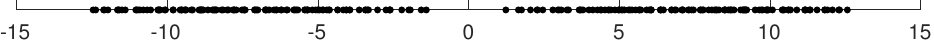}
         \caption{without preconditioner}
        \includegraphics[width=\textwidth]{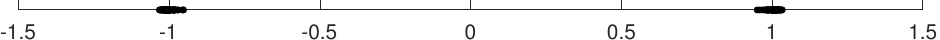}
         \caption{with $\cred{\widetilde{P}_n}$}
     \end{subfigure}
        \caption{Eigenvalues of $Y_n\cred{\widetilde{A}_n}$ with $\alpha_1=\alpha_2=1.5$ and $n=225$.}
        \label{fig:two_D_1_5_225}
\end{figure}

\begin{figure}[h!]
    \centering
     \begin{subfigure}[b]{0.8\textwidth}
    \includegraphics[width=\textwidth]{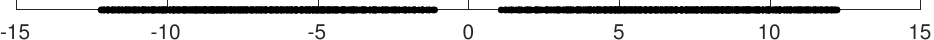}
         \caption{without preconditioner}
        \includegraphics[width=\textwidth]{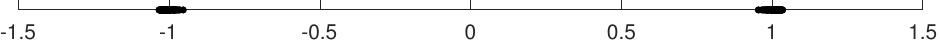}
         \caption{with $\cred{\widetilde{P}_n}$}
     \end{subfigure}
        \caption{Eigenvalues of $Y_n\cred{\widetilde{A}_n}$ with $\alpha_1=\alpha_2=1.5$ and $n=961$.}
        \label{fig:two_D_1_5_961}
\end{figure}

\begin{figure}[h!]
    \centering
     \begin{subfigure}[b]{0.8\textwidth}
    \includegraphics[width=\textwidth]{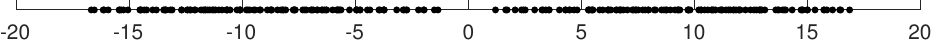}
         \caption{without preconditioner}
        \includegraphics[width=\textwidth]{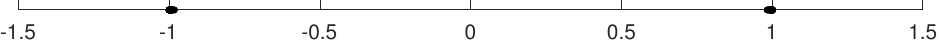}
         \caption{with $\cred{\widetilde{P}_n}$}
     \end{subfigure}
        \caption{Eigenvalues of $Y_n\cred{\widetilde{A}_n}$ with $\alpha_1=\alpha_2=1.9$ and $n=225$.}
        \label{fig:two_D_1_9_225}
\end{figure}

\begin{figure}[h!]
    \centering
     \begin{subfigure}[b]{0.8\textwidth}
    \includegraphics[width=\textwidth]{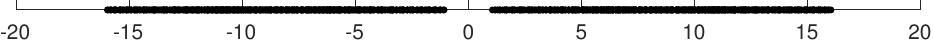}
         \caption{without preconditioner}
        \includegraphics[width=\textwidth]{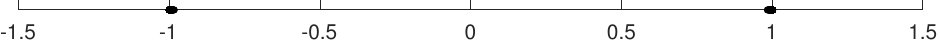}
         \caption{with $\cred{\widetilde{P}_n}$}
     \end{subfigure}
        \caption{Eigenvalues of $Y_n\cred{\widetilde{A}_n}$ with $\alpha_1=\alpha_2=1.9$ and $n=961$.}
        \label{fig:two_D_1_9_961}
\end{figure}

}
\end{example}

\cred{
\begin{example}\label{example:two_dim_second}
\rm{
In this example, we consider the two-dimensional fractional diffusion equation \eqref{eq:fde} with
$$
\begin{aligned}
% & u(x, y, t)=e^t x^2(2-x)^2 y^2(2-y)^2, \\
d_{1,+} &=3, \quad d_{1,-}=1, \quad d_{2,+} = 2,  \quad d_{2,-} =1, \quad \Omega=(0,2) \times(0,2), \quad T=1,\\
{f\left(x_1, x_2, t\right)}&=  e^t x_1^2(2-x_1)^2 x_2^2(2-x_2)^2 \\
&-e^t x_2^2\left(2-x_2\right)^2 \sum_{i=2}^4 \frac{\binom{2}{i-2} 2^{4-i} i!\left[d_{1,+} x_1^{i-\alpha_1}+d_{1,-}\left(2-x_1\right)^{i-\alpha_1}\right]}{\Gamma\left(i+1-\alpha_1\right)(-1)^{i-2}} \\
& -e^t x_1^2\left(2-x_1\right)^2 \sum_{i=2}^4 \frac{\binom{2}{i} 2^{4-i} i!\left[d_{2,+} x_2^{i-\alpha_2}+d_{2,-}\left(2-x_2\right)^{i-\alpha_2}\right]}{\Gamma\left(i+1-\alpha_2\right)(-1)^{i-2}}.
\end{aligned}\\
$$
The exaction is given by $ u(x_1, x_2, t)=e^t x_1^2(2-x_1)^2 x_2^2(2-x_2)^2$. Note that the weighted and shifted Gr\"unwald scheme is used. The stated CPU times, iteration counts and error metrics apply only to the first time step. Additionally, we set $n_1=n_2$ and $\tau=T /\left( n_1+1\right)$. We did not implement MINRES-$MG(A_R)$ and MINRES-$MG(A_M)$, due to their high cost, as observed in the previous example. The GMRES solver \cite{Lin_etc_2023} was not applied in this example because it was developed specifically for the first-order shifted Gr\"unwald scheme. Since the exaction solution is available for this example, we define the error as $\textrm{Err}=\|\textbf{u}-\widetilde{\textbf{u}}\|_{\infty}$, where $\textbf{u}$ and $\widetilde{\textbf{u}}$ denote the exact solution and the approximate solution, respectively, resulted from the preconditioned MINRES solver.\\

Table \ref{Example_2_Table} presents the iteration count, CPU time, and error for the MINRES solver using the proposed preconditioner and without any preconditioner. These results are compared across different orders of fractional derivatives $(\alpha_1,\alpha_2)$ and $n$. Similar to the previous example, the findings summarized in Table \ref{Example_2_Table} suggest that MINRES-\( P_n \) achieves iteration counts that are independent of the mesh size. This establishes it as the most efficient method compared in virtually all cases, except when $(\alpha_1,\alpha_2)=(1.1,1.1)$, where the unpreconditioned solver MINRES-\( I_n \) is more efficient in terms of CPU times. Nonetheless, MINRES-\( P_n \) appears to be effective for all cases, confirming its stability and robustness to parameter variations.
}
\end{example}
}

\begin{table}[H]
\cred{
\small \caption{{Performance of MINRES-$I_n$ and MINRES-$P_n$ for Example 2 with $d_{1,+}=3,~d_{1,-}=1,~d_{2,+}=2$, and $d_{2,-}=1$.}}
\label{Example_2_Table}
\begin{center}
\begin{tabular}{cc|ccc|ccc}
\hline
\multirow{2}{*}{$(\alpha_1,\alpha_2)$} & \multirow{2}{*}{$n$} & \multicolumn{3}{c|}{MINRES-$I_n$} & \multicolumn{3}{c}{MINRES-$P_n$} \\ \cline{3-8}
                                  &                        & Iter         & CPU    & Err   & Iter         & CPU         & Err     \\ \hline
\multirow{4}{*}{(1.1,1.1)}      
                                  & 261121                 &  15          &  0.31     &5.3e-6      &  11            &  0.37            &5.3e-6  \\
                                & 1046529                 & 15           &  1.40    &1.3e-6        &   9           &  1.21         &1.3e-6     \\ 
                                & 4190209               & 13           &  7.28    &3.3e-7        &   9           &  9.26        &3.4e-7      \\ 
                                & 16769025 
                                                 &  13           & 25.72     & 8.2e-8   &  9            &  34.55   & 9.1e-8          \\\hline
\multirow{4}{*}{(1.1,1.5)}     
                                  
                                   & 261121                &   $>$100           &  -   &-          &  13            & 0.42           &1.8e-5    \\
                                  &1046529                &   $>$100           &  -     &-        &   11           &  1.41          &4.8e-6   \\ 
                                   & 4190209               &   $>$100           &  -     &-        &   11           &  10.90          &1.2e-6      \\ & 16769025                  & $>$100           &  -   &-         &11&41.27  &3.3e-7    \\\hline
\multirow{4}{*}{(1.1,1.9)}       
                                  
                                   & 261121               &  $>$100           &  -    &-      &11              & 0.34           &5.4e-6     \\
                                  & 1046529                &   $>$100           &  -     &-     &  11            & 1.42           &1.4e-6   \\ 
                                  & 4190209               &   $>$100           &  -     &-     &  11            & 10.65           &3.8e-7 \\ & 16769025                  &  $>$100           &  -     &-       &11 &41.11 &9.9e-8     \\\hline
\multirow{4}{*}{(1.5,1.1)}       
                                  
                                  & 261121                &   $>$100           &  -    &-        & 11             &  0.37          &2.2e-5    \\
                                  &1046529                &    $>$100           &  -    &-        &  11            & 1.41           &5.8e-6     \\
                                  & 4190209               &    $>$100           &  -    &-        &  11            & 10.85           &1.5e-6    \\ & 16769025                  &  $>$100           &  -     &-      &   11           &  40.92         &3.9e-7  \\\hline
\multirow{4}{*}{(1.5,1.5)}      
                                 
                                  & 261121                &  $>$100           &  -    &-       &    12          &   0.38        &2.1e-5     \\
                                  & 1046529                &   $>$100           &  -    &-       &   11           &  1.42          &5.7e-6      \\ 
                                   & 4190209               &   $>$100           &  -    &-       &   11           &  10.91          &1.5e-6    \\  & 16769025                  &     $>$100           &  -     &-       &  11            & 40.68           &3.9e-7  \\\hline
\multirow{4}{*}{(1.5,1.9)}       
                                  
                                  & 261121                &   $>$100           &  -   &-        &  13                      &0.42   
                                  &2.1e-5\\
                                  &1046529                &  $>$100           &  -    &-       &   13          &  1.62          &5.7e-6   \\ 
                                   & 4190209               &  $>$100           &  -    &-       &   12          &  12.27          &1.5e-6  \\  & 16769025                 &   $>$100           &  -   &-        &   11                    &40.88  
                                  &3.9e-7 \\\hline
\multirow{4}{*}{(1.9,1.1)}      
                                  
                                   & 261121                &   $>$100           &  -   &-      &     9         &   0.30          &6.2e-6   \\
                                  & 1046529                &     $>$100           &  -   &-         &    9          &  1.18          &1.6e-6     \\
                                   & 4190209               &     $>$100           &  -   &-         &    9          &  9.17          &4.3e-7  \\ & 16769025                  &   $>$100           &  -   &-       &    9          &  34.99         &1.1e-7     \\\hline
\multirow{4}{*}{(1.9,1.5)}      
                                
                                  & 261121                &    $>$100           &  -   &-      &        11      &    0.43         &1.8e-5  \\
                                  & 1046529                &         $>$100           &  -   &-        &       11       & 1.41          &4.8e-6 \\ 
                                   & 4190209               &         $>$100           &  -   &-        &       11       & 10.79         &1.2e-6   \\  & 16769025                &   $>$100           &  -    &-        &    11          &   40.73       &3.3e-7      \\ \hline
\multirow{4}{*}{(1.9,1.9)}       
                             
                                   & 261121  &   $>$100    &  -    &- &9 &  0.29 &6.2e-6   \\    
                                  & 1046529                &    $>$100           &  -   &-          &            9 & 1.18         &1.6e-6    \\
                                   & 4190209               &    $>$100           &  -   &-          &            9 & 9.13         &4.3e-7  \\ & 16769025                  &   $>$100           &  -   &-        &        9      &  34.64          &1.1e-7   \\ \hline                               
\end{tabular}
\end{center}
}
\end{table}

\section{Conclusions}\label{sec:conclusions}
We have developed a novel approach for solving space R.-L. fractional diffusion equations, utilizing a MINRES method based on multilevel Tau preconditioners. Our approach not only accounts for the ideal \cred{symmetric part} preconditioner $A_R$ pioneered in \cite{Pestana2019} but also offers improvements in both theoretical and numerical aspects. Our analysis suggests that employing the preconditioned MINRES method can lead to convergence that is independent of the mesh size. To validate the effectiveness of our proposed preconditioning strategy, we have provided numerical examples that demonstrate its superior capability. As future work, for a symmetrized multilevel Toeplitz system with \( Y_n T_n[f] \), we will develop a practical and effective preconditioner based on the absolute value function \( |f| \) and Tau matrices. It is expected to be more versatile and can be used not only for solving the space fractional diffusion equations currently under consideration, but also for a general symmetrized multilevel Toeplitz system. \cred{Moreover, we intend to investigate the performance of the proposed preconditioner in scenarios involving non-constant coefficient cases within a general domain under the GMRES framework. This presents a particularly challenging scenario as symmetrization appears infeasible under these settings. Consequently, our goal is to further develop preconditioners based on the symmetric part preconditioner $A_R$ are capable of achieving optimal convergence. Also, future work will explore this symmetric part preconditioning approach for other more challenging PDE problems, in addition to the R.-L. fractional diffusion equations currently under consideration.}

\section*{Acknowledgments}
%The authors would like to thank the Editor and the anonymous referees for their careful reading and for their helpful and pertinent suggestions. 
The work of Sean Hon was supported in part by the Hong Kong RGC under grant 22300921 and a start-up grant from the Croucher Foundation.

\bibliographystyle{plain}

\end{document}